\documentclass[12pt]{amsart}
\usepackage{graphicx}
\usepackage[usenames]{color}
\usepackage{amsmath,amsfonts,amsthm,amssymb}
\usepackage{syntonly}
\usepackage[mathcal]{euscript}
\usepackage[colorlinks=true]{hyperref} 
\hypersetup{urlcolor=blue, citecolor=blue} 
\usepackage{color}

\usepackage{xypic} 
\textwidth=17. true cm
\textheight=23 true cm
\voffset= -2. true cm
\hoffset = -2.5 true cm
\parskip = \medskipamount

\def\HH{{\mathbb H}}

\def\QQ{{\mathbb Q}}
\def\RR{{\mathbb R}}

\def\ZZ{{\mathbb Z}}

\newtheorem{Def}{Definition}
\newtheorem{Lemma}{Lemma}
\newtheorem{Prop}{Proposition}
\newtheorem{Cor}{Corollary}
\newtheorem{Teo}{Theorem}

\newtheorem{Ex}{Example}

\newcommand{\lrar}{\longrightarrow}

\newcommand{\Hpp}{H^{\prime\prime}_{0}} 
\newcommand{\dt}{\partial_{t}} 

\begin{document}
	\title[Bilinear forms in the Jacobian module...]{	\small{
Bilinear forms in the Jacobian module\\ 
and binding
of $N$-spectral chains\\
of a hypersurface
with an isolated singularity}
}
	\author{Miguel Angel Dela-Rosa$^\dag$}
	\author{Xavier G\'omez-Mont$^\ddag$}
	\address{\text{$^\dag$} Divisi\'on Acad\'emica de Ciencias B\'asicas, CONACyT-UJAT\\
		Km 1, Carretera Cunduac\'an--Jalpa de M\'endez, Cunduac\'an, Tabasco, c.p. 86690, M\'exico}
	\email{$^\dag$madelarosaca@conacyt.mx}

\address{\text{$^\ddag$} Centro de Investigación en Matemáticas,  AP 402,  Guanajuato, Guanajuato, c.p. 36000, M\'exico} 
\email{$^\ddag$gmont@cimat.mx}	
	 
\subjclass[2010]{Primary 32S25, 32S35; Secondary 32S40, 14C30}
\keywords{Isolated Hypersurface Singularity, Grothendieck Duality, Cup Product, Higher Bilinear Forms, Binding
	of $N$-spectral Chains}
\date{\today}
\thanks{The first author was partially supported by CONACYT grant A1-S-47710. The second author was partially supported by CONACYT grant 286447.  
}

\maketitle
 
\begin{abstract} 
Using his deep and beautiful idea of cutting with a Hyperplane, Lefschetz explained how the homology groups of a projective smooth variety could be constructed from basic pieces, that he called primitive homology. This idea can be applied every time we have a vector space with a biliner  form (i.e. homology with cup product) and a $\pm$-symmetric nilpotent operator (i.e. cutting with a hyperplane). We will illustrate this in the context of Singularity Theory: A germ of an isolated singular point of a hypersurface defined by $f=0$. We begin with  the algebraic setting in the Jacobian (or Milnor)  Algebra of the singularity, with Grothendieck pairing as bilinear form and  multiplication by $f$ as a symmetric nilpotent operator. We continue in the topological setting of vanishing cohomology with  bilinear form induced from cup product and as nilpotent map the logarithm of the unipotent map of the monodromy as an anti-symmetric operator.
We then show how these 2 very different settings are tied up using the Brieskorn lattice as a D-module, on using results of Brieskorn (1970), A. Varchenko (1980s),  M. Saito (1989), and C. Hertling (1999, 2004, 2005), inducing a Polarized Mixed Hodge structure at the singularity (Steenbrink, 1976) and bringing the spectrum of the singularity as a deeper invariant than the eigenvalues of the Algebraic Monodromy.
In particular, we show how an $f$-Jordan chain is obtained from several
$N$-Jordan chains by gluing them in the Brieskorn lattice.
\end{abstract} 

\section{Introduction}

Given a germ of a holomorphic function  $f:(\mathbb{C}^{n+1},0)\longrightarrow(\mathbb{C},0)$ with an isolated singularity one can associate to it the following objects:

1) The {\bf algebraic object,} consists of 
the Jacobian Algebra of  $f$
$$A_{f}:=\frac{\mathcal{O}_{\mathbb{C}^{n+1},0}}{J_{f}} $$
where
 $z_{0},\ldots,z_{n}$   are coordinates on $\mathbb{C}^{n+1}$,
$f_{j}=\partial f/\partial z_{j}$, $\mathcal{O}_{\mathbb{C}^{n+1},0}$  is the ring of germs of holomorphic functions at $0\in\mathbb{C}^{n+1}$ and 
 $J_{f}:=(f_{0},f_{1},\ldots,f_{n})\subset\mathcal{O}_{\mathbb{C}^{n+1},0}$ is the
   Jacobian ideal generated by the partial derivatives of $f$.  The dimension as a $\mathbb{C}$-vector space of $A_f$ is
 the   Milnor number $\mu:=dim_{\mathbb{C}}A_{f}<\infty$ (\cite{milnor}).
 $A_f$ carries naturally a non-degenerate bilinear form, the 
Grothendieck pairing (\cite{grifharis} p. 649) $$res_{f}:A_f \times A_f \to \mathbb{C},$$ 
$$
 res_f([h_{1}\, ],[h_{2}\, ])  = \left(\frac{1}{2 \pi i}\right)^{n+1}\int_\Gamma
\frac{h_1h_2}{f_0\cdots f_n}\,dz_0\wedge \cdots\wedge dz_n \hskip 5mm,\hskip 5mm \Gamma:=\{|f_0|=\cdots=|f_n|=\varepsilon\},\ \ 0<\varepsilon<<1.
$$
The nilpotent operator $M_f:A_f \lrar A_f$ given by multiplication with $f$  is $res_{f}$-symmetric. The index of nilpotency $m$ of $M_f$ is at most $n$ (i.e. $M_f^{m+1}=0 $ but 
$M_f^{m}\neq 0: A_f\rightarrow A_f$
 \cite{BrSkoda}). 
We can obtain from this data a flag of ideals in $A_f$:
$$A_f = W_{-m}(A_f,M_f)\supset W_{-m+1}(A_f,M_f)  \supset \cdots\supset    W_{m}(A_f,M_f) \supset  W_{m+1}(A_f,M_f) = 0 $$
 with extreme terms at both ends
$$ W_{-m+1}(A_f,M_f):=  Ann_{A_f}(f^m) \supset W_{m}(A_f,M_f) := (f^m) $$
and proceeding to construct the flag from induction
with
$$M_f^{m-1}:\frac{Ann_{A_f}(f^m)}{(f^m)}\longrightarrow
\frac{Ann_{A_f}(f^m)}{(f^m)}$$
which now has index of nilpotency $m-1$.

One then constructs another vector space of dimension $\mu$ which is obtained by ``chopping up" $A_f$ according to the filtration $W_*(A_f,M_f)$, and is called the graded module associated to the filtration:

$$Gr_W^*(A_f,M_f) := \oplus_{j=-m}^m Gr_W^j(A_f,M_f):=
\oplus_{j=-m}^m \frac{ W_{j}(A_f,M_f)}{ W_{j+1}(A_f,M_f)}.$$
Grothendieck residue will induce non-degenerate bilinear forms on $Gr_W^*(A_f,M_f)$ 
of type $res_f(M_f^j*,*)$ with nice orthogonality properties, and we will obtain that each of the graded pieces is made up of ``primitive" ones. It is in this graded module $Gr_W^*(A_f,M_f)$ that we can see the Lefschetz decomposition most clearly in this algebraic setting. Of course, it takes some reflection to understand this Lefschetz decomposition in the graded object with its bilinear form and its relation to the Jacobian Algebra $A_f$ and Grothendieck Residue pairing.

2) The {\bf topological object} consists of the $C^\infty$ locally trivial fibre bundle $\{X_t:=f^{-1}(t)\}_{t\in D^*}$ over a punctured disc $D^* \subset \mathbb{C}$ with fibre the ``Milnor Fibre", which is  a smooth $2n$ compact orientable differentiable manifold with boundary which is homotopically a bouquet of $\mu$ $n$-dimensional spheres (Milnor\cite{milnor}). 
The Geometric Monodromy map is the isotopy class
of the automorphism of the Milnor Fibre obtain
from lifting a loop around $0\in D^*$ to a diffeomorphism of the fibre of the locally trivial fibre bundle being the identity on the boundary of the fibre.
The $n^{th}$ dimensional cohomology groups with integer, rational or complex coefficients $\{H^n(X_t,K)\}_{t\in D^*}$, called the ``vanishing cohomology"  groups,  form a $K$-vector bundle over $D^*$ with a flat connection (the ``Gauss-Manin connection" (\cite{Bries}) whose Algebraic Monodromy map                                          
$
M_K:H^n(X_t,K) \longrightarrow H^n(X_t,K)  $
 is a fundamental linear invariant of the singularity. The Monodromy Theorem asserts that $M_K$ has roots of unity as eigenvalues, and that the size of the Jordan blocks of
 $M_K$ is bounded by $n+1$.
 For a sufficiently large $s$, $M_\ZZ^s$ will be a unipotent integer matrix and the $\mu\times\mu$ matrix
 
$$N:=\frac{1}{s}Log( M_\ZZ^s) =\frac{1}{s}
\sum_{j=1}^{n+1}\frac{1}{j} 
( M_\ZZ^s -Id_\mu)^j$$
 with rational coefficients satisfies that $e^N$ is the unipotent part of the factorization of $M=M_{s}M_u$ as a semisimple times a unipotent matrix.
 $N$ is a a nilpontent endomorphism of $H^n(X_t,\mathbb{C})$ that gives rise to a decomposition
 of the cohomology groups $H^n(X_t,\mathbb{C})$ `a la Lefschetz'.

  The bilinear form $Q$ in vanishing cohomology  comes from cup product on $V_t$. It is a $\pm$-symmetric bilinear form, and $N$ is antisymmetric with respect to it.
 All this allows us to give a decomposition of vanishing cohomology as made up of many pieces of basic primitive forms, seen as vector spaces with bilinear forms   $Q(N^j\bullet,J\bullet)$,  where $J$ is an involution whose eigenspaces are described
using the mixed Hodge structure of the singularity . Arithmetic appears here, since the bundle and the cup product are actually defined with integer coefficients  and the bilinear form has positive definite properties: the ``Riemann-Hodge bilinear relations" (\cite{grifharis}). All this gets codified into the ``Polarized Mixed Hodge Structure of Vanishing Cohomology".

   3) The   {\bf differential object} consists of the following: Denote as $H^n(X_\infty,\mathbb{C})$ the canonical cohomology fiber which is identified with the holomorphic sections associated to the vector bundle $$\underline{H}^{n}=\bigcup_{t\in D^{*}} H^{n}(X_{t},\mathbb{C}):=\{H^n(X_t,\mathbb{C})\}_{t\in D^*}.$$ Let $\{H_{e^{-2 \pi i \beta_j}}\}_{\beta_j\in\langle-1,0]\cap \QQ}$ be the set of generalized eigenspaces with respect to the semisimple part of the monodromy map $M_s$. Hence the elements on each $H_{e^{-2\pi i \beta_j}}$  become flat sections with the meromorphic Gauss-Manin connection. Consider the space
 $$\HH    :=\bigoplus_{k=0,\ldots,n}H  t^k, \hskip 1cm H:=  \bigoplus_{\beta_j \in \langle-1,0]\cap \QQ} H_{e^{-2 \pi i \beta_j}}t^{\beta_j-\frac{1}{2 \pi i}N}.$$
 The $V$-filtration in $\HH$ is defined by
 $$ V^\gamma := \bigoplus_{\gamma \leq \beta_j +k  } H_{e^{-2 \pi i \beta_j}}t^{\beta_j+k-\frac{1}{2 \pi i}N},$$
   the weight $W$-filtration
is formed by the topological $W_N$-filtration (that is, the weight filtration asociated to the nilpotent restriction map $N:H_{e^{-2\pi i \beta_j}}\rightarrow H_{e^{-2\pi i \beta_j}}$) in each summand, the $VW$ bi-filtration is a lexicographic 
combination of the two, with a double index, first the $V$-filtration followed by the $W$-filtration. Introduce the $\mathbb{C}$-linear map
$$\partial_t^{-1}:\HH \longrightarrow \HH,$$
which is defined on its summands by the relation (essentially, this map is the inverse of the local Gauss-Manin connection):
$$
\partial_t^{-1}(A_{\beta_j+k}t^{\beta_j+k-\frac{1}{2 \pi i}N}):=
 ({(\beta_j+k+1)I-\frac{1}{2 \pi i}N})^{-1}A_{\beta_j+k}t^{\beta_j+k+1-\frac{1}{2 \pi i}N})$$
 for $k=0,\ldots,n-1$ and $0$ for $k=n$.
The nilpotent map
is     $\partial_t^{-1}N$ and
 the bilinear form $<\ ,\ >_{\HH}$ is the
   orthogonal direct sum
 $$
 \HH = \bigoplus_{k=0,\ldots,(n-1)/2}^\perp[H  t^k\oplus H t^{n-k}]  \ \ \Bigl( \bigoplus^\perp H  t^{n/2}\ \  \hbox{for $n$ even} \Bigr) $$
 where in each summand the bilinear form has the form
 $$\langle vt^k,wt^{n-k} \rangle_{\HH}:=\displaystyle\frac{(-1)^{1+r}}{(2\pi i)^{n+1+r}} Q(\partial_t^{k}v t^{k}, (-1)^{n-k} \partial^{n-k}_{t}wt^{n-k}), \hskip 1cm v\in H\ ,\ w\in H\hskip 1cm r=-1  \hbox{ or }  0,$$
where $\partial_t^{k}v t^{k}$ and $ \partial^{n-k}_{t}wt^{n-k}$ are flat sections and $Q$ is the cup product as above.
The structure in $\HH$ then is topological (coming from ${H}$, $M$,  $N$ and $Q$)
and analytic due to $\partial_t$,
which can be interpreted as the Gauss-Manin connection on the sections of the bundle of $n^{th}$ primitive cohomology.

Consider now  the (Poincar\'e-Gelfand-Leray) residue (\cite{grifharis}). Namely, given a germ $\omega\in \Omega^{n+1}$ of an $(n+1)$-differential
form at $0\in \mathbb{C}^{n+1}$, its  residue $s(\omega)$ is a section of the cohomology bundle $\underline{H}$ over the punctured disk. Expand it in a
Laurent type expansion (\cite{malgrange}) and consider the first $(n+1)$-order terms in the expansion $s_n:\Omega^{n+1} \rightarrow \HH$, with image
$\HH^{\prime\prime}$. Introduce the subspace $\HH^\prime :=s_n(df \wedge \Omega^n)$ consisting of those expansions that can be realized by sections of the cohomology bundle $\underline{H}$ of the form $[\eta|_{X_t}]$, for $\eta \in \Omega^n$, where we are taking the de-Rham class in cohomology
$H^n(X_t,\mathbb{C})$ of the closed $n$-form $\eta|_{X_t}$ . The connection between the algebraic and the analytic then comes from the theorem
of Varchenko (\cite{varch1}) that asserts that the annihilator of the restriction of the bilinear form $<\ , \ >_{\HH}$ to $\HH^{\prime\prime}$
is $\HH^\prime$ and that the map $s_n$ induces  an isomorphism of the non-degenerate bilinear spaces
$$s_n:\Omega_f:= \frac{\Omega^{n+1}}{df\wedge\Omega^n} \longrightarrow \frac{\HH^{\prime\prime}}{\HH^\prime}$$
where we have put in the domain the bilinear form $res_f$ and in the image the induced non-degenerate bilinear form from $<\ , \ >_{\HH}$.

One induces the $V$-filtration on $ \HH^{\prime\prime}/\HH^\prime$ and then on $\Omega_f$ via the isomorphism $s_n$. Namely
$s_n(\omega)\in V^\gamma(\HH^{\prime\prime}/\HH^\prime)$ if there exists an $\eta \in \Omega^n$ with $s_n(\omega+df\wedge\eta)\in V^\gamma(\HH)$.
The principal term of the form $\omega \in \Omega^{n+1}$ is the $gr(V)$-smallest non-zero coefficient of $s_n(\omega)$.
The form $\omega  \in \Omega^{n+1}$ is original if the principal term of $s_n(\omega)$ cannot be $gr(VW)$-diminished  by adding an element  of the form $s_n(df\wedge \eta)$, for $\eta\in\Omega^n$.

Multiplication by $f$ in $\Omega^{n+1}$ satisfies
$s_n (f\omega)=ts_n (\omega)$, and hence  $s_n$ preserves the $V(\Omega_f)$-filtration inducing a degree 1 map in the associated graded module:
$$gr(f):Gr_V^{*}\Omega_f\longrightarrow Gr_V^{*+1} \Omega_f.$$

Another   theorem of Varchenko (\cite{varch3}) asserts that if we consider only the principal terms by considering the graded map $gr(s_n)$
when cutting with the $V$-filtration, then multiplication by $f$ corresponds to applying $\frac{-1}{2 \pi i }N$. This means that multiplication
by $f$ in the Jacobian module contains   more information than applying $\frac{-1}{2\pi i}N$ to its principal term, since it is acting on $\Omega_f$ and not only
on its principal term $Gr_V^{*}\Omega_f$.

This structure was clarified by M. Saito \cite{MSaito,MSaito2} and C. Hertling \cite{hertl3,hertl4} by choosing a
convenient basis  of $\Omega_f$ by original forms $\omega_1,\ldots,\omega_\mu \in \Omega^{n+1}$, which is adapted to multiplication by $f$. This basis  puts the bilinear
form $res_f$ in a canonical form and the principal part of  $s_n(\omega_j)$ is $A_jt^{\alpha_j-\frac{1}{2 \pi i}N}$ $\in  H_{e^{-2 \pi i \alpha_j}}t^{\alpha_j-\frac{1}{2 \pi i}N}$ which is adapted to multiplication by
$N$ and puts the bilinear form $S$ into canonical form (\cite[Prop. 5.1 and 5.4]{hertl3}). The rational numbers
$$-1 < \alpha_1 \leq \ldots\leq\alpha_\mu\leq n$$
are the spectral values. There is an increasing function $\nu_N:\{1,\ldots,\mu\} \longrightarrow \{1,\ldots,\mu+1\} $
and the $N$-adaptedness condition is $$  A_{\nu_N(j)}t^{\alpha_{\nu_N(j)}-\frac{1}{2 \pi i}N} =\partial_t^{-1} N(A_j)t^{\alpha_{ j }+1-\frac{1}{2 \pi i}N}\ \   \hbox{  with } A_{\mu+1}:=0.$$
The orbits of $\nu_N$ split the spectral values into $N$-chains of spectral values:
$$\{\alpha_1,\alpha_{\nu_N(1)}=\alpha_1+1,\alpha_{\nu_N^2(1)}=\alpha_1+2, \ldots,\alpha_{\nu_N^{\ell_1}(1)}=\alpha_1+\ell_1\},$$
$$\{ \alpha_2,\alpha_{\nu_N(2)}=\alpha_2+1,\alpha_{\nu_N^2(2)}=\alpha_2+2, \ldots\,\alpha_{\nu_N^{\ell_2}(2)}=\alpha_2+\ell_2\},\ldots$$

 The adaptedness condition for $f$ means that, beginning with the original 1-form $\omega_1$, its principal term is $A_1t^{\alpha_1-\frac{1}{2\pi i}N}$. Now $f\omega_1$ is not original, but its original representative is $f\omega_1-(\alpha_1+1)df\wedge \eta_1$, where $d\eta_1=\omega_1$ with principal term $[(\alpha_1+1)I-\frac{1}{2 \pi i}N]^{-1}NA_1$, etc.
  The binding of the spectral $N$-chains is produced from an argument as the following: taking the last basis element corresponding to the first $N$-chain, apply $f$ to it and substract $
 (\alpha_{\nu_N^{\ell_1}(1)}+1)df\wedge \eta_1$ with $\omega_{\nu_N^{\ell_1}(1)}=d\eta_{\ell_1}$:
 	$$f\omega_{\nu_N^{\ell_1}(1)}-(\alpha_{\nu_N^{\ell_1}(1)}+1)df\wedge \eta_{\ell_1}.$$
 	Since this element is in the end of an $N$-chain, the principal term that one would expect vanishes (i.e. $N^{\ell_{1}+1}A_1=0$), and so its
 	principal term corresponds to a spectrum point bigger than $\alpha_{\nu_N^{\ell_1}(1)}+1$. We continue the $N$-chain from this point as before. One has binded in this way the 2 $N$-chains together and one continues to obtain in this manner to build  the  $f$-chains.

   One now proceeds to analyse the bilinear forms $S$ and $res_f( \ \ ,\ \ )$ in Saito's basis. There is a function $\kappa:\{1,\ldots,\mu\}\longrightarrow\{1,\ldots,\mu\}$ which is $\kappa(j)=\mu+1-j$ except for some values where $\kappa(j)=j$, with $\alpha_{j}=(n-1)/2$. We have (\cite[Prop. 5.1 and Prop. 4.4]{hertl3})
$$S(L_jA_j,L_\ell A_\ell)=(-1)^{\ell+1+r}\delta_{\kappa(j),\ell}(2\pi i)^{n+1+r}\ \ ,\ \ \hskip 1cm r=-1 \hbox{ or } 0\; \ \ , \ \ res_{f}([\omega_j],[\omega_\ell])=\delta_{\kappa(j),\ell}.$$

We proceed to state the main results of this paper. Indeed, for a germ of a holomorphic function  $f:(\mathbb{C}^{n+1},0)\lrar(\mathbb{C},0)$ with an isolated critical point at $0\in \mathbb{C}^{n+1}$ we will show:
	\begin{itemize}
		\item[1)] Theorem \ref{thm:firstpartthm} which states that the bilinear forms $res_f(f^j\ ,\ )$
in the Jacobian module obtained by Grothendieck dulity and multiplication by $f^j$ in one factor can be expressed as a sum of the principal topological bilinear form $Q(N^j\ ,J\ )$ and
 weaker bilinear forms $Q_j$; being  $J$ an involution
 whose eigenspaces are expressed in terms of the Mixed Hodge structure of the Jacobian module.
 
 \item[2)] Corollary \ref{cor:mainthm} from which we obtain that the nature of the bilinear forms $Q_j$  arises from the bindings of the $N$-chains into the $f$-chains. 
\end{itemize}

The signatures of these bilinear forms enter as normalizing constants of a formula for computing indices of vector fields tangent to a hypersurface,
when working over the real numbers (\cite{Giraldo-GM}).

We  will use the simplified framework expounded in Hertling (\cite{hertl3}, \cite{hertl2})  of  work by Varchenko (\cite{varch1}, \cite{varch2}), K. and
M. Saito (\cite{Ksaito}, \cite{MSaito}), Scherk and Steenbrink (\cite{SherkSteen}). Recent related material are van Straten (\cite{duco}) and M. Saito (\cite{MSaito2}).

\section{Weight filtration on the Jacobian Algebra $A_f$}

\subsection{Choosing a basis for $A_f$ adapted to the Jordan block structure of $M_f$ }\label{sec:AfMf}

Let $f:(\mathbb{C}^{n+1},0)\longrightarrow(\mathbb{C},0)$ be a germ of a holomorphic function with an isolated singularity at $0$, $z_{0},\ldots,z_{n}$   coordinates on $\mathbb{C}^{n+1}$,
$f_{j}=\partial f/\partial z_{j}$ and $\mathcal{O}_{\mathbb{C}^{n+1},0}$  the ring of germs of holomorphic functions at $0\in\mathbb{C}^{n+1}$.
Denote the Jacobian ideal $J_{f}:=(f_{0},f_{1},\ldots,f_{n})\subset\mathcal{O}_{\mathbb{C}^{n+1},0}$ of $f$
  and
  the Jacobian Algebra
$$A_{f}:=\frac{\mathcal{O}_{\mathbb{C}^{n+1},0}}{J_{f}} $$
  with   Milnor number $\mu:=dim_{\mathbb{C}}A_{f}<\infty$.
Let
\begin{equation}
\label{eqn:multiplicationbyf}
M_f:A_{f}\lrar A_{f},\hskip .5cm[g ]\lrar[ f g ] 
\end{equation}
be the  operator defined by multiplication with $f$, where $[g]$ denotes the class of $g \in \mathcal{O}_{\mathbb{C}^{n+1},0}$ in $A_f$.
Multiplication by $f$ is  a symmetric operator with respect to the Algebra structure of $A_f$:
$$ [M_f(a)][b] = [a][M_f(b)]\hskip 1cm \Longleftrightarrow \hskip 1cm (fa)b=a(fb).$$
 $M_f$ is
nilpotent with index of nilpotency $m_{0}\leq n$ (i.e. $M_f^{m_0}\neq 0 \  ,\   M_f^{m_0+1}=0$, \cite{BrSkoda}).

We will choose an ordered basis (as a $\mathbb{C}$-vector space) of $A_f$ adapted to $M_f$: Choose $g_1 \in  A_f   -  Ann_{A_f }(f^{m_0}) $
 and the basis begins with 
 $$g_1,g_2:=fg_1,\ldots, g_{m_0+1}:=f^{m_0}g_1.$$
Imagine a
horizontal chain of length $2m_0$ with $m_0+1$ spheres  located at distance 2 in the chain
beginning with the extremes. Put on each of these spheres the element of the above partial basis  beginning on the left with 
$g_1,fg_1,f^2g_1,\ldots$
The mapping $M_f$ is a movement of the chain a step to the right of length 2 (each sphere moves to the next sphere to the right on the chain),
 and the extreme right sphere disappears under $M_f$
($f^{m_0+1}g_1=0$). We will call this a chain of size $m_0+1$, the number of basis vectors (or spheres) in the chain.
\begin{center}
    \centering 
    \includegraphics[width=.7\textwidth]{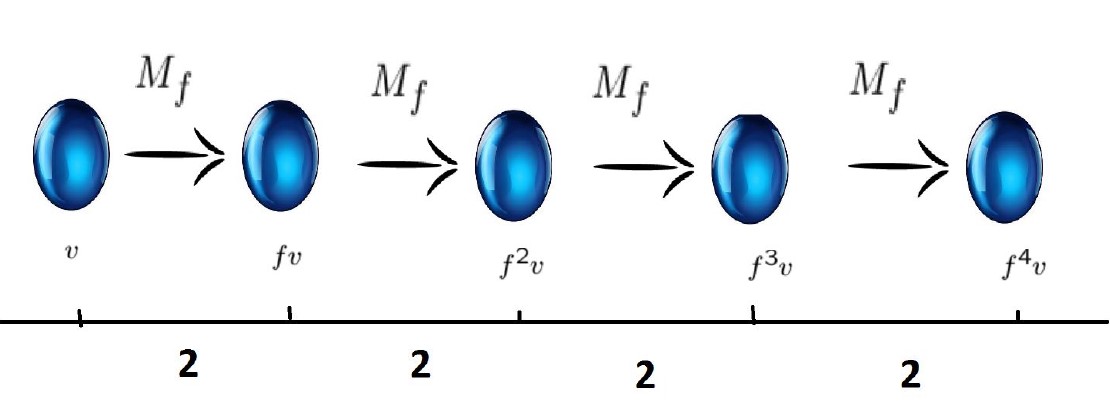}
\centerline{Fig. 1. Representation of the largest $M_f$-Jordan chain in $A_f$.}
    \label{fig1} 
\end{center}

Continue by chosing $g_{m_0+2} \in Ann_{A_f }(f^{m_1}) -  Ann_{A_f }(f^{m_1-1}) $,
with largest possible $m_1\leq m_0+1$ so that it is independent of    the already chosen basis elements $g_1,\ldots, g_{m_0+1}$,
so as to have the new partial basis $$g_1,\ldots,g_{m_0+1},g_{m_0+2},\ldots,g_{m_0+m_1+2}:=f^{m_1}g_{m_0+2},$$ etc.
Continue in this way we obtain a basis $\{g_k\}$, that we fix.
 Denote by $\ell_j$ the number of  chains    of size $j$ obtained. The numbers $(\ell_{m_0+1},\ldots,\ell_1)$  determine
  the Jordan-block type of $M_f:A_f \rightarrow A_f$, and for the Milnor number we have
$
 \label{eqn(a)}
 \mu = (m_0+1)\ell_{m_0+1} +\cdots +2\ell_2+\ell_1, 
 $
 since there are $\ell_1$ chains of size 1, $\ell_2$ of size 2, etc.
 
 Organize all the chains in such a way that the center of each chain is located over the point 0 and the
 negative numbers are to the left, as well as
 by size:  smaller   chains go on top. In this way we obtain a   pyramid like structure, which
 by definition is symmetric with respect to the involution on the vertical line over 0.
\begin{center}
    \centering 
    \includegraphics[width=.4\textwidth]{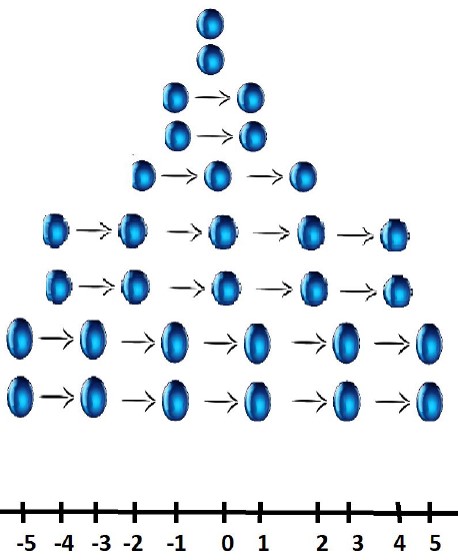}
    \centerline{Fig. 2.   Jordan block decomposition of $A_f$  of Jordan type $(2,2,0,1,2,2)$.}
    \label{fig2}
\end{center}

We define the $m$ platform of the pyramid as formed by those basis elements which are part of a Jordan chain
  of size $m$  (so $m$-spheres, $m-1$ arrows, length $2(m-1)$)
and the heigth of the $m$-platform is $\ell_m$, the number of Jordan chains of size $m$.
In this image we are developing  the $\ell_m$ Jordan blocks of size $m$ form the $m$-platform.
Imagine this platform as made up of $m-1$ blocks, each one of horizontal length 2 and
vertical height $\ell_m$ where the spheres constituting the platform are vertically evenly distributed
and horizontally are separated at distance 2.
The only different platform is the 1-platform, which looks more like an anthena, since its horizontal length is 0,
and it has vertical length $\ell_1$.
\begin{center}
    \centering 
    \includegraphics[width=.35\linewidth]{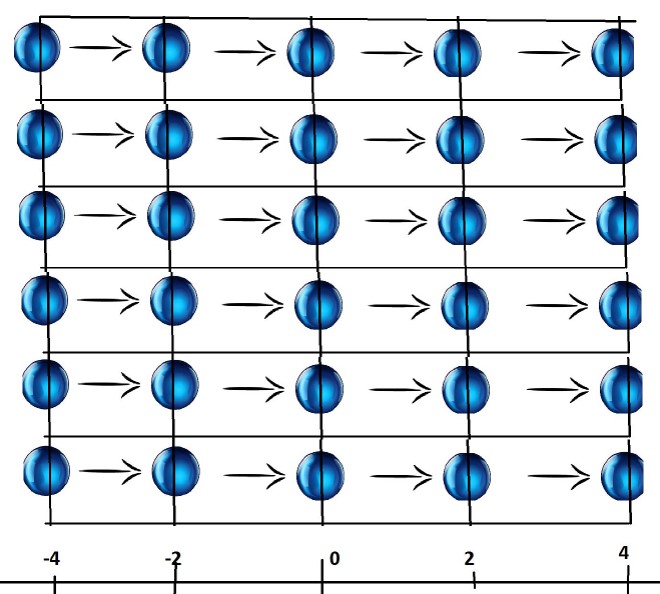}
   \centerline{Fig. 3. A 5-platform consisting of 4 blocks of height 6}
    \label{fig3} 
\end{center}

 The weight $w(g_j)$ of a basis element $g_j$ who is a member of a chain of size $m$ 
 and occupies the $k^{th}$ place in the chain, $k=1,\ldots,m$, is defined as $-m+2k-1$.
 It is the value of the projection of its sphere to the segment $[-m_0+1,m_0-1]$ at the base of the pyramid
 in the above construction. Its weight is $0$ if it is in the middle of a chain of size $m$ and it is $-m+1$ (or $m-1$) if it is on the
 extreme right (or left) of a chain of size $m$. The weight $w(g)$ of a general element   
$g=\sum_{i=1}^\mu a_ig_i \in A_f $, is
$$ w(g):=min\{w(g_i)\ / \ a_i\neq 0\}.$$

Since each block of the pyramid has length 2, and the platform on top has 1 chain less,
then this length 2 is divided into 2 segments of length 1 on each side of the platform.
So the blocks are not just `piled up', but they have a weaving structure:
\begin{center}
    \centering 
    \includegraphics[width=.4\textwidth]{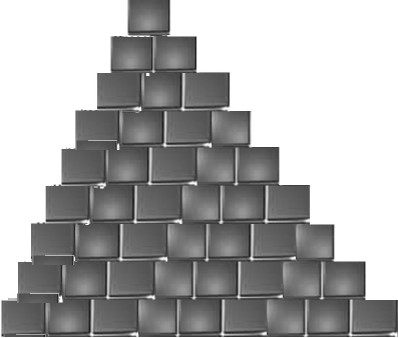}
     \centerline{Fig. 4.   Weaving Structure of the Pyramid of $(A_f,M_f)$.}
    \label{fig4}
\end{center}

 The $j^{th}$-column of the pyramid consists of those basis elements with weight $j$.
The height of the columns are, beginning on the left (or on the right):
$$\ell_{m_0}, \ell_{m_0-1},\ell_{m_0}+\ell_{m_0-2}, \ell_{m_0-1}+\ell_{m_0-3},\ldots,
\ell_{m_0} + \ell_{m_0-2} + \cdots +\ell_{m_0- 2k},\cdots
 ,\ell_{m_0 } + \ell_{m_0-2} , \ell_{m_0-1 }, \ell_{m_0 } $$
since the columns at distance 2 to the right and left of a given column consists of adding or deleting an additional piece of the column,
corresponding to the spheres on its uppermost platform,
as can be clearly seen by drawing the pyramid.
\begin{center}
    \centering 
    \includegraphics[width=.6\textwidth]{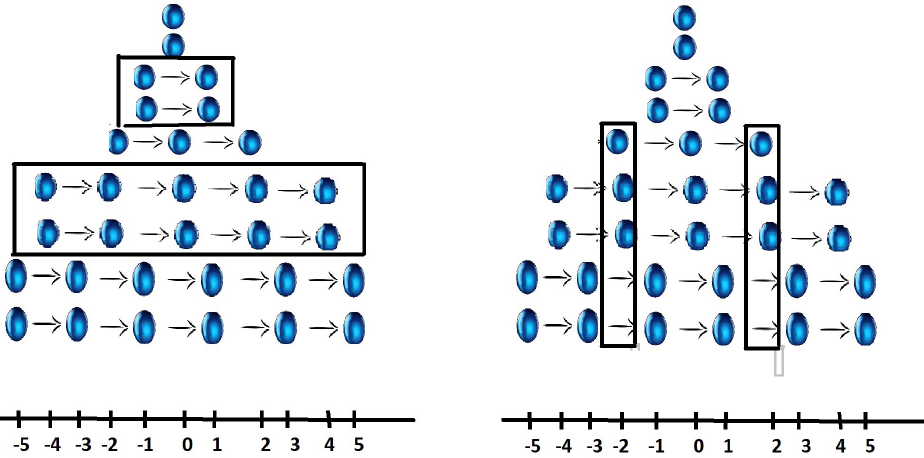}
 \centerline{Fig. 5.   Platform and Column Structure of the Pyramid of $(A_f,M_f)$.}
    \label{fig5}
\end{center}

 We reorder  the above basis $g_1,\ldots,g_\mu$ of $A_f$ to obtain a basis $\omega_1,\ldots,\omega_\mu$:
Begin the ordering with the basis elements on the columns   to the left, and on  each
column we begin the ordering with the elements in the blocks of the lower part of the column and moving up till the top block of that column,
before proceeding to the next column.
\begin{center}
    \centering 
    \includegraphics[width=.4\textwidth]{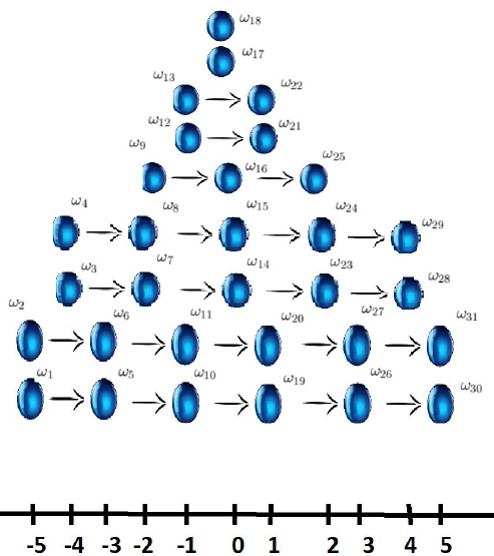}
\centerline{Fig. 6. Ordering the basis of $A_f$}
    \label{fig6}
\end{center}

We remember the collection of basis elements that appear in each       of the columns
\begin{align}\label{eqn:baseAf_omegas}
\{\omega_1,\ldots,\omega_\mu\}:=\{\{\omega_1,\ldots,\omega_{\ell_{m_0}}\}, \{  \omega_{\ell_{m_0}+1} ,\ldots,
\omega_{\ell_{m_0}+\ell_{m_1}}\},\\
 \{f\omega_1,\ldots,f^{}\omega_{\ell_{m_0}} ,   \omega_{2\ell_{m_0}+ \ell_{m_1}+1},\cdots,
\omega_{2\ell_{m_0}+\ell_{m_1}+\ell_{m_2}}\} , \ldots  \},\nonumber 
\end{align}
 forming $2m_0+1$ blocks,   that we number by the integer points in $[-m_0,m_0]$.
The above grouping of the basis will give decompositions of  endomorphisms $M_f$ of $A_f$ in the form of
block matrices.

Define the strictly increasing function 
\begin{equation}\label{eqn:aplicactionnu_omegas_base}
\nu_f:\{1,\ldots,\mu\} \rightarrow \{1,\ldots,\mu,\mu+1\}
\end{equation}

  by the condition $M_f(h_\ell) = \omega_{\nu(\ell)}$, with $\omega_{\mu+1}:=0$. This function
   codifies, in the above ordering of the basis, the effect of multipication by $f$: The matrix expression of $M_f$ in this basis is $(\delta_{j,\nu_f(j)})$.

\subsection{The Weight Filtration of the Jacobian Algebra}

This structure on the Jacobian Algebra appears due to the existence of a
canonical filtration $W(f)$ of $A_f$ induced by $M_f$, called the weight filtration (\cite{G-Sch}, \cite{schmid}):
\begin{equation}
\label{W}
A_f \supset
W_{-m_0}(M_f)\supset W_{ -m_0+1}(M_f)\supset \cdots \supset W_{0}(M_f)\supset \cdots\supset W_{m_0-1}(M_f)\supset W_{m_0}(M_f) \supset 0 .
\end{equation}

\begin{Def} The   element $W_j(M_f)$ of the weight filtration $W(M_f)$ of $A_f$ is defined as the vector space spanned by those basis elements
whose weight is greater than or equal to $j$, i.e. those which are
located to the right of $j$, in the above pyramid like structure of $A_f$.
\end{Def}

Note that the ordering  of the basis is constructed in such a way
that certain increasing segments of them $g_{k_1},\ldots,g_{k_2}$ generate
transversals to $W_j(M_f)$ in $W_{j-1}(M_f)$.

\begin{Lemma}\label{lemma:weigthbasisMf} 
	The weight fitration  is independent of the chosen basis.
\end{Lemma}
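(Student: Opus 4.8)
The plan is to deduce the Lemma from the uniqueness of the weight (monodromy) filtration attached to a nilpotent endomorphism, a characterization that makes no reference to any basis. Recall (\cite{schmid}, \cite{G-Sch}) that if $N$ is a nilpotent endomorphism of a finite-dimensional vector space with $N^{m_0}\neq 0$ and $N^{m_0+1}=0$, then there is a \emph{unique} decreasing filtration indexed by $-m_0\leq j\leq m_0$, with the conventions $W_{-m_0}=A_f$ and $W_{m_0+1}=0$, satisfying for all $j\geq 0$
$$ N\bigl(W_j\bigr)\subseteq W_{j+2}\qquad\text{and}\qquad N^{\,j}:Gr^{-j}_W\ \lrar\ Gr^{\,j}_W\ \text{ an isomorphism,} $$
where $Gr^{\,j}_W:=W_j/W_{j+1}$. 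Thus it suffices to show that the filtration $W(M_f)$ produced from our chosen adapted basis satisfies these two conditions with $N=M_f$; uniqueness then forces $W(M_f)$ to coincide with the intrinsic object determined by $M_f$ alone, which is exactly the asserted independence.

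Both conditions are read off from the Jordan-chain (pyramid) structure, using that in the adapted basis $M_f$ carries each basis element to another basis element or to $0$ (its matrix is $(\delta_{j,\nu_f(j)})$). First, a basis element $g$ occupying the $k$-th place of a chain of size $m$ has weight $w(g)=-m+2k-1$, and $M_f g$ is either $0$ or the element in place $k+1$, whose weight is $w(g)+2$; since $W_j(M_f)$ is spanned by the basis elements of weight $\geq j$, linearity gives $M_f\bigl(W_j(M_f)\bigr)\subseteq W_{j+2}(M_f)$, and moreover $Gr^{\,j}_W$ is canonically the $j$-th column of the pyramid. For the second condition, fix $j\geq 0$: the basis elements of weight $-j$ are exactly the place-$p$ elements, $p=(m-j+1)/2$, of the chains of size $m\geq j+1$ with $m\equiv j+1\ (\mathrm{mod}\ 2)$, and applying $M_f^{\,j}$ moves each of them $j$ steps to the right, landing on the place-$q$ element with $q=p+j=(m+j+1)/2\leq m$, which is precisely the weight-$(+j)$ element of the \emph{same} chain. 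Hence $M_f^{\,j}$ sends the basis of the $(-j)$-th column bijectively onto the basis of the $(+j)$-th column, so the induced map $Gr^{-j}_W\to Gr^{\,j}_W$ is an isomorphism.

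Having verified the two conditions, uniqueness identifies $W(M_f)$ with the canonical weight filtration of the operator $M_f$, which depends only on $M_f:A_f\lrar A_f$ and not on the auxiliary choices $g_1,g_{m_0+2},\dots$ made while building the adapted basis; this is the statement of the Lemma. I expect the only point requiring care to be the bookkeeping in the second condition, namely checking chain by chain that a given pair of symmetric weights is realized by a single pair of basis vectors exchanged by the appropriate power of $M_f$; this is purely combinatorial once the parity constraint $m\equiv j+1\ (\mathrm{mod}\ 2)$ on the surviving chain sizes is recorded as above. Alternatively, one may avoid citing the general theorem and instead match $W(M_f)$ directly with the basis-free inductive description recalled in the Introduction, where the extreme nontrivial terms $W_{-m_0+1}(M_f)=Ann_{A_f}(f^{m_0})=\ker M_f^{m_0}$ and $W_{m_0}(M_f)=(f^{m_0})=M_f^{m_0}(A_f)$ are manifestly intrinsic, and the remaining terms arise from the same construction applied to the induced nilpotent operator on $Ann_{A_f}(f^{m_0})/(f^{m_0})$ of index of nilpotency $m_0-1$; the same combinatorial verification then shows the two descriptions agree.
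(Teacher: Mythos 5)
Your proof is correct, but it takes a genuinely different route from the paper's. The paper proves the lemma by exhibiting a basis-free \emph{inductive} construction of the filtration: the extreme terms are the intrinsic ideals $Ann_{A_f}(f^{m_0})$ (the kernel of $M_f^{m_0}$) and $(f^{m_0})$ (its image), and the remaining terms are obtained by applying the induction hypothesis to the nilpotent map induced by $M_f$ on $Ann_{A_f}(f^{m_0})/(f^{m_0})$, whose nilpotency index has dropped by one; this is precisely the alternative you sketch in your closing sentences. Your main argument instead invokes the uniqueness of the monodromy weight filtration of a nilpotent endomorphism (\cite{schmid}, \cite{G-Sch}): any filtration satisfying $M_f(W_j)\subseteq W_{j+2}$ together with $M_f^{\,j}:Gr^{-j}_W\to Gr^{\,j}_W$ an isomorphism for $j\geq 0$ must coincide with the canonical one, and you then verify both properties for the basis-built filtration from the chain/pyramid combinatorics. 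Your bookkeeping is right: weight $-j$ occurs at place $p=(m-j+1)/2$ in chains of size $m\geq j+1$ with $m\equiv j+1\ (\mathrm{mod}\ 2)$, and $M_f^{\,j}$ carries these bijectively (same chain, place $q=(m+j+1)/2\leq m$) onto the weight-$j$ elements, which gives the graded isomorphism including surjectivity. What your approach buys: the characterizing properties are checked explicitly, no induction is needed, and the logical dependence is isolated in a single standard citation; it is arguably more detailed than the paper's proof, which leaves the identification of the basis construction with the intrinsic inductive one largely implicit. What the paper's approach buys: it is self-contained (no appeal to the uniqueness theorem as a black box), and the inductive description it produces — including how the Jordan type $(\ell_1,\ldots,\ell_r)$ degenerates to $(\ell_1,\ldots,\ell_{r-2}+\ell_r,\ell_{r-1})$ on the subquotient — is the form of the filtration used elsewhere in the paper, e.g. in the flag construction of the Introduction. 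One small imprecision on your side: the condition $M_f(W_j)\subseteq W_{j+2}$ in the characterization should be required for all $j$, not only $j\geq 0$; your verification does in fact establish it for all $j$, so nothing is lost.
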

 \begin{proof} 
 We give an intrinsic definition of the weight filtration:
It proceeds by induction on the maximal length $m_0$ of the Jordan chains of $M_f$,
defining the extreme elements of the filtration  by $$W_{m_0}(M_f) := Ann(f^{m_0}) \ \ \hbox{and}\ \   W_{-m_0}(M_f):=(f^{m_0}).$$
For the induction step one considers the map induced by $M_f$ on $\frac{Ann(f^{m_0})}{(f^{m_0})} $.
If $M_f$ has type $(\ell_1,\ldots,\ell_r)$, then the induced map in $\frac{Ann(f^{m_0})}{(f^{m_0})} $ will
have type $(\ell_1,\ldots\ell_{r-2}+\ell_r,\ell_{r-1})$, since 
we have removed only the 2-extreme blocks of the $r$-platform,
so the remaining blocks will be of size $r-2$, and are incorporated to
 the $(r-2)$-platform of height $\ell_{r-2}$, 
to give the new platform of height $\ell_{r-2}+\ell_r$.
Now we only have Jordan chains of length strictly less than $m_0$, so induction hypothesis apply. We pull back to $Ann(f^{m_0})$
the obtained flag in $\frac{Ann(f^{m_0})}{(f^{m_0})} $, and we complete with the already chosen $W_{\pm m_0}(f)$. This completes the proof.
\end{proof}

\subsection{The $W(f)$-Graded Jacobian  $Gr^*_{W(f)}(A_f,M_f)$}
\begin{Def}
The $W(f)$-graded  Jacobian is a $\mathbb{C}$-vector space  of dimension $\mu$ defined by
 $$Gr^*_{W(f)}(A_f,M_f):=\bigoplus_{j={-m_0}}^{m_0}Gr_{W(f)}^j(A_f,M_f)
\hskip 2cm Gr_{W(f)}^j(A_f,M_f):=\frac{W_j(M_f)}{W_{j+1}(M_f)}.$$
\end{Def}

We have natural projections
$$gr_j:W_j(f) \longrightarrow \frac{W_j(f)}{W_{j+1}(f)} \subset Gr^*_{W(f)}(A_f,M_f)
$$
that may be thought as taking the `principal part' (with respect to the
weight filtration $W(f)$).
$Gr^*_{W(f)}(A_f,M_f)$ inherits a
graded basis $\{gr(\omega_1),\ldots,gr(\omega_\mu)\}$.
The map $ M_f $ sends  $W_j(M_f) $ to $ W_{j+2}(M_f)$, so there is an induced
degree $2$ map
$$gr(M_f): Gr^*_{W(f)}(A_f,M_f) \longrightarrow Gr^*_{W(f)}(A_f,M_f)
\hskip 1.5cm  gr(M_f)(gr(\omega_\ell)) = gr(f \omega_{\ell})=gr(\omega_{\nu(\ell)}).$$
It has in the graded basis the same matrix form as $M_f$:
$(\delta_{j,\nu(j)})$.

For $j=-m_0,\ldots,0$ define the primitive spaces
$$Prim^j(A_f,M_f) \subset Gr_{W(f)}^j(A_f,M_f)$$
as the vector spaces generated by $\{gr(\omega_j)\ / \ j \notin Im(\nu_f)\}$.
They correspond to the basis elements on the top block of the columns on the left side of the pyramid,
or the basis elements most to the left on each of the platforms.
They are the elements of the basis which are not divisible by $f$.
\begin{center}
    \centering 
    \includegraphics[width=.5\textwidth]{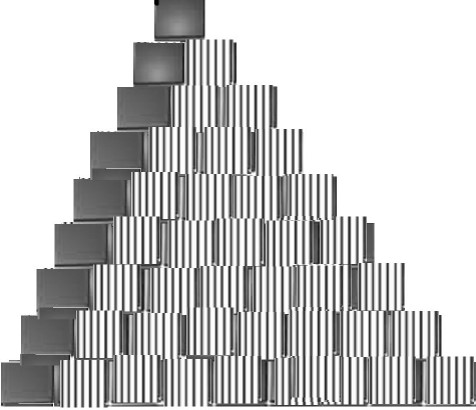}
\centerline{Fig. 7. The primitive subspaces    $A_f$ are colored.}
\end{center}

We obtain from this chain description  for $j=-m_0\ldots,m_0$ the direct sum
\begin{equation}
\label{prim2}
Gr_{W(M_f)}^j(A_f,M_f) = \bigoplus_{  - m_0\leq j-2k\ \leq 0 } M_f^k Prim^{j-2k}(A_f,M_f),
\end{equation}
which  describes the columns $Gr_{W(M_f)}^j(A_f,M_f)$ formed by piling up primitive blocks. The difference between one column and the
 column 2 steps to the left or right is the addition or deletion of one of the corresponding primitive block on top.
\begin{center}
    \centering 
    \includegraphics[width=.5\textwidth]{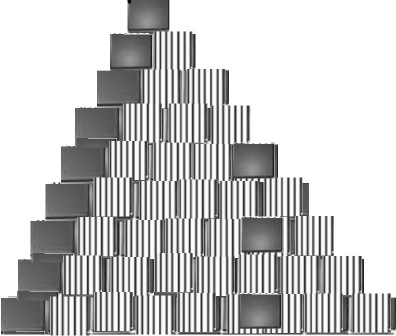}
\centerline{Fig. 8. The column description of $Gr_{W(M_f)}^j(A_f,M_f)$  by means of  primitive forms on the right.}
    \label{fig14}
\end{center}

 Intrinsically we have
$$Prim^{j}(A_f,M_f) := Ker [M_f^{-j+1}:Gr_{W(M_f)}^j(A_f,M_f))\longrightarrow Gr_{W(M_f)}^{-j+2}(A_f,M_f)]
\simeq \frac{Ann_{A_f}(f) \cap (f^{j})}{Ann_{A_f}(f) \cap (f^{j+1})},$$
since on a platform, the left block corresponds bijectively with the right block,
by applying $M_f$ an adequate number of times.

 Define the even and odd pieces of $Gr_{W(f)}^*(A_f,M_f))$ by
 $$Gr_{W(M_f)}^{even}(A_f,M_f)= \bigoplus_{k} Gr_{W(M_f)}^{2k}(A_f,M_f)
 \hskip 1cm, \hskip 1cm
 Gr_{W(M_f)}^{odd}(A_f,M_f) = \bigoplus_k Gr_{W(M_f)}^{2k+1}(A_f,M_f)$$
 and the induced maps
 $$
 gr(M_f): Gr_{W(M_f)}^{even}(A_f,M_f) \longrightarrow Gr_{W(M_f)}^{even}(A_f,M_f)\hskip 2mm, \hskip 2mm
  gr(M_f):Gr_{W(M_f)}^{odd}(A_f,M_f) \longrightarrow Gr_{W(M_f)}^{odd}(A_f,M_f)
 $$

We may summarize our conclusion in a way which is independent of the chosen basis as:

\begin{Prop} There is a filtration of $A_f$, called the $f$-weight filtration, canonically induced from the nilpotent map $M_f$ in the Jacobian Algebra $A_f$:

$$A_f = W_{-m_0}(A_f,M_f)\supset W_{-m_0+1}(A_f,M_f)  \supset \cdots\supset    W_{m_0}(A_f,M_f) \supset  W_{m_0+1}(A_f,M_f) = 0 $$
with the properties:

1) $M_f( W_{j}(A_f,M_f)) \subset W_{j+2}(A_f,M_f)$

2) Denote the associated graded objects by 
$$Gr_j(A_f,M_f):= \frac{W_{j}(A_f,M_f)}{ W_{j+1}(A_f,M_f)} \hskip 5mm , \hskip 5mm  Gr_{*}(A_f,M_f):=\bigoplus_{j=-m_0}^{m_0}  Gr_j(A_f,M_f)$$
$$Gr_{even}(A_f,M_f):=\bigoplus    Gr_{2j}(A_f,M_f) \hskip 5mm  , \hskip 5mm 
Gr_{odd}(A_f,M_f):=\bigoplus Gr_{2j+1}(A_f,M_f),$$
and the   $+2$-graded maps induced from $M_f$:
$$M_f: Gr_{even}(A_f,M_f)\longrightarrow Gr_{even}(A_f,M_f) \hskip 5mm,\hskip 5mm
M_f:Gr_{odd}(A_f,M_f)\longrightarrow Gr_{odd}(A_f,M_f)$$
with primitive pieces for $j=0,\ldots, m_0$:
$$Prim_{-j}(A_f,M_f):=Ker[M_f^{j+1}:Gr_{-j}(A_f,M_f)\longrightarrow  Gr_{j+2}(A_f,M_f)
$$
of dimension $p_j$ and
giving rise to a Lefschetz-type decomposition for $j=-m_0,\ldots,m_0$:
\begin{equation}
\label{primitive}
 Gr_{j}(A_f,M_f) = \bigoplus
M_f^k Prim_{j-2k}(A_f,M_f)
\end{equation}
and isomorphisms
$$
 M_f^j:Gr_{n-j}(A_f,M_f)\longrightarrow Gr_{n+j}(A_f,M_f).
$$

3) The interval $I:=\{1,\ldots,\mu\}$ may be divided into subintervals
$I=\{I_{-m_0},\ldots,I_{m_0}\}$
and each one further $I_j=\{I_{j,k}\}$
with  $I_{j,k}$ with $p_{j-2k}$ elements,
for $j=m_0,\ldots,0$, and we may choose a basis $g_1,\ldots,g_\mu$ of $A_f$  with an increasing map 
$\alpha:I\longrightarrow
\{I,\mu+1\} \ , \ g_{\mu+1}:=0$
such that:

a) $M_f(g_k)= g_{\alpha(k)}$,

b) $\{g_\ell\}$ for $\ell\in \{I_j,\ldots, I_{m_0}\}$ form a basis of $W_j(A_f,M_f)$,
 
 c) $gr(g_\ell)$ with $\ell\in I_{j,k}$ 
 are a basis of $M_f^k Prim_{j-2k}(A_f,M_f)$.

\end{Prop}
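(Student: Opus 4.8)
The plan is to read the Proposition as a basis-free repackaging of the explicit construction of Section 2.1 together with Lemma \ref{lemma:weigthbasisMf}, so that rather than building new machinery I would verify each assertion directly against the pyramid picture. For property (1) I would first record the weight computation: a basis vector occupying the $k$-th slot of a Jordan chain of size $m$ has weight $-m+2k-1$, and $M_f$ either advances it to the $(k+1)$-st slot, whose weight is $-m+2(k+1)-1 = (-m+2k-1)+2$, or kills it at the right end of the chain. Since $W_j(A_f,M_f)$ is spanned by basis vectors of weight $\geq j$ and $0\in W_{j+2}(A_f,M_f)$, this gives $M_f(W_j(A_f,M_f))\subset W_{j+2}(A_f,M_f)$ at once, and Lemma \ref{lemma:weigthbasisMf} guarantees that the filtration itself is independent of the basis, so property (1) is intrinsic.

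For property (2) the Lefschetz-type decomposition (\ref{primitive}) is exactly the column/platform identity already obtained as (\ref{prim2}), now read on the intrinsic graded pieces $Gr_j(A_f,M_f)=W_j(A_f,M_f)/W_{j+1}(A_f,M_f)$: each column is the direct sum of the shifted primitive blocks $M_f^k\,Prim_{j-2k}(A_f,M_f)$, with $Prim_{-j}(A_f,M_f)$ of dimension $p_j$ defined as $Ker[M_f^{j+1}:Gr_{-j}(A_f,M_f)\to Gr_{j+2}(A_f,M_f)]$. For the isomorphisms (with the weight filtration centered at $0$, i.e. $M_f^j:Gr_{-j}(A_f,M_f)\lrar Gr_{j}(A_f,M_f)$, since $M_f^j$ shifts the weight by $+2j$) I would argue chain by chain: on a single Jordan chain of size $m$ the induced map $gr(M_f)$ carries the left half of each platform bijectively onto its mirror right half, because the platform is symmetric about $0$ and applying $M_f$ the appropriate number of times matches the two halves slot for slot; summing over chains yields the isomorphism. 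This is just the standard $sl_2$/Hard-Lefschetz statement for a nilpotent operator with its centered weight filtration, and here it is already visible in the symmetric pyramid.

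For property (3) I would take the reordered basis $\omega_1,\ldots,\omega_\mu$ of (\ref{eqn:baseAf_omegas}) as the $g_1,\ldots,g_\mu$, let $I_j$ index the $j$-th column and $I_{j,k}$ index the primitive block $M_f^k\,Prim_{j-2k}(A_f,M_f)$ (so $|I_{j,k}|=p_{j-2k}$), and set $\alpha:=\nu_f$. Then (b) is immediate, since the vectors in columns $j,\ldots,m_0$ are precisely those of weight $\geq j$ and hence span $W_j(A_f,M_f)$; (c) is the statement that, within column $j$, the ordered sub-blocks are bases of the summands $M_f^k\,Prim_{j-2k}(A_f,M_f)$ of (\ref{primitive}), which holds because the ordering runs block by block up the column; and (a), $M_f(g_k)=g_{\alpha(k)}$, is the defining relation of $\nu_f$ in (\ref{eqn:aplicactionnu_omegas_base}).

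The one point that requires genuine care, and which I expect to be the main obstacle, is that $\alpha$ is \emph{strictly increasing}. Since the basis is ordered column by column from left to right (lowest weight first) and bottom to top within each column, while $M_f$ sends a vector in column $j$ to one in column $j+2$, I must check that the induced index map preserves order globally and does not merely shift it upward. This reduces to ordering the primitive blocks, and the vectors inside each primitive block, consistently across all columns, so that $M_f$ matches the $r$-th vector of the $k$-th block in column $j$ to the $r$-th vector of the $(k{+}1)$-st block in column $j+2$; the weaving structure of Fig. 4 is exactly what makes this compatible. Verifying this compatibility — equivalently, that $\nu_f$ determined by $M_f(\omega_\ell)=\omega_{\nu_f(\ell)}$ is strictly increasing — is the crux, and everything else in the Proposition is bookkeeping against the pyramid.
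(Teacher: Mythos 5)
Your proposal is correct and is essentially the paper's own argument: the paper gives no separate proof of this Proposition, presenting it instead as a basis-free summary of the Section 2 construction --- the chain basis and pyramid of Section 2.1, the basis-independence Lemma \ref{lemma:weigthbasisMf}, and the column decomposition \eqref{prim2} --- which is exactly what you verify clause by clause. Your extra care in checking that $\nu_f$ of \eqref{eqn:aplicactionnu_omegas_base} is order-preserving (a fact the paper simply asserts) and your reading of the misprinted isomorphisms as the centered $M_f^j\colon Gr_{-j}(A_f,M_f)\to Gr_{j}(A_f,M_f)$ only fill in details the paper leaves implicit.
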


\subsection{  Grothendieck's Bilinear Form in the Jacobian Algebra }

 Define the linear transformation
 $$
L:\mathcal{O}_{\mathbb{C}^{n+1},0} \longrightarrow \mathbb{C} \ \ ,\ \ L(h):= (\frac{1}{2 \pi i})^{n+1}\int_\Gamma
\frac{h \underline{dz}}{f_0\cdots f_n}
$$ where  
$\underline{dz}=dz_0\wedge \ldots \wedge dz_n$ and $\Gamma$ is the $(n+1)$-real cycle $$
  \Gamma:=\bigl\{z\in\mathbb{C}^{n+1}:\, |f_{j}(z)|=\varepsilon\:,0\leq j\leq n\bigr\},\hskip 1cm d(\arg f_{0})\wedge\cdots\wedge d(\arg f_{n})> 0
  \hskip 5mm , \hskip 5mm \varepsilon < < 1. $$
On using Stokes' formula, one has $L(h)=0$ for  $h\in J_{f}$. So $L$ defines a $\mathbb{C}$-linear map $L: A_{f} \lrar \mathbb{C}$.

\begin{Def} Grothendieck's bilinear pairing is defined by
\begin{equation}
\label{ResPair}
 res_{f}:A_f\times A_f\lrar    \mathbb{C},\hskip .7cm res_f([h_{1}\, ],[h_{2}\, ])  = L([h_{1}h_{2}]) = \left(\frac{1}{2 \pi i}\right)^{n+1}\int_\Gamma
\frac{h_1h_2}{f_0\cdots f_n}\underline{dz}.
\end{equation}\end{Def}
It is  a nondegenerate pairing, by Grothendieck Local Duality (\cite{grifharis}, p. 659). The class  $Hess(f)$ 
of the Hessian determinant of $f$ generates the socle in $A_f$: the minimal
non-zero ideal in $A_f$ (\cite{EL}). The fundamental property of $L$ that is used to obtain a non-degenerate
bilinear form from the algebra structure of $A_f$ is that $L(Hess(f ))\neq 0$.
  
\section{The Bilinear Form in Cohomology for Hypersurfaces in Projective Space with an Isolated Singularity}

\subsection{Pencils of Hypersurfaces in Projective Space}

Let $f:\mathbb{C}^{n+1}\longrightarrow \mathbb{C}$ be a polynomial  such that $f^{-1}(0)$
extends   to the hyperplane at infinity,
$V_0 := \overline{f^{-1}(0)}$ as a smooth $n$ dimensional variety except at $0$, where it has an isolated singularity.
Then $V_t := \overline{f^{-1}(t)}$ is a projective manifold of dimension $n$ for $t\in \Delta-\{0\}$,
 a sufficiently small punctured disk.
The interesting part of the cohomology algebra $H^*(V_t,\QQ)$ of $V_t$ is in $H^n(V_t,\QQ)$. We have a non-degenerate
bilinear form given by cup product:

$$Q:H^n(V_t,\QQ) \times H^n(V_t,\QQ) \longrightarrow H^{2n}(V_t,\QQ)\simeq \QQ,$$
which is unimodular, symmetric for $n$ even and antisymmetric for $n$ odd.
Extending the coefficients to $\QQ \oplus i\QQ$ we obtain 2 types of extensions of the bilinear form $Q$, the $\mathbb{C}$-linear and the
Hermitian:
$$(u_0+iu_1)Q(v_0+iv_1), \hskip 3cm (u_0+iu_1)Q(v_0-iv_1)$$
 which receive block expressions, respectively:

\begin{equation}
\label{eqnQV1}
Q_\mathbb{C} :=
\begin{pmatrix}
Q & 0 \cr
0 & -Q
\end{pmatrix}
+ i \begin{pmatrix}
0& Q \cr
Q & 0
\end{pmatrix},
\hskip 2cm
Q_{\bar{\mathbb{C}}} :=
\begin{pmatrix}
Q & 0 \cr
0 & Q
\end{pmatrix}
+ i \begin{pmatrix}
0& -Q \cr
Q & 0
\end{pmatrix}.
\end{equation}
The first 3 matrices are symmetric if $Q$ is symmetric and the last one is symmetric if $Q$
is antisymmetric.
If $n$ is even, then $\QQ_{\bar{\mathbb{C}}}$ is Hermitian symmetric, and if $n$ is odd
$$i\QQ_{\bar{\mathbb{C}}} =
  \begin{pmatrix}
0&  Q \cr
-Q & 0
\end{pmatrix}
+ i \begin{pmatrix}
Q & 0 \cr
0 & Q
\end{pmatrix}
$$ is
Hermitian symmetric. If we extend the coefficients to $\mathbb{C}$ the matrix expressions of the extended
bilinear forms is the same (see \cite{hertl1}).

If we use the de Rham complex of $C^\infty$-differential forms on $V_t$ to
represent $H^n(V_t,\mathbb{C})$
, the bilinear form is obtained by cup product of two closed $n$-differential forms,
and integrating over $V_t$ the resulting $2n$-form.
The symmetry or anti-symmetry is then a consequence of the
alternating nature of the exterior algebra (see \cite{steenbrink0}).

\subsection{The Semi-simple Decomposition of $H^n(V_t,\mathbb{C})$}

 The  monodromy map
$M:H^n(V_t,\QQ) \longrightarrow H^n(V_t,\QQ)$ is the effect on cohomology of going around $t=0$ for the map $f$. It is a $Q$-automorphism: $Q(Mu,Mv) = Q(u,v)$ due to the functoriality
of the cup product.
Consider the map $M_s:H^n(V_t,\mathbb{C}) \longrightarrow H^n(V_t,\mathbb{C})$
defined by multiplication by $\lambda$ restricted to the generalized
$\lambda$-eigenspace $H^n(V_t,\mathbb{C})_\lambda $ of $M$, and $M_u:=M_s^{-1}M$ the unipotent part of $M$.
The monodromy automorphism $M$ is the product of its
semisimple and unipotent part $M=M_uM_s$.
Since $M$ is real we have
$${\overline{ H^n(V_t,\mathbb{C})_\lambda}} =  H^n(V_t,\mathbb{C})_{\overline{\lambda}} $$
so that if
$$H^n(V_t,\RR)_{\lambda,{\overline{\lambda}}} = [H^n(V_t,\mathbb{C})_ \lambda  \bigoplus H^n(V_t,\mathbb{C})_{\overline{\lambda}}]\bigcap H^n(V_t,\RR)$$
we have a $Q$-orthogonal   direct sum, $M$-invariant decomposition

\begin{equation*}\label{eqnob} 
H^n(V_t,\RR) = H^n(V_t,\RR)_1 \bigoplus H^n(V_t,\RR)_{-1} \bigoplus [\bigoplus_{Im \lambda>0} H^n(V_t,\RR)_{\lambda,{\overline{\lambda}}}]
\end{equation*}

\begin{equation}\label{eqnoc}
H^n(V_t,\QQ) = H^n(V_t,\QQ)_1 \bigoplus H^n(V_t,\QQ)_{ 1}^{\perp_Q}, 
\end{equation} 
$$ H^n(V_t,\QQ)_{ 1}^{\perp_Q}:=H^n(V_t,\QQ)_{-1} \bigoplus [\bigoplus_{Im \lambda>0} H^n(V_t,\QQ)_{\lambda,{\overline{\lambda}}}]$$

\subsection{The Unipotent Decomposition of $H^n(V_t,\QQ)$}\label{sec:Nu_N}

  Define
$$N_V:=  log(M_u):=
 \sum_{j\geq 1} \frac{(-1)^{j+1} (M_u-Id)^j}{j}.$$ deleting the subscript $V$, we have which is $Q$-antisymmetric: $Q(N\bullet,\bullet)=-Q(\bullet,N\bullet)$
and nilpotent, say $N^{r_0+1}=0$. Since $M_sM_u=M_uM_s$, the description that follows may be applied to each summand in \eqref{eqnoc} and
then take the direct summand, without being explicit about this.

There is a
canonical filtration $W(N)$ of $H^n(V_t,\QQ)$ induced by $N$,   the weight filtration (\cite{schmid})
\begin{equation}\label{eqn:weightFiltrationN_V}
0 \subset W_{-r_0}(N)\subset W_{-r_0+1}(N)\subset \cdots \subset W_{0}(N)\subset \cdots\subset W_{ r_0-1}(N) \subset W_{r_0}(N)\subset H^n(V,\QQ) .
\end{equation}
We may visualize the weight filtration by choosing a Jordan basis of $H^n(V,\QQ)$ with respect to $N$: To each $\ell$-Jordan block associate a
horizontal chain of     length 2$\ell$, marking the even integer points on the chain and putting on each
point an element of the basis  beginning on the left $A,NA,\ldots,N^{\ell -1}A$.
The mapping $N$ is a movement of the points on  the chain 2 steps to the right, and the extreme right points disappear under $N$.
 Organize all the chains in such a way that the center of each chain is located over the point 0.
The element $W_j(N)$ of the weight filtration are then spanned by those elements of the basis which are
located to the right of $j$.

Order the above basis of $H^n(V_t,\QQ)$ beginning with a basis for $W_{-r_0}$, then completing it to a basis of $W_{-r_0+1}$, etc.:
 $\{\{A_1,\ldots,A_{\ell_1}\}, \ldots,\{\ldots,A_\ell\}\}$. We remember the collection of basis elements that appear in each step, so as to obtain representations of  endomorphisms of $H^n(V_t,\QQ)$ in the form of
block matrices.
There is a decreasing function $$\nu_N:\{1,\ldots,\ell\} \rightarrow \{0,1,\ldots,\ell\}$$
with the property that $N(A_k) = A_{\nu_N(k)}$, with $A_0:=0$. The matrix of $N$ in this basis is $(\delta_{j,\nu_N(j)})$.

Introduce the $W(N)$-graded  Cohomology Algebra
 $$Gr_{W(N)}(H^n(V_t,\QQ),N):=\bigoplus_{j={-r_0}}^{r_0}Gr_{W(N)}^j(H^n(V_t,\QQ),N),
\hskip 1.5cm Gr_{W(N)}^j(H^n(V_t,\QQ),N):=\frac{W_j(N)}{W_{j-1}(N)}$$
 with graded basis $\{gr(A_1),\ldots,gr(A_\ell)\}$.
The induced map
$$gr(N): Gr^{N}(H^n(V_t,\QQ),N) \longrightarrow Gr^{N}(H^n(V_t,\QQ),N),
\hskip 1.5cm  gr(N)(gr(A_j)) = gr(A_{\nu_N(j)})$$
has in the graded basis the same matrix form as $N$:
$(\delta_{j,\nu(j)})$.

For $j=0,\ldots,m_0$ the primitive spaces
$$Prim_j(H^n(V_t,\QQ),N) \subset Gr_{W(N)}^j(H^n(V_t,\QQ),N)$$
are generated by $\{gr(A_j)\ / \ j \notin Im(\nu_N)\}$ and
they are  the beginning of the chains
description  of $Gr_{W(N)}(H^n(V_t,\QQ),N)$.
We obtain from the chain image for $j=-m_0\ldots,m_0$ the direct sum
\begin{equation}
\label{prim1}
Gr_{W(N)}^j(H^n(V_t,\QQ),N) = \bigoplus_{0\leq j+2k\leq m_0} N^k Prim_{j+2k}(H^n(V_t,\QQ),N),
\end{equation}
which  describes the columns $Gr_{W(N)}^j(H^n(V_t,\QQ),N)$ formed by piling up primitive blocks. The difference between one column and the
 column 2 steps to the left or right is the addition or deletion of one of the corresponding primitive block on top. Intrinsically we have
$$Prim_{j}(V) := Ker [N ^{j+1}:Gr_{W(N)}^j(H^n(V_t,\QQ),N))\longrightarrow Gr_{W(N)}^{-j-2}(H^n(V_t,\QQ),N)]
\simeq Ann_{H^n(V_t,\QQ)}(N) \cap (N^{j+1}).$$

\subsection {Vanishing Cohomology and its Non-degenerate Pairing}\label{sec:4.1}

Let $f:\mathbb{C}^{n+1} \longrightarrow \mathbb{C}$ be a germ of a holomorphic function at $0$ with an isolated singularity. Following Brieskorn \cite{Bries} and Scherk \cite{Sherk} (cf. \cite{SherkSteen}) one can make an analytic change
of coordinates in such a way that $f$ is a polynomial map of sufficiently large degree and such that $V_t:=\overline{f^{-1}(t)}$ its closure in $\mathbb{C} P^{n+1}$ is  a smooth
hypersurface at infinity, and such that denoting $X_t:=V_t \cap B$ with $B$ a sufficiently small ball in $\mathbb{C}^{n+1}$ the restriction map gives
rise to the exact sequence

\begin{equation}
 0 \longrightarrow Ker(M-Id) \longrightarrow H^n(V_t,\QQ) \longrightarrow  H^n(X_t,\QQ)\longrightarrow 0
\end{equation}
Since $Ker(M-Id)=Ker(N)$ (\cite{Sherk}) we have an isomorphism
$$\phi: H^n(X_t,\QQ) \longrightarrow  \frac{H^n(V_t,\QQ)}{Ker(N)}$$
and we  define the Hertling-Steenbrink polarization (see \cite{hertl1}, \cite{hertl2} and \cite{hertl3}) bilinear form  \begin{equation}
Q_{X_t}:H^n(X_t,\QQ) \times H^n(X_t,\QQ) \longrightarrow \QQ \hskip 1cm
Q_{X_t}(\bullet,\bullet):=Q(2\pi i N_V \phi \bullet, \phi \bullet)
\end{equation}
 as in \eqref{eqnQV1}.

  \section{ The Differential Description of Grothendieck duality}

  The Brieskorn lattices $H_0^{\prime\prime}$
  and $H_0^\prime$
are defined by the exact sequence of $\mathbb{C}$-vector spaces:
$$0  \longrightarrow H_0^\prime:=\frac{df \wedge \Omega^n_{\mathbb{C}^{n+1},0}}{df \wedge d\Omega^{n-1}_{\mathbb{C}^{n+1},0}}\longrightarrow H_0^{\prime\prime}:=\frac{ \Omega^{n+1}_{\mathbb{C}^{n+1},0}}{df \wedge d\Omega^{n-1}_{\mathbb{C}^{n+1},0}}\longrightarrow \Omega_f:= \frac{ \Omega^{n+1}_{\mathbb{C}^{n+1},0}}{df \wedge \Omega^n_{\mathbb{C}^{n+1},0}} \longrightarrow 0.$$

The cohomology bundle over a punctured disk  $R^nf_*\mathbb{C}_{\mathbb{C}^{n+1},0}$ will be denoted by $\underline{H}^n$. It is a $\mu$-dimensional flat
bundle with the Gauss-Manin connection $\partial_t$ whose fiber over $t$ is the
vanishing cohomology group $H^n(X_t,\mathbb{C})$. 

Taking the universal covering $e:D_{\infty}\lrar D^{*}$, $\xi\;\mapsto \exp{2\pi i\xi}$  of $D^{*}$ and the inclusion map $i:D^{*}\rightarrow D$ we have the canonical Milnor fibre given by the pullback
\[\xymatrix @C=.75pc {
	X_{\infty}:=X^{*}\times_{\Delta{*}}\Delta_{\infty}\ar[d]\ar[rr]&&X^{*}\ar[d]^-{f}\\
	D_{\infty}\ar[rr]^-{e}&&D^{*}
	,}
\]
where $f:X^{*}\lrar\Delta^{*}$ is a  $\mathcal{C}^{\infty}$ fiber bundle whose fibres $X_{t}:=f^{-1}(t)\cap X$ (see \cite{hertl2,hertl3}).

So we have homotopy equivalences given by the inclusions
\begin{equation*}\label{FIBRAMILNORhomotopicos}
\xymatrix{X_{u(\tau)}\simeq (X_{\infty})_{\tau}\ar @{^{(}->}[r]&X_{\infty}, \hskip 1cm x\mapsto \varsigma_{\tau}(x)=(x,\tau),
}
\end{equation*}

and so isomorphisms
\begin{equation}\label{eqn:isosCanonicalfibers}
 \xymatrix@C=3pc{
	H^{n}(X_{\infty},\mathbb{C})\ar[r]^-{\varsigma^{*}_{\tau}}& H^{n}(X_{t},\mathbb{C}),\hskip 1cm H_{n}(X_{t},\mathbb{C})\ar[r]^-{(\varsigma_{\tau})_{*}}& H_{n}(X_{\infty},\mathbb{C}),}
\end{equation}
where $t=e(\tau)$ (see \cite{kulikov}).

The bundle $\underline{H}^n$ has a natural  $\partial_t$-invariant non-degenerate bilinear form
obtained by gluing the bilinear form explained in section \ref{sec:4.1} for each  $X_t$. Such bilinear form induces, up to conjugation by the isomorphisms in \eqref{eqn:isosCanonicalfibers}, an equivalent bilinear form defined on $H^n(X_\infty,\mathbb{C})$ which without loss of generality we also denote as $Q$. On the other hand, we have monodromy maps $M_t$ on $H^{n}(X_{t},\mathbb{C})$ which induce the {monodromy} map $M$ on $H^{n}(X_{\infty},\mathbb{C})$: 
$$
\xymatrix  @C=2pc  @R=2pc{H^{n}(X_{\infty},\mathbb{C})\ar[r]^-{M}\ar[d]_-{\zeta^{*}_{\tau}}& H^{n}(X_{\infty},\mathbb{C})\ar[d]^-{\zeta^{*}_{{\mathbb{C}\tau}}}\\
	H^{n}(X_{t},\mathbb{C})\ar[r]^-{M_t}& H^{n}(X_{t},\mathbb{C}).
}
$$

With respect to the decomposition $M = M_{s}M_{u}=M_{u}M_{s}$  into semisimple and unipotent parts. there is a eigenspace decomposition
\begin{equation*}
	H:=H^{n}(X_{\infty},\mathbb{C})=\bigoplus_{\lambda} H_{\lambda},
\end{equation*}
with respect to $M_{s}$, i.e.,   $H_{\lambda}:= \ker(M_{s}-\lambda I)$. Set $ H_{\neq 1}:=\bigoplus_{\lambda} H_{\lambda\neq1}$ and let $N:=-\frac{1}{2\pi i}\log M_{u}\in End_{\mathbb{C}}\bigl(H\bigr)$ be the logarithm of the unipotent part of the monodromy which is nilpotent (by monodromy theorem) with  nilpotence index $r_0\leq n+1$ (see \cite{hertl2} and \cite{kulikov}.) Hence, we have a similar description as shown in subsection \ref{sec:Nu_N}. In particular, we have an application $\nu_N$  as in \eqref{eqn:weightFiltrationN_V} that encodes the Jordan blocks of $N$ in terms of a basis that describes the corresponding weight filtration which we also denotes as $W_{\bullet}(N)$.
 
A class of holomorphic (univalued) sections of   $\underline{H}^n$ may be represented
by Laurent-type series expansions
$$V^{>-\infty}:=\Biggl\{\sum\limits_{\substack{j=1\\ k\in \ZZ}}^{r}t^{[(\beta_j+k)I-\frac{1}{2 \pi i}N]}A_{j,k}(t)\Biggr\},$$
where $A_{j,k}(t)$ is a Gauss-Manin (multivalued) flat section which takes values in $\underline{H}^n_{e^{-2 \pi i \beta_j}},\ \ \beta_j\in(-1,0]$.
The convergence of the corresponding series holds in each sector $a<\arg t<b$ with  $|t|$   small.
The
$V$-filtration is defined by
$$V^{\beta (>\beta)}:=\Biggl\{\sum\limits_{\substack{j=1\\ k\in \ZZ}}^{r}t^{[(\beta_j+k)I-\frac{1}{2 \pi i}N]}A_{j,k}(t),\hskip .5 cm \beta_j+k\geq (> )\beta\Biggr\}$$

$$ V^{-\infty} :=\bigcup_{\beta\to -\infty}V^{\beta}\hskip 1cm,\hskip 1cm \hskip 1cm \cdots V^{r-1} \supset V^r \supset V^{r+1}\cdots$$
and we denote its graded pieces by
$$\xymatrix@C=4pc{C_{\beta_j+k}:= Gr_V^{\beta_j+k}(V^{>-\infty}) = \frac{V^{\beta_j+k}}{V^{> \beta_j+k}}&\ar[l]^-{\psi_{\beta_j+k}}_-{\simeq}H^n_{e^{-2 \pi i\beta_j }}}, \ \ \hbox{where}\ \ \psi_{\beta_j+k}(A_{j,k}):=t^{[(\beta_j+k)I-\frac{1}{2 \pi i}N]}A_{j,k}(t) .$$
The $V$-weight $v(A)$ of $A \in V^{-\infty}$ is the  smallest $\beta_j+k$ such that $A \in V^{ \beta_j+k}$, and so, it is
the smallest rational number where $A_{\beta_j,k}\neq0$ in the expansion of $A$.

The expression of the Gauss-Manin connection   is
$\partial_t:V^{-\infty}\longrightarrow V^{-\infty}$ that has the direct sum expression
 $$
 \partial_t = \psi_{\beta_j+k-1} \bigl((\beta_j+k)I-\frac{1}{2\pi i} N\bigr)\psi^{-1}_{\beta_j+k} :C_{\beta_j+k} \longrightarrow C_{\beta_j+k-1}
 $$
on using Leibniz rule in the computation:
 $$\partial_tt^{[(\beta_j+k)I-\frac{1}{2 \pi i}N]}A_{j,k}(t) :=
 \partial_te^{([(\beta_j+k)I-\frac{1}{2 \pi i}N]\log(t))}A_{j,k}(t)=
  t^{[(\beta_j+k)I-\frac{1}{2 \pi i}N]}\biggl(\frac{(\beta_j+k)I-\frac{1}{2 \pi i}N}{t}\biggr)A_{j,k}(t)
, $$
and the maps $\partial_t^k$ correspond to the linear maps $L_k:C_{\beta_j+k}\longrightarrow C_{\beta_j}$ up to conjugation by $\psi$:
\begin{equation}
\label{eqn:dt}
\partial^{k}_t=\psi_{\beta_j} [(\beta_j+1)I-\frac{1}{2 \pi i}N]\cdots [(\beta_j+k)I-\frac{1}{2 \pi i}N]\psi^{-1}_{\beta_j+k},
\end{equation}
that is, $$L_k=[(\beta_j+1)I-\frac{1}{2 \pi i}N]\cdots [(\beta_j+k)I-\frac{1}{2 \pi i}N].$$
Introduce the spaces
$$\HH_0:=\frac{V^{>-1}}{V^{0}} \quad,\quad \HH_k:=\frac{V^{k-1}}{V^{k}}\ , \ k=1,\ldots,n \hskip 1cm,\hskip1cm \HH = \bigoplus_{k=0}^n \HH_k=\frac{V^{>-1}}{V^n}.$$
$\HH$ consists of those coefficients in the finite expansion between $\langle-1,n\rangle$,
with its induced $V$-filtration. We obtain induced $\mathbb{C}$-linear maps
$$\partial_t^j:\frac{V^{-1+j}}{V^{n}} \subset \HH \longrightarrow \HH \hskip 1cm,\hskip1cm
\partial_t^{-1}:\HH \longrightarrow \HH$$
by applying the maps in \eqref{eqn:dt}. Each $\HH_k,\,k=1,\ldots,n$, is isomorphic to vanishing cohomology $H^n(X_\infty,\mathbb{C})$:
$$\xymatrix{H^n(X_\infty,\mathbb{C})&\ar[l]_-{\psi^{-1}}^-{\simeq}\HH_0\oplus C_0&\ar[l]_-{\partial^{k}_t}^-{\simeq}\HH_{k}}\,,$$
where $\psi:=\bigoplus_{-1<\beta_{j}\leq 0}\psi_{\beta_j}$, and $\HH_0$ is isomorphic to $H^{n}_{\neq 1}:=\bigoplus_{-1<\alpha< 0}H^{n}_{e^{-2\pi i \alpha}}$.

Introduce in $\HH$ the non-degenerate bilinear form $\langle\ ,\ \rangle_\HH$  as the  orthogonal
decomposition
$$\HH=[(\HH_0\oplus C_0) \oplus \HH_{n}]\bigoplus^\perp_{\ell=1,.,\frac{n-1}{2} }[\HH_\ell \oplus \HH_{n-\ell}] \bigoplus^\perp \HH_{n/2}$$
and in each factor it is defined as
$$\langle v_1t^\ell,v_2t^{n-\ell}\rangle_\HH =
\displaystyle\frac{(-1)^{1+\lfloor\beta_j\rfloor}}{(2\pi i)^{n+1+\lfloor\beta_j\rfloor}} Q(L_kV_1,(-1)^{n-\ell}L_{n-k}V_2) ,\ \
v_1t^{\ell}\in Gr_V^{\beta_j+\ell},\ \ v_2t^{n-\ell} \in Gr_V^{-\beta_j-1+n-\ell},
$$
where $$L_\ell V_1\in H^{n}_{e^{-2\pi i \beta_{j}}},L_{n-\ell}V_2\in H^{n}_{e^{2\pi i \beta_{j}+1}} $$
are such that
$$
\psi_{\beta_j}^{-1}\partial^{k}_tv_1t^{\ell}= L_{\ell}\psi^{-1}_{\beta_j+k}(v_1 t^{\ell})=L_\ell V_1,$$
$$
\psi_{-\beta_j-1}^{-1}\partial^{n-\ell}_{t}v_1t^{n-\ell}= L_{n-\ell}\psi^{-1}_{\beta_j+k}(v_2t^{n-\ell})=L_{n-\ell} V_2,$$
and 
$\lfloor\beta_j\rfloor=-1,0,$ and $0$ otherwise. $Q$ is the cup product in flat sections.

The Gelfand-Leray residue defines a map
$$\Omega^{n+1}_{\mathbb{C}^{n+1},0} \longrightarrow \underline{H}^n\hskip 1cm,\hskip 1cm s(\omega)(t):=res_{X_t}\biggl[\frac{\omega}{f-t}\biggr]\in H^n(X_t,\mathbb{C})   $$
and it induces the  period map
$$s:H^{\prime\prime}_0 \longrightarrow V^{-1}$$
whose restriction to $H^\prime_0$ has the expression
$$s(df\wedge\eta) = [\eta|_{X_t}] \in H^n(X_t,\mathbb{C}), \hskip 1cm \eta \in \Omega^n_{\mathbb{C}^{n+1},0}.$$
The period map $s$  is injective and it satisfies $s(H_0^{\prime\prime})\supset s(H_0^{\prime})\supset V^{n}$. We identify the Brieskorn lattice $H_0^{\prime\prime}$
with its image in $V^{>-1}$. The   map
$\partial_t^{-1}: V^{>-1} \longrightarrow V^{>0}$ is bijective and it defines an injective map
$$\partial_t^{-1}:H_0^{\prime\prime}
\longrightarrow H_0^{\prime\prime}  \hskip .5cm, \hskip .5cm   s(\omega) := s(d\eta) \mapsto s(df\wedge \eta) \hskip .5cm, \hskip .5cm \partial_t^{-1}H_0^{\prime\prime} =H_0^{\prime}, $$
hence providing an isomorphism
\begin{equation}\label{eqb:mapsOmegaToBrieskornLatt}
s:\Omega_f \longrightarrow \frac{H_0^{\prime\prime}}{\partial_t^{-1}H_0^{\prime\prime} }.
\end{equation}

Introduce the subspaces
$$\HH_0^{\prime } := \frac{ H_0^{\prime } }{ H_0^{\prime }  \cap V^{n}}
\subset
\HH_0^{\prime\prime} := \frac{ H_0^{\prime\prime} }{ H_0^{\prime\prime}  \cap V^{n}}
\subset \HH \hskip   1cm, \hskip 1cm
\Omega_f \stackrel{\simeq}{\rightarrow} \frac{\HH_0^{\prime\prime}}{\HH_0^{\prime}}
$$
whose elements consist of the coefficients in the expansion between $\langle-1,n\rangle$
which are realized   by  differential forms in $df\wedge\Omega^n$ or in $ \Omega^{n+1}$, respectively. The las isomorphism is induced by $s$ in \eqref{eqb:mapsOmegaToBrieskornLatt}.

The next theorem is essentially due to Varchenko \cite{varch1} and its variant that we present here is following Hertling \cite[Proposition 4.4]{hertl3}:

\begin{Teo}[Grothendieck duality]
	\label{thm:varchenko}
The radical of the restriction of the bilinear form $<\ ,\ >_\HH$ to $\HH_0^{\prime\prime}$ is $\HH_0^{ \prime}$ , and the
induced non-degenerate bilinear form in $\frac{\HH_0^{\prime\prime}}{\HH_0^{\prime}}$ via the identification with $\Omega_f$ is
Grothendieck residue $res_f$.
\end{Teo}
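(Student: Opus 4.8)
The plan is to move the whole statement onto $\Omega_f$ through the isomorphism $s$ of \eqref{eqb:mapsOmegaToBrieskornLatt}, where it becomes a comparison of two bilinear forms: the residue $res_f$ and the form induced by $\langle\ ,\ \rangle_\HH$ on $\HH_0^{\prime\prime}/\HH_0^{\prime}\cong\Omega_f$. Two things must be shown: that $\HH_0^{\prime}$ is contained in the radical of $\langle\ ,\ \rangle_\HH|_{\HH_0^{\prime\prime}}$, so that the induced form is well defined; and that this induced form is $res_f$. Once both hold, non-degeneracy of $res_f$ (Grothendieck local duality) shows that the induced form has trivial radical, and since $\HH_0^{\prime}$ lies in the radical, the radical is exactly $\HH_0^{\prime}$.

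First I would treat the inclusion of $\HH_0^{\prime}$ in the radical. Here the essential input is that $\HH_0^{\prime}=\partial_t^{-1}\HH_0^{\prime\prime}$, together with the behaviour of $\langle\ ,\ \rangle_\HH$ under $\partial_t^{-1}$. Because $N$ is $Q$-antisymmetric and the operators $L_k=[(\beta_j+1)I-\tfrac{1}{2\pi i}N]\cdots[(\beta_j+k)I-\tfrac{1}{2\pi i}N]$ of \eqref{eqn:dt} are polynomials in $N$, the defining formula for $\langle\ ,\ \rangle_\HH$ obeys a sesquilinearity relation in which $\partial_t^{-1}$ shifts the pairing one step in the $V$-filtration. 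Since the formula retains only the leading $V$-graded contribution (the coefficients sitting in $\langle-1,n\rangle$, those of higher $V$-weight falling into $V^n$ and dying in $\HH$), applying $\partial_t^{-1}$ pushes that contribution out of range. An induction on the $V$-weight then yields $\langle\HH_0^{\prime},\HH_0^{\prime\prime}\rangle_\HH=0$.

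The substance of the theorem is the identification of the induced form with $res_f$, and for this I would use Saito's adapted basis $\omega_1,\ldots,\omega_\mu$ of original forms, whose principal parts are $A_j t^{\alpha_j-\frac{1}{2\pi i}N}$. On such a basis the polarizing form $S$ of the mixed Hodge structure is in the canonical normal form $S(L_jA_j,L_\ell A_\ell)=(-1)^{\ell+1+r}\delta_{\kappa(j),\ell}(2\pi i)^{n+1+r}$; substituting the principal parts into the defining expression for $\langle\ ,\ \rangle_\HH$, the prefactors $\frac{(-1)^{1+\lfloor\beta_j\rfloor}}{(2\pi i)^{n+1+\lfloor\beta_j\rfloor}}$ cancel the normalizing constants of $S$ and leave precisely $\delta_{\kappa(j),\ell}$. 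This matches $res_f([\omega_j],[\omega_\ell])=\delta_{\kappa(j),\ell}$ entry by entry across the orthogonal splitting $\HH=\bigoplus^\perp[\HH_\ell\oplus\HH_{n-\ell}]$, so the two forms coincide.

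The hard part is exactly this last comparison, which is the content of Varchenko's theorem \cite{varch1}: that the leading asymptotic term of the topological cup-product pairing, transported through the period map and the Gauss--Manin connection, is the algebraic Grothendieck residue. The delicate issue is controlling the lower-order coefficients of $s(\omega_j)$ below the principal term and checking they contribute nothing modulo $\HH_0^{\prime}$; this is where the strictness of the $V$-filtration under $\partial_t^{-1}$ and the orthogonality of the primitive pieces of the PMHS are used. I would first verify the statement in the quasi-homogeneous case, where each $s(\omega_j)$ is a single monomial term and the pairing is computed explicitly, and then pass to the general isolated singularity either by the flatness of both pairings in a deformation or, more directly, by invoking the normal-form computations of Hertling \cite[Prop. 4.4 and 5.1]{hertl3}.
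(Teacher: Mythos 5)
The paper never proves Theorem \ref{thm:varchenko}: it is imported as a known result, ``essentially due to Varchenko \cite{varch1}'', in the truncated form of Hertling \cite[Prop.~4.4]{hertl3}, and everything downstream (Lemma \ref{lem:marc1}, Proposition \ref{prop:SaitoRes-hbase-sbase}, Theorem \ref{thm:firstpartthm}) is built on top of it. A blind proof would therefore have to reproduce Varchenko's actual argument, and yours does not: it is circular at the decisive step. You identify the induced form on $\HH_0^{\prime\prime}/\HH_0^{\prime}$ with $res_f$ by evaluating both forms on the Saito--Hertling basis, using $\langle s_i,s_j\rangle_{\HH}=\delta_{\kappa(i),j}$ on the topological side and $res_f([\omega_j],[\omega_\ell])=\delta_{\kappa(j),\ell}$ on the algebraic side. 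But the second identity is not an independent input. In this paper it is precisely \eqref{eqn:PShihj2_2} of Proposition \ref{prop:SaitoRes-hbase-sbase}, whose proof invokes both clauses of Theorem \ref{thm:varchenko} (the radical statement to kill the second summand in \eqref{eqn:proofPropPs}, and the identification with $res_f$ to pass from $K_f$ to $res_{f,0}$); in Hertling's work it is likewise a consequence of his Proposition 4.4 --- which \emph{is} the theorem you are asked to prove, and which is also your declared fallback (``invoking the normal-form computations of Hertling''). The topological normal form \eqref{eqn:matrixQ} for $S$ is legitimately independent (it comes from the PMHS), but the matching statement for $res_f$ is exactly the conclusion, so your comparison assumes what it must establish.

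The first half of your argument has the same problem in a subtler form. The inclusion of $\HH_0^{\prime}$ in the radical is, unwound to the untruncated picture, the statement that for all $a,b\in \Hpp$ the full pairing $K(a,b)$ has vanishing coefficients in the orders $\dt^{-1},\ldots,\dt^{-n}$, i.e.\ $K(\Hpp,\Hpp)\subset\dt^{-n-1}\mathbb{C}\{\{\dt^{-1}\}\}$; indeed $\langle\dt^{-1}a,b\rangle_{\HH}$ computes precisely the coefficient of $K(a,b)$ one order above the leading one retained by $\langle\ ,\ \rangle_{\HH}$. This is a Riemann--Hodge-type bilinear relation for the Brieskorn lattice: it is \emph{not} formal, and it cannot follow from $\dt^{-1}$-sesquilinearity together with bookkeeping of the $V$-filtration, because a generic $\mathbb{C}\{\{\dt^{-1}\}\}$-lattice inside $V^{>-1}$ violates it. What forces the vanishing is the geometric nature of $\Hpp$ --- its elements are period integrals of holomorphic $(n+1)$-forms, and the pole order of the pairing of two such geometric sections is bounded by Stokes-type estimates; this is where Varchenko's proof (and M.~Saito's and Hertling's reworkings \cite{MSaito}, \cite{hertl3}) does its real work. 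So ``an induction on the $V$-weight'' cannot close this gap. Your closing suggestion --- verify the quasi-homogeneous case and deform --- points toward a genuine strategy, but as stated it is a placeholder: the constancy of both pairings along a $\mu$-constant deformation, and the reduction of an arbitrary isolated singularity to the quasi-homogeneous case, are themselves substantial claims that you neither formulate precisely nor prove.
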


We may induce the  descending filtration  $V(V^{-1})$ to a  filtration $V(\HH_0^{\prime\prime})$ and then to a
filtration   $V(\Omega_f)$ in the Jacobian module. Explicitly: $[\omega]\in V^\beta(\Omega_f)$  if $\beta$ is the smallest rational number such that there is $\eta \in \Omega^n_{\mathbb{C}^{n+1},0}$ such
that $v_0(s(\omega + df\wedge \eta))= \beta$.

The spectrum $\{\alpha_1\leq\ldots \leq\alpha_\mu \}$ is formed by those rational numbers in $\langle-1,n\rangle\cap \QQ$ where $Gr_V^{\alpha_j}(\Omega_f)\neq0$. The spectrum can be interpreted as choosing logarithms of the eigenvalues
of the monodromy $M$: $\{e^{-2 \pi i \alpha_1},\ldots,e^{-2 \pi i \alpha_\mu}\}=\{e^{-2 \pi i \beta_1},\ldots,e^{-2 \pi i \beta_r}\}$,
and so they are finer invariants than the eigenvalues of the monodromy.
The choosing of the logarithms is  unveiled by the differential description.

\section{Normal form of the bilinear forms $res_f(f^j\ , \ )$ in Saito--Hertling basis}

In this section we will give a normal form for the higher bilinear forms $res_{f,0}(f^j\ ,\ ),\ 1\leq j\leq \ n+1$ in terms of the Saito-Hertling basis.

First, following  Hertling \cite{hertl3} (cf. \cite{MSaito}), one introduces the Hodge filtration in vanishing cohomology:
 $$H^n(X_\infty,\mathbb{C}) \supset F^0 \supset F^1 \supset \ldots \supset F^n \supset 0.
 $$
 It is compatible with the semi-simple decomposition in vanishing cohomology,
and it is defined for $p=0,\ldots,n$ by
$$F^{p}(H^n_{e^{-2 \pi i \alpha_{j}}})=\psi^{-1}\biggl(\frac{(V^{\alpha_j}\cap \partial_t^{n-p}H_0^{\prime\prime})+V^{>\alpha_j}}{V^{>\alpha_j}}\biggr).$$

\begin{Teo}[C. Hertling \cite{hertl2,hertl3}, cf. \cite{steenbrink0}] Let $f:(\mathbb{C}^{n+1},0)\rightarrow (\mathbb{C},0)$ be a holomorphic germ with an isolated singularity. Then, the vanishing
cohomology $H^n(X_\infty,\QQ)$ has a polarized mixed Hodge Structure (PMHS): $(F^*,W_*,S)$. This means that there is a PMHS of weight $n$ on $H^n(X_\infty,\QQ)_{\neq 1}$ and a PMHS of weight $n+1$ on $H^n(X_\infty,\QQ)_{1}$.
\end{Teo}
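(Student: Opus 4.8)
The plan is to verify the three defining conditions of a polarized mixed Hodge structure for the triple $(F^*,W_*,S)$: first, that $(F^*,W_*)$ is a mixed Hodge structure with $W_*$ the monodromy weight filtration of $N$; second, that $N$ is a morphism of type $(-1,-1)$ and an infinitesimal isometry for $S$; and third, that the forms induced on the primitive graded pieces satisfy the Hodge--Riemann bilinear relations. Throughout one treats the two summands separately, since by the eigenspace decomposition $H=\bigoplus_\lambda H_\lambda$ the whole structure splits $S$-orthogonally into its $\lambda=1$ and $\lambda\neq 1$ parts; the target weights $n$ and $n+1$ reflect exactly the shift by one forced by the extra factor of $N$ in the definition $Q_{X_t}(\bullet,\bullet)=Q(2\pi i N_V\phi\bullet,\phi\bullet)$ on the unipotent ($\lambda=1$) part.

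\textbf{Identifying $W_*$ and the MHS property.} I would take $W_*$ to be the weight filtration $W_\bullet(N)$ of $N=-\frac{1}{2\pi i}\log M_u$ introduced above, recentered at $n$ on $H_{\neq 1}$ and at $n+1$ on $H_1$; its defining properties $N W_k\subset W_{k-2}$ and the Lefschetz isomorphisms $N^j:Gr^W_{n+j}\xrightarrow{\sim} Gr^W_{n-j}$ are exactly formula \eqref{prim1} and the primitive decomposition recalled in Section \ref{sec:Nu_N}. The substantive point is that the asymptotic Hodge filtration $F^*$, defined through the Brieskorn lattice by $F^p(H^n_{e^{-2\pi i\alpha_j}})=\psi^{-1}\bigl((V^{\alpha_j}\cap\partial_t^{n-p}H_0'')+V^{>\alpha_j}\bigr)/V^{>\alpha_j}$, induces a pure Hodge structure of the correct weight on each $Gr^W_k$. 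For this I would invoke the compatibility of the Brieskorn lattice with the $V$-filtration: the operator $\partial_t^{-1}$ is bijective on $V^{>-1}$ and shifts the $V$-grading by one, so that $F^*$ satisfies $NF^p\subset F^{p-1}$ (Griffiths transversality in the limit), while the identification $s:\Omega_f\xrightarrow{\sim}H_0''/\partial_t^{-1}H_0''$ together with Theorem \ref{thm:varchenko} transports the graded pieces $Gr_V^{\alpha}\Omega_f$ onto the Hodge pieces of the limit. One then checks $Gr^W_k=\bigoplus_{p+q=k}F^p\cap\overline{F^q}$ on each summand, which is the content of the Scherk--Steenbrink construction of the limit mixed Hodge structure.

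\textbf{Polarization.} The hard part is the positivity, namely that $S(\,\cdot\,,N^{j}\,\cdot\,)$ is, up to the Weil operator and a sign depending on $\lfloor\beta_j\rfloor$, positive definite on each primitive space $P_{n+j}=\ker(N^{j+1}:Gr^W_{n+j}\to Gr^W_{n-j-2})$ (and analogously with $n$ replaced by $n+1$ on $H_1$). Here I would reduce to Schmid's theory for the variation of Hodge structure carried by the cohomology bundle $\underline{H}^n$ over the punctured disk: the fibres $H^n(V_t,\mathbb{C})$ of the projective family carry a polarized Hodge structure of weight $n$ with polarization the cup product $Q$ satisfying the Hodge--Riemann relations, and by Schmid's Orbit Theorem the limit mixed Hodge structure of this variation is polarized by $(N,Q)$. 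Passing from $V_t$ to the Milnor fibre $X_t$ through the exact sequence and the isomorphism $\phi:H^n(X_t,\QQ)\xrightarrow{\sim}H^n(V_t,\QQ)/\ker N$, the transported form $Q_{X_t}$ inherits the required definiteness on primitives, and Varchenko's Grothendieck duality (Theorem \ref{thm:varchenko}) guarantees that the abstract polarization $S$ built from $\langle\ ,\ \rangle_\HH$ agrees with the one coming from $Q$, so no discrepancy of normalization survives. I expect the genuine obstacle to be precisely this last transfer: controlling the signs and the factors of $2\pi i$ so that positivity on the nearby fibre survives the passage to the limit and the splitting into $\lambda=1$ and $\lambda\neq1$ parts, which is where Schmid's $SL_2$-orbit theorem (or Steenbrink's direct limit-of-cup-product computation) does the essential work and where the weight shift from $n$ to $n+1$ must be tracked with care.
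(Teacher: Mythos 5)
The paper itself contains no proof of this theorem: it is imported verbatim from Hertling \cite{hertl2,hertl3} (going back to Steenbrink \cite{steenbrink0}, Varchenko, Scherk--Steenbrink and M.~Saito), and the surrounding text only assembles the ingredients (the compactification of Section \ref{sec:4.1}, the asymptotic filtration $F$ defined through the Brieskorn lattice, the form $Q_{X_t}$). So your proposal can only be measured against that literature argument, and in outline you do follow the same route: $W$ is the monodromy weight filtration of $N$ recentered at $n$ on $H_{\neq 1}$ and at $n+1$ on $H_{1}$, $F$ is the Brieskorn-lattice filtration, the polarized limit mixed Hodge structure of the projective family $\{V_t\}$ comes from Schmid's $SL_2$-orbit theorem, and everything is transported to $H^n(X_\infty,\QQ)$ through the quotient map $\phi$.

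The genuine gap is the sentence ``the transported form $Q_{X_t}$ inherits the required definiteness on primitives.'' That is not an inheritance; it is precisely the point where the weight jumps from $n$ to $n+1$ and the form changes from $Q$ to $Q(2\pi i N_V\phi\,\cdot\,,\phi\,\cdot\,)$, and it does not follow from Schmid applied to $\{V_t\}$. What is needed, and what Hertling isolates as a separate lemma on polarized mixed Hodge structures, is: if $(H_1,F,W,N,Q)$ is a PMHS of weight $n$, then the quotient $H_1/\ker N$, equipped with the image filtration of $F$, with the weight filtration of the induced nilpotent map recentered at $n+1$ (an $sl_2$ computation shows this coincides with the quotient filtration of $W$), and with the form $S([a],[b]):=Q(a,Nb)$, is a PMHS of weight $n+1$. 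The verification rests on the fact that the primitive subspace of the quotient in weight $(n+1)+j$ is the isomorphic image of the primitive subspace of $H_1$ in weight $n+(j+1)$, so the Hodge--Riemann positivity demanded of $S$ there is literally the Hodge--Riemann positivity of $Q(\,\cdot\,,N^{j+1}\,\cdot\,)$ granted by the weight-$n$ polarization; this bookkeeping is exactly where the two weights and the two forms get reconciled, and your proof asserts it rather than proves it. Two smaller points of the same kind: Schmid's theorem takes a \emph{polarized} VHS as input, and cup product polarizes only primitive cohomology of $V_t$, so one must first note that $H_{\neq 1}\cong H^n(V_t,\mathbb{C})_{\neq 1}$ lies in the monodromy non-invariant, hence primitive, part; and your mixed-Hodge condition ``$Gr^W_k=\bigoplus_{p+q=k}F^p\cap\overline{F^q}$'' is correct only if $F^p$ and $\overline{F^q}$ denote the filtrations \emph{induced on} $Gr^W_k$ --- for the filtrations on $H$ itself this identity fails for non-split structures.
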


Now, following Hertling \cite{hertl3} (cf. M. Saito\cite{MSaito}), it is posible to choose a convenient basis of $\Omega_f$ such that their representatives whose principal terms into its asymptotical expansion are inside ${H}^{\prime\prime}_0\subset V^{>\alpha_1}\subset V^{>-1}$ form a Jordan basis for the nilpotent part of the monodromy. 

In fact, using the duality properties that the Deligne splitting inherits from the PMHS on $H^{n}(X_\infty,\QQ)$ we begin by choosing a $\mathbb{C}$-basis $A_1,\ldots,A_\mu$ of
$H^n(X_\infty,\mathbb{C})$ such that:
\begin{itemize}
\item[(a)] it corresponds with $s_1,\ldots,s_\mu$, $s_i\in C_{\alpha_i}$, $1\leq i\leq \mu $, by the relation

\begin{equation}\label{eqn:JordanBasisParaN}
\partial^{k_i}_ts_i=\psi (A_i),\hskip .5cm \hbox{ i.e. } L_{k_i}s_i=\psi^{-1}\partial^{k_i}_ts_i= A_i
\end{equation}

 where $k_i$ is such that $\alpha_i-k_i\in (-1,0]$, i.e., $s_i\in \HH_{k_i}$,
\item[(b)] $s_1,\ldots, s_\mu$
project onto a basis of $\bigoplus_{-1<\alpha<n}\,Gr^\alpha_V(\Hpp/\partial^{-1}_t\Hpp)\simeq \bigoplus_{-1<\alpha<n}\,Gr^\alpha_V(\Omega_f)$ satisfying that
\begin{equation}\label{eqn:dtNsj}
s_{\nu(i)}  = (Id-(\alpha_i+1)\partial_t^{-1})s_i=\partial^{-1}_tN\,s_i=N\partial^{-1}_t\,s_i
\end{equation}

where $\nu$ is such that

\begin{equation}\label{eqn:JordanBasis-N}
A_{\nu(j)}=\frac{-1}{2\pi \sqrt{-1}}N A_{j},\hskip 1cm A_{\mu+1}=0.
\end{equation}

Actually, we may suppose that $\nu$ coincides with $\nu_N$.
\item[(c)] there exists an involution $\kappa:\{1,\ldots,\mu\}\longrightarrow \{1,\ldots,\mu\}$
with $\kappa(i) = \mu+1-i$ if $\alpha_i\neq \frac{1}{2}(n-1)$ and $\kappa(i) = \mu+1-i$ or $\mu(i)=i$ otherwise and
satisfying the orthogonality relations
$\langle s_i,s_j\rangle_{\HH}=\delta_{\kappa(i),j}$.

Essentially this comes from the fact that
 \begin{equation}
\label{eqn:matrixQ}
\delta_{\kappa(i),l}= \left\{ \begin{array}{ll}
(-1)^{r_{l}}\Bigg(\displaystyle \frac{1}{2\pi\sqrt{-1}}\Bigg)^{n}\,Q\big(A_{i},A_{l}\big) & \textrm{if $\lambda_{\alpha_{i}}=\bar{\lambda}_{\alpha_{l}}\neq 1$}\\
\\
(-1)^{r_{l}+1}\Bigg(\displaystyle\frac{1}{2\pi\sqrt{-1}}\Bigg)^{n+1}\,Q\big(A_{i},A_{l}\big) & \textrm{if $\lambda_{\alpha_{i}}=\lambda_{\alpha_{l} }=1$,}
\end{array} \right.
\end{equation}
where $\alpha_{i},\alpha_{l}\in sp(f)$ and $\delta_{\kappa(i),l}$ is the Kronecker delta.
Hence the paring $\langle \, ,\, \rangle_{\HH}$ acquires the normal form $[\langle \, ,\, \rangle_{\HH}]=S_1$, where $S_1$ is  the anti-diagonal $\mu\times \mu$ matrix 
\[S_1=\left(
\begin{array}{ccccccc}
&    &   &  &        &&1  \\
&    &  &  &  &\cdots & \\
&    &&      &1        && \\
&&  & \left(
\begin{array}{cccc}
1 & 0 & \cdots &0\\
0 & 1 & \ddots&\vdots \\
\vdots & \ddots & \ddots & 0 \\
0 & \cdots & 0&1 \\
\end{array}
\right)
&  &  &\\
&       & 1  &  & & \\
& \cdots&  &  &  &  &\\
1&       &  &  &  &  &\\
\end{array}
\right).
\]
\item[d)] Following \cite[Lemma 5.2]{hertl3}, from $s_1,\ldots, s_\mu$ a $\mathbb{C}\{\{\dt^{-1}\}\}$-basis $h_1,\ldots,h_\mu$ for Brieskorn lattice $H^{\prime\prime}_0$ can be constructed
\begin{equation}\label{eqn:basesaito}
h_{i}\in\Hpp\cap (C_{\alpha_i}\oplus\sum_{\substack{j,p:\\\substack{ \alpha_{i}+p<\alpha_{j}\in sp(f)\\ p\geq 1}}}\mathbb{C}\cdot\dt^{p}s_{j}),\ \ i=1,\ldots,\mu;
\end{equation}
which induces the corresponding basis for $\HH^{\prime\prime}_0$ whose principal part of each $h_i$ is $s_i$. Finally, one  chooses forms $\eta_1,\ldots,\eta_\mu\in \Omega^{n+1}_{\mathbb{C}^{n+1},0}$ for which $h_1,\ldots,h_\mu$ are the corresponding images under $s$.  

\end{itemize}

 \subsection{The normal form of map multiplication  by $f$ in the Saito--Hertling basis}
 Following \cite[Proposition 5.4]{hertl3} we have a description for the map multiplication by $t$:
 $$t:\HH^{\prime\prime}_0\rightarrow \HH^{\prime\prime}_0$$
  that corresponds to the map multiplication by $f$ via the relation
 $s[f\bullet]=ts[\bullet]$.
  
 From this result we may obtain an induced normal form for the map multiplication by $f$, $$M_f:\Omega_f\longrightarrow\Omega_f, [\omega]\mapsto M_f[\omega]:=[f\omega],$$
 In fact, (choosing coordinates $(z_0,\ldots,z_n)$ on $(\mathbb{C}^{n+1},0)$) it will be determined by
$$[g dz_0\wedge dz_1\wedge \cdots \wedge dz_n]\mapsto [(fg) dz_0\wedge dz_1\wedge \cdots \wedge dz_n].$$
Hence, we may use that $A_f$ is isomorphic to $\Omega_f$, via the map $1\mapsto [dz_0\wedge dz_1\wedge \cdots \wedge dz_n],$ to obtain the following commutative diagram of $\mathcal{O}_{\mathbb{C}^{n+1},0}$-modules:
 $$\xymatrix @C=4pc @R=4pc{A_f\ar[r]^-{M_f}\ar[d]_-{\simeq}&A_{f}\ar[d]^-{\simeq}\\
\Omega_f\ar[r]^-{M_f}&\Omega_f
}$$
 and therefore we have up to this conjugation (and up to choosing holomorphic coordinates) the map given by \eqref{eqn:multiplicationbyf}. And therefore, we have that \begin{equation}\label{eqn:t-SaitoBasis}
f[\eta_i]=s^{-1}(th_{i})=s^{-1}(h_{\nu_{N}(i)})+\sum^{\mu_i}_{\substack{j=1}}c_{ij}s^{-1}(h_{j})\in s^{-1}(\mathbb{H}^{\prime\prime}_0/\HH^{\prime}_0)=\Omega_f,
\end{equation}
where $c_{ij}$ are constants determined by the spectral values in such a way that
\begin{displaymath}
c_{ij}:= \left\{ \begin{array}{ll}
0 &\textrm{if\quad$\alpha_{i}+1\geq\alpha_{i}$}\\
& \\
(\alpha_{j}-1-\alpha_{i})\;c^{(1)}_{ij} & \textrm{if\quad$\alpha_{i}+1<\alpha_{j}$}.
\end{array} \right.
\end{displaymath}
From \eqref{eqn:JordanBasis-N} we have that the $\mu_i\,'s$ depend on the length of the Jordan chain of $A_i$ with respect to $\nu_{N},$ that is, $\mu_i\leq\mu-\nu_{N}(i)<\mu$ for each $i=1,\ldots, \mu.$ 
Hence, this determines a $\mu\times\mu$ matrix $N_1$ using the terms of order greater than $\alpha_{i}+1$, $1\leq i\leq\mu$ according the expansions in \eqref{eqn:t-SaitoBasis} ; this is done by using the commutative diagram
\[
\xymatrix @R 5pc @C 5pc{\mathbb{C}^{\mu}\ar[r]&\mathbb{C}^{\mu}\\
	\frac{\mathbb{H}^{\prime \prime}_0}{\mathbb{H}^{\prime}_0}\ar[u]_-{[\underline{h}]}^-{\simeq}\ar[r]& \frac{\mathbb{H}^{\prime \prime}_0}{\mathbb{H}^{\prime}_0}\ar[u]^-{[\underline{h}]}_-{\simeq}
}
\]
given by $[\underline{h}]$-coordinates, in such a way that $N_1$ is equals to the transpose matrix of $(c_{ij}).$

Let us use the notation $K_f:=\langle \, ,\, \rangle_{\HH}$.

\begin{Def}
	\label{def:Atop}
	Let ${N}_{top}$ be the $\mu\times\mu$ matrix given by \[{N}_{top}=\big[t\big]_{[\underline{h}]}-N_1, \]
	where $[t]_{[\underline{h}]}$ is the induced $\mathbb{C}$-basis for $\dfrac{\mathbb{H}^{\prime \prime}_0}{\mathbb{H}^{\prime}_0}$
	coming from basis $\{h_j\}.$ Define the endomorphism
	$\mathbf{N}_1:H^{n}(X_\infty,\mathbb{C})$ such tat its $\underline{A}$-matrix expression is given by $N_1.$
  
\end{Def}

\begin{Lemma}\label{lem:marc1}
	Let $1\leq i,j\leq\mu$. For any integers $p,q$ such that $0\leq p\leq r_{i}$ and $q\leq r_{j}$,
	\begin{eqnarray*}
		K_f(\dt^{p}s_{i},\dt^{q}s_{j})&=&(-1)^{-q}K_f(s_{i},s_{j})\cdot\dt^{p+q}\\
		&=& (-1)^{-q}\delta_{\kappa(i),j}\cdot\dt^{-n-1+p+q}.
	\end{eqnarray*}
	where $r_{i},r_{j}$ are the levels of $s_{i},s_{j}$, respectively.
\end{Lemma}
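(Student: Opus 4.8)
The plan is to compute $K_f(\dt^{p}s_i,\dt^{q}s_j)$ directly from the defining formula of $\langle\ ,\ \rangle_{\HH}$, reading off the two displayed equalities from the sign and the $\partial_t$-degree that the computation produces. The second equality will then be nothing more than the normalization in item (c), i.e.\ \eqref{eqn:matrixQ}, recorded in the correct $\partial_t$-degree, while the first equality will be the transformation rule under shifting by powers of $\partial_t$.

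First I would reduce both arguments to their flat-section representatives. Recall that $s_i\in\HH_{k_i}$ with level $r_i=k_i$ (the unique $k_i$ with $\alpha_i-k_i\in(-1,0]$), and that by \eqref{eqn:JordanBasisParaN} one has $L_{k_i}s_i=\psi^{-1}\dt^{k_i}s_i=A_i$. The element $\dt^{p}s_i$ lies in level $k_i-p$, and the hypotheses $0\le p\le r_i$ and $q\le r_j$ guarantee $k_i-p\ge 0$ and $k_j-q\ge 0$, so these levels are legitimate. The key observation is that bringing $\dt^{p}s_i$ back down to level $0$ recovers the \emph{same} flat section:
$$\psi^{-1}\dt^{\,k_i-p}\big(\dt^{p}s_i\big)=\psi^{-1}\dt^{k_i}s_i=A_i,$$
and likewise $\psi^{-1}\dt^{\,k_j-q}(\dt^{q}s_j)=A_j$, independently of $p$ and $q$. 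Thus the cohomological content of the pairing does not change under the shifts; only the level-dependent data does.

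Next I would substitute into the definition of $\langle\ ,\ \rangle_{\HH}$, understood in its $\partial_t$-graded form in which arguments whose levels do not sum to $n$ are assigned a value in the appropriate power of $\partial_t$. With $\ell=k_i-p$ and $\ell'=k_j-q$ the factor $Q(A_i,A_j)$ and the normalizing constant $\tfrac{(-1)^{1+\lfloor\beta\rfloor}}{(2\pi i)^{n+1+\lfloor\beta\rfloor}}$ are both independent of $p,q$; only two things move. The sign attached to the second slot becomes $(-1)^{\ell'}=(-1)^{k_j-q}=(-1)^{-q}(-1)^{k_j}$, and the $\partial_t$-degree works out to $-n-1+(p+q)$ uniformly in the two cases $\lambda=1$ and $\lambda\ne 1$ (the $\lfloor\beta\rfloor$-dependent constant is precisely what forces the base degree to be $-n-1$ in both the weight-$n$ and the weight-$(n+1)$ situation). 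Factoring out $(-1)^{-q}$ and $\dt^{p+q}$ leaves the coefficient $(-1)^{k_j}\tfrac{(-1)^{1+\lfloor\beta\rfloor}}{(2\pi i)^{n+1+\lfloor\beta\rfloor}}Q(A_i,A_j)$, which by \eqref{eqn:matrixQ} is exactly $\langle s_i,s_j\rangle_{\HH}=\delta_{\kappa(i),j}$ (here the built-in sign $(-1)^{r_j}=(-1)^{k_j}$ of \eqref{eqn:matrixQ} absorbs the residual $(-1)^{k_j}$). This gives $K_f(s_i,s_j)=\delta_{\kappa(i),j}\dt^{-n-1}$ and hence both stated equalities simultaneously.

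The main obstacle is bookkeeping rather than conceptual. One must make precise the $\partial_t$-graded extension of the finite pairing $\langle\ ,\ \rangle_{\HH}$ so that the unmatched case $p+q\ne 0$ receives a well-defined value in $\dt^{-n-1+(p+q)}\mathbb{C}$, and then verify that the second-slot sign $(-1)^{\ell'}$ together with the normalization $(-1)^{r_j}$ of \eqref{eqn:matrixQ} combine to leave exactly $(-1)^{-q}$, while the $\lfloor\beta\rfloor$-correction keeps the base degree at $-n-1$ uniformly across $\lambda=1$ and $\lambda\ne 1$. Equivalently, one may present the argument abstractly: the displayed transformation is the $\partial_t^{-1}$-sesquilinearity of $K_f$, namely $K_f(\dt a,b)=\dt\,K_f(a,b)$ and $K_f(a,\dt b)=-\dt\,K_f(a,b)$, iterated $p$ times in the first slot and $q$ times in the second (producing the sign $(-1)^{q}=(-1)^{-q}$), with base value fixed by (c); I would use the explicit computation above to certify the signs in this paper's normalization.
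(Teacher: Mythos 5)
Your proposal is correct and follows essentially the same route as the paper: the paper's proof likewise reduces $K_f(\dt^{p}s_i,\dt^{q}s_j)$ and $K_f(s_i,s_j)$ to the pairing of the fully differentiated (flat) representatives $\dt^{r_i}s_i,\dt^{r_j}s_j$ via the sign rule for $\dt$ in each slot (your ``sesquilinearity''), picking up $(-1)^{r_j-q}$ and the degree shift $\dt^{-r_i-r_j+p+q}$, and then invokes \eqref{eqn:matrixQ} for the normalization $\delta_{\kappa(i),j}$. Your explicit tracking of the flat sections $A_i=\psi^{-1}\dt^{k_i}s_i$ and of the $\lfloor\beta\rfloor$-dependent constant is just a more spelled-out version of the paper's two displayed identities, so no substantive difference remains.
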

\begin{proof}
	From definition of $K_f$,
	\begin{equation}\label{eq:1}
	K_f(s_{i},s_{j})=(-1)^{r_{j}} K_f(\dt^{r_{i}}s_{i},\dt^{r_{j}}s_{j})\cdot\dt^{-r_{i}-r_{j}},
	\end{equation}
	where $r_{i},r_{j}$ are the levels of $s_{i},s_{j}$, respectively. In the same way,
	\begin{eqnarray}\label{eq:2}
	K_f(\dt^{p}s_{i},\dt^{q}s_{j})&=&(-1)^{r_{j}-q} K_f(\dt^{r_{i}-p}\dt^{p}s_{i},\dt^{r_{j}-q}\dt^{q}s_{j})\cdot\dt^{-r_{i}-r_{j}+p+q}\nonumber\\
	&=&(-1)^{r_{j}-q} K_f(\dt^{r_{i}}s_{i},\dt^{r_{j}}s_{j})\cdot\dt^{-r_{i}-r_{j}+p+q}
	\end{eqnarray}
	Finally, the claim follows from~(\ref{eq:1}),~(\ref{eq:2}), and~(\ref{eqn:matrixQ}).
\end{proof}

Let $\big[res_{f,0}(\bullet,\bullet)\big]_{[\underline{\eta}]}$ be the $[\underline{\eta}]$-matrix of the Grothendieck pairing which is induced by \eqref{eqn:basesaito}.

\begin{Prop}\label{prop:SaitoRes-hbase-sbase}
	\begin{enumerate}
		\item For any $i,j\in\{1,\ldots,\mu\}$,
		\begin{eqnarray}\label{eqn:PShihj2}
		K_f(h_{i},h_{j})&=&\delta_{\kappa(i),j}\cdot\dt^{-n-1}\in\mathbb{C}\cdot\dt^{-n-1}\nonumber\\
		&=&K_f(s_{i},s_{j}).
		\end{eqnarray}
		In particular,
		\begin{equation}
		\label{eqn:PShihj2_2}
		\big[res_{f,0}(\bullet,\bullet)\big]_{\underline{[\eta]}}=S_1.
		\end{equation}
		\item Suppose that $h_{\nu(i)}\neq 0$. Then, for any $j\in\{1,\ldots,\mu\}$,
		\begin{eqnarray}\label{eqn:PShihj3}
		K_f(h_{\nu(i)},h_{j})&=&K_f(s_{\nu(i)},s_{j})\nonumber\\
		&=&K_f(\dt^{-1}\widetilde{N}_{\alpha_{i}}s_{i},s_{j}).
		\end{eqnarray}
		\item The $(i,j)$ entry of matrix $[N]^{T}_{\underline{A}}S_1$ is equals to
		\begin{equation}
		\dt^{n+1}K_f(h_{\nu(i)},h_{j}).
		\end{equation}
		Or, equivalently,
		\begin{equation}
		\label{eqn:PShihj4}
		{N}_{top}=[N]_{\underline{A}},
		\end{equation}
		where $\big[N\big]_{\underline{A}}$ be the $\mu\times\mu$ constant $\underline{A}$-matrix associated to the operator $N$ satisfying \eqref{eqn:JordanBasis-N} and \eqref{eqn:matrixQ}.
		
	\end{enumerate}
\end{Prop}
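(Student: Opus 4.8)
The plan is to handle the three parts in sequence, exploiting that the good basis $\{h_i\}$ is triangular over the Jordan basis $\{s_i\}$ with respect to the $V$-filtration, together with the $\partial_t$-sesquilinearity recorded in Lemma \ref{lem:marc1}. For part (1) I would first write, using \eqref{eqn:basesaito}, $h_i = s_i + \sum a^{(i)}_{k,p}\,\partial_t^{p}s_k$ where each correction term satisfies $p\geq 1$ and $\alpha_k - p > \alpha_i$, hence has $V$-weight strictly larger than $\alpha_i$. Expanding $K_f(h_i,h_j)$ bilinearly and evaluating each summand $K_f(\partial_t^{p}s_k,\partial_t^{q}s_\ell)$ by Lemma \ref{lem:marc1} (nonzero only when $\ell=\kappa(k)$, landing in $\mathbb{C}\cdot\partial_t^{-n-1+p+q}$), the leading $\partial_t^{-n-1}$ coefficient comes solely from the $p=q=0$ term and equals $\delta_{\kappa(i),j}$; this already gives the second equality $K_f(h_i,h_j)=K_f(s_i,s_j)$. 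When $\kappa(i)=j$ one checks directly that the weight constraint $\alpha_k - p > \alpha_i$ combined with $\alpha_i + \alpha_j = n-1$ forces every correction contribution to vanish, so in this case $K_f(h_i,h_j)=\partial_t^{-n-1}$ by weight bookkeeping alone.

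The delicate point is the off-diagonal case $\kappa(i)\neq j$: there the leading term vanishes but individual higher-order summands $K_f(\partial_t^{p}s_k,\partial_t^{q}s_\ell)$ with $p+q\geq 1$ need not vanish for weight reasons, and their cancellation is exactly the normalization of the good basis in Hertling \cite[Lemma 5.2, Prop.\ 5.1]{hertl3}, which I would invoke to conclude $K_f(h_i,h_j)=\delta_{\kappa(i),j}\partial_t^{-n-1}$ with no higher-order terms. Granting this, \eqref{eqn:PShihj2_2} is immediate: by Theorem \ref{thm:varchenko} the form induced on $\HH_0^{\prime\prime}/\HH_0^{\prime}\simeq\Omega_f$ is $res_{f,0}$, read off as the leading $\partial_t^{-n-1}$ coefficient, so the $[\underline\eta]$-matrix of $res_{f,0}$ is $(\delta_{\kappa(i),j})=S_1$.

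For part (2) I would apply part (1) with $i$ replaced by $\nu(i)$ (allowed since $h_{\nu(i)}\neq 0$) to get $K_f(h_{\nu(i)},h_j)=K_f(s_{\nu(i)},s_j)$, and then substitute the Jordan relation $s_{\nu(i)}=\partial_t^{-1}N s_i$ from \eqref{eqn:dtNsj}, with $\widetilde N_{\alpha_i}$ denoting the suitably normalized $\partial_t^{-1}$-twist of $N$; no further computation is required. For part (3) I would evaluate both sides. Parts (1)--(2) give $\partial_t^{n+1}K_f(h_{\nu(i)},h_j)=\delta_{\kappa(\nu(i)),j}=\delta_{\nu(i),\kappa(j)}$, using that $\kappa$ is an involution. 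On the other side, from \eqref{eqn:JordanBasis-N} the $\underline A$-matrix of $N$ has entries $(N)_{ab}=-2\pi i\,\delta_{a,\nu(b)}$ and $(S_1)_{kj}=\delta_{k,\kappa(j)}$, whence $([N]^{T}_{\underline A}S_1)_{ij}=\sum_k (N)_{ki}\delta_{k,\kappa(j)}$ collapses to a multiple of $\delta_{\nu(i),\kappa(j)}$; the two agree once the factor $\tfrac{-1}{2\pi i}$ of \eqref{eqn:JordanBasis-N} is absorbed into $\widetilde N_{\alpha_i}$. Comparing the resulting matrix identity with Definition \ref{def:Atop}, where $[t]_{[\underline h]}=N_{top}+N_1$ splits multiplication by $f$ into its principal part and the higher-order coefficients $(c_{ij})$ of \eqref{eqn:t-SaitoBasis}, yields $N_{top}=[N]_{\underline A}$, which is \eqref{eqn:PShihj4}.

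The main obstacle is thus concentrated in the off-diagonal normalization of part (1): Lemma \ref{lem:marc1} and the $V$-weight triangularity of $\{h_i\}$ over $\{s_i\}$ settle the diagonal entries and the leading coefficients directly, but the vanishing of all off-diagonal and strictly-higher-order contributions genuinely requires the fine structure of Hertling's good basis rather than weight bookkeeping alone. Once part (1) is secured, parts (2) and (3) are formal manipulations with $\partial_t$-degrees, the involution $\kappa$, and the normalizing constant $\tfrac{-1}{2\pi i}$.
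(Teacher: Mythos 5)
Your overall architecture matches the paper's: parts (2) and (3) are treated as formal consequences of part (1) via \eqref{eqn:dtNsj}, the involution $\kappa$, and \eqref{eqn:matrixQ}, and part (1) is attacked by expanding $K_f(h_i,h_j)$ bilinearly from \eqref{eqn:basesaito} and evaluating each summand with Lemma \ref{lem:marc1}. Your handling of the diagonal case $\kappa(i)=j$ by $V$-weight bookkeeping is correct (and is a refinement the paper does not bother to make), and your expansion correctly keeps the cross terms with $p=0$ or $q=0$, which the paper's displayed expansion silently drops. The problem sits exactly where you yourself locate the delicacy: the off-diagonal case. There you give no argument, but defer to ``the normalization of the good basis in Hertling [Lemma 5.2, Prop.~5.1]''. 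As those results are imported into this paper, Proposition 5.1 supplies only the orthogonality of the principal parts $s_i$ (item (c), i.e.\ \eqref{eqn:matrixQ}), and Lemma 5.2 supplies only the existence and triangular shape \eqref{eqn:basesaito} of the $h_i$; neither asserts $K_f(h_i,h_j)=\delta_{\kappa(i),j}\dt^{-n-1}$, which is precisely what part (1) claims. So at the crux your proof is either circular (quoting the statement to be proved) or an appeal to a stronger external fact you have not pinned down; as written this is a genuine gap.

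The paper closes that gap with a uniform, elementary mechanism requiring no case distinction. By Lemma \ref{lem:marc1}, every correction term in the expansion is a multiple of $\dt^{-n-1+p+q}$ with $1\le p,q\le n$, so its exponent lies in the band from $-n+1$ to $n-1$, strictly above $-n-1$. On the other hand, the pairing $K_f$ restricted to $\HH_0^{\prime\prime}\times\HH_0^{\prime\prime}$ contains no powers of $\dt$ strictly above $\dt^{-n-1}$; this structural fact about the Brieskorn lattice is what the paper's proof invokes as part (i) of Theorem \ref{thm:varchenko} (following Hertling \cite[Prop.~4.4]{hertl3}). Since $K_f(h_i,h_j)$ and $K_f(s_i,s_j)$ can only involve the power $\dt^{-n-1}$, while the entire double sum of corrections involves only powers strictly above it, that double sum vanishes identically --- diagonal and off-diagonal alike. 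This single observation is what your proposal is missing; once it is in place, your case split becomes unnecessary, and the remainder of your argument (parts (2) and (3), the identification of the leading coefficient with $res_{f,0}$ via Theorem \ref{thm:varchenko}(ii), and the bookkeeping of the $-1/2\pi i$ normalization in part (3)) goes through essentially as the paper's does.
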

\begin{proof}
	Note that (\ref{eqn:PShihj3}) follows from (\ref{eqn:PShihj2}) and \eqref{eqn:dtNsj}. Also, \eqref{eqn:PShihj2_2} follows from \eqref{eqn:PShihj2} and (ii) in Theorem~\ref{thm:varchenko}. On the other hand, (3) follows from \eqref{eqn:PShihj3} and by noting that
	\begin{equation*}
	K_f(s_{\nu(i)},s_{l})=\left\{ \begin{array}{ll}
	(-1)^{r_{l}}\displaystyle\Bigg(\frac{1}{2\pi\sqrt{-1}}\Bigg)^{n}Q(NA_{i},A_{l})\cdot\dt^{-n-1} & \textrm{if ($\alpha_{i}-r_{i})+(\alpha_{l}-r_{l})=-1$}\\
	\\
	\\
	(-1)^{r_{l}+1}\cdot\Bigg(\displaystyle\frac{1}{2\pi\sqrt{-1}}\Bigg)^{n+1}Q(NA_{i},A_{l})\cdot\dt^{-n-1} & \textrm{if $(\alpha_{i}-r_{i})=(\alpha_{l}-r_{l})=0$}.\\
	\end{array} \right.
	\end{equation*}
	Finally, we will prove~\eqref{eqn:PShihj2}. Let $1\leq i,l\leq\mu$. Assuming
	\[h_{i}=s_{i}+\sum^{\mu}_{\substack{j=1\\p\geq 1\\ \alpha_{i}+p<\alpha_{j}}} c^{(p)}_{ij} \cdot\dt^{p}s_{j} \quad\textrm{ and }\quad h_{l}:=s_{l}+\sum^{\mu}_{\substack{l_{1}=1\\q\geq 1\\ \alpha_{l}+q<\alpha_{l_{1}}}} c^{(q)}_{ll_{1}} \cdot\dt^{q}s_{l_{1}},\]
	it follows that
	\begin{eqnarray*}
		K_f(h_{i},h_{j})&=&K_f\bigg(s_{i}+\sum^{\mu}_{\substack{j=1\\p\geq1\\ \alpha_{i}+p<\alpha_{j}}} c^{(p)}_{ij} \cdot\dt^{p}s_{j}\,,\,s_{l}+\sum^{\mu}_{\substack{l_{1}=1\\q\geq1\\ \alpha_{l}+q<\alpha_{l_{1}}}} c^{(q)}_{ll_{1}} \cdot\dt^{q}s_{l_{1}}\bigg)\\
		&=&K_f(s_{i},s_{l})+\sum^{\mu}_{\substack{j=1\\p\geq1\\ \alpha_{i}+p<\alpha_{j}}}\sum^{\mu}_{\substack{l_{1}=1\\q\geq1\\ \alpha_{l}+q<\alpha_{l_{1}}}} c^{(p)}_{ij}\cdot c^{(q)}_{ll_{1}} \cdot K_f\bigg(\dt^{p}s_{j}\,,\,\dt^{q}s_{l_{1}}\bigg).\\
	\end{eqnarray*}
	So, by equation~\eqref{eqn:matrixQ} and  Lemma~\ref{lem:marc1}
	\begin{eqnarray}\label{eqn:proofPropPs}
	K_f(h_{i},h_{j}) &=&\delta_{\kappa(i),l}\cdot\dt^{-n-1}+{}\nonumber\\
	&& {}+\sum^{\mu}_{\substack{j=1\\p\geq1\\ \alpha_{i}+p<\alpha_{j}}}\sum^{\mu}_{\substack{l_{1}=1\\q\geq1\\ \alpha_{l}+q<\alpha_{l_{1}}}} c^{(p)}_{ij}\cdot c^{(q)}_{ll_{1}} \cdot (-1)^{q} \delta_{\kappa(j),l_{1}}\cdot\dt^{-n-1+p+q}.
	\end{eqnarray}
	Since $1\leq p,q\leq n$, it satisfies that $-(n-1)\leq n+1-p-q\leq n-1$. So, by (i) in Theorem~\ref{thm:varchenko}, it follows that the second summand of \eqref{eqn:proofPropPs} is equal to zero.  This complete the proof.
\end{proof}

Proposition \ref{prop:SaitoRes-hbase-sbase} implies the following corollary.

\begin{Cor}\label{eq:matrixMf}
The matrix expression of $f$ in the basis $[\eta_1],\ldots,[\eta_\mu]\in \Omega_f$ is
$$[M_f]_{\underline{\eta}}=[N]_{\underline{A}}+N_1=N_{top}+N_1,$$ where $N$ is in canonical Jordan Form with respect to basis $\underline{A}=\{A_j\}$ and $N_1=(c_{ij})^{tr}$ accordingly to \eqref{eqn:JordanBasisParaN} and \eqref{eqn:t-SaitoBasis}  .
\end{Cor}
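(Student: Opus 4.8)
The plan is to recognize that this corollary is an assembly of three facts already in place, with essentially all of the analytic content residing in Proposition~\ref{prop:SaitoRes-hbase-sbase}. First I would exploit the compatibility of the period map $s$ with the two multiplication operators. Since $s[f\bullet]=t\,s[\bullet]$ and the forms $\eta_1,\ldots,\eta_\mu$ are by construction (item d) the preimages under $s$ of the Brieskorn basis $h_1,\ldots,h_\mu$, the map $s$ conjugates multiplication by $f$ on $\Omega_f$ into multiplication by $t$ on $\HH_0^{\prime\prime}/\HH_0^{\prime}$ expressed in the $[\underline{h}]$-coordinates. Hence the two matrices coincide, $[M_f]_{\underline{\eta}}=[t]_{[\underline{h}]}$, and the problem is reduced to identifying the right-hand side.

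Next I would invoke the expansion \eqref{eqn:t-SaitoBasis}, which writes $t\,h_i$ as the leading term $h_{\nu_N(i)}$ plus the correction $\sum_j c_{ij}h_j$ coming from the higher-order coefficients. By Definition~\ref{def:Atop} the correction part is collected precisely into $N_1=(c_{ij})^{\mathrm{tr}}$ and the leading part into $N_{top}$, so that $[t]_{[\underline{h}]}=N_{top}+N_1$ holds by construction. Combining with the previous step already yields the second displayed equality, $[M_f]_{\underline{\eta}}=N_{top}+N_1$.

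Finally I would apply part (3) of Proposition~\ref{prop:SaitoRes-hbase-sbase}, namely the identity \eqref{eqn:PShihj4}, $N_{top}=[N]_{\underline{A}}$, which pins the principal block to the constant Jordan matrix of $-\frac{1}{2\pi i}N$ via \eqref{eqn:JordanBasis-N}. Substituting this into $[M_f]_{\underline{\eta}}=N_{top}+N_1$ gives the first equality $[M_f]_{\underline{\eta}}=[N]_{\underline{A}}+N_1$, completing the chain.

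The only genuinely nontrivial link is the identification $N_{top}=[N]_{\underline{A}}$; once Proposition~\ref{prop:SaitoRes-hbase-sbase} is granted, the corollary requires no further computation, and the sole bookkeeping care is to keep $\nu$ aligned with $\nu_N$ as arranged in item (b). The point I would emphasize is conceptual rather than computational: this decomposition splits multiplication by $f$ into its topological leading part, the action of $-\frac{1}{2\pi i}N$ recorded by $N_{top}=[N]_{\underline{A}}$, and the analytic binding terms $N_1$ produced by the higher-order coefficients $c_{ij}$ of \eqref{eqn:t-SaitoBasis}; these are exactly the terms that glue the separate $N$-chains into the longer $f$-chains.
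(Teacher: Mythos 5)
Your proof is correct and follows essentially the same route as the paper, which deduces the corollary directly from Proposition~\ref{prop:SaitoRes-hbase-sbase}: the identification $[M_f]_{\underline{\eta}}=[t]_{[\underline{h}]}$ via the period map and the relation $s[f\bullet]=t\,s[\bullet]$, the tautological splitting $[t]_{[\underline{h}]}=N_{top}+N_1$ from Definition~\ref{def:Atop}, and the key identity $N_{top}=[N]_{\underline{A}}$ of \eqref{eqn:PShihj4}. The paper compresses all of this into the single assertion that the Proposition implies the corollary; your write-up makes the same chain of reductions explicit, correctly isolating \eqref{eqn:PShihj4} as the only nontrivial step.
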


Explicitly, we have the following normal form for the map multiplication by $f$:
\[[M_f]_{\underline{\eta}}=\begin{pmatrix}
	0&0&\cdots&0&0\\
	\hdotsfor[2]{5}\\
	n_{\nu(1)1}&0&&0&0\\
	\hdotsfor[2]{5}\\
	0&n_{\nu(2)2}&&0&0\\
	\hdotsfor[2]{5}\\
	0&0&\ldots&n_{\nu(\mu-1) \mu-1}&0\\
	\hdotsfor[2]{5}\\
	0&0&\ldots&0&n_{\nu(\mu) \mu}\\
	\hdotsfor[2]{5}\\
	c_{11}&c_{21}&\ldots&c_{\mu-1 1}&c_{\mu 1}\\
	c_{12}&c_{22}&\ldots&c_{\mu-1 2}&c_{\mu 2}\\
	\hdotsfor[2]{4}\\
	c_{1\mu_1}&c_{2\mu_2 }&\ldots&c_{ \mu \mu_{\mu-1}}&c_{\mu \mu_{\mu}}\\
\end{pmatrix}:=[N]_{\{A_j\}}+\mathbf{N}_1,\]
con
\[n_{\nu(i)i}=\begin{cases}
	1 \hskip .4 cm\textrm{si}\hskip .4 cm \nu(i)\neq \mu+1\\
	0  \hskip .4 cm\textrm{si}\hskip .4 cm \nu(i)= \mu+1.\\
\end{cases}
\]

\subsection{Main results}
 
   \begin{Teo}[First main result]\label{thm:firstpartthm}
   	There exist an isomorphism of $\mathbb{C}$--vector spaces $\varphi:\Omega_f\rightarrow H^{n}(X_{\infty},\mathbb{C})$ and an automorphism $J:H^{n}(X_{\infty},\mathbb{C})\rightarrow H^{n}(X_{\infty},\mathbb{C})$
   	   	such that:
  	\begin{enumerate}
  		\item[(a)] The bilinear forms $res_f$ on $\Omega_f$ and $Q(\bullet,J\bullet)$ on $H^n(X_{\infty},\mathbb{C})$ are equivalent, that is,
  		
  		$$res_f(\bullet,\bullet)=Q(\varphi\bullet,J\varphi \bullet).$$
  	\item[(b)] For $j=1,\ldots,n$, the $\mathbb{C}$-bilinear spaces 
  	\begin{align*}
  	(\Omega_f, res(f^{j}\bullet,\bullet))\mbox{ and } \biggl(H^{n}(X_\infty,\mathbb{C}), Q\bigl((N+\mathbf{N}_1)^{j}\bullet,J\bullet\bigr)\biggr)	
  	\end{align*}
  	are equivalent, that is,
  	$$res(f^{j}\bullet,\bullet)=Q\bigl((N+\mathbf{N}_1)^{j}\varphi\bullet,J\varphi\bullet\bigr).$$
  	\item[(c)] If $f$ has finite monodromy, then for $j=1,\ldots,n$, the $\mathbb{C}$-bilinear spaces 
  	\begin{align*}
  		(\Omega_f, res(f^{j}\bullet,\bullet)) \mbox{ and } \biggl(H^{n}(X_\infty,\mathbb{C}), Q\bigl((\mathbf{N}_1)^{j}\bullet,J\bullet\bigr)\biggr)
  	\end{align*}
  	are equivalent, that is,
  	$$res(f^{j}\bullet,\bullet)=Q\bigl(\mathbf{N}_1^{j}\varphi\bullet,J\varphi\bullet\bigr).$$
\end{enumerate}
    \end{Teo}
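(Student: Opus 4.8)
The plan is to construct both $\varphi$ and $J$ explicitly from the Saito--Hertling basis and then let the normal forms of Proposition~\ref{prop:SaitoRes-hbase-sbase} and Corollary~\ref{eq:matrixMf} reduce all three items to a single matrix comparison. First I would let $\varphi:\Omega_f\to H^{n}(X_\infty,\mathbb{C})$ be the linear isomorphism determined on bases by $\varphi([\eta_i]):=A_i$; this is an isomorphism since $\{[\eta_i]\}$ is a basis of $\Omega_f$ by item (d) of the basis construction and $\{A_i\}$ is a basis of $H^n(X_\infty,\mathbb{C})$. I would then define $J$ diagonally in the basis $\underline{A}$, using the two cases of \eqref{eqn:matrixQ}:
$$JA_l:=\epsilon_l A_l,\qquad \epsilon_l:=\begin{cases}(-1)^{r_l}\bigl(\tfrac{1}{2\pi\sqrt{-1}}\bigr)^{n} & \text{if }\lambda_{\alpha_l}\neq 1,\\ (-1)^{r_l+1}\bigl(\tfrac{1}{2\pi\sqrt{-1}}\bigr)^{n+1} & \text{if }\lambda_{\alpha_l}=1,\end{cases}$$
where $r_l$ is the level of $s_l$. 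Since $\epsilon_l$ depends only on the level $r_l$ and on the eigenvalue $\lambda_{\alpha_l}$ of $M_s$, the eigenspaces of $J$ are spanned by weight-graded pieces of the generalized monodromy eigenspaces, i.e. by the pieces governed by the mixed Hodge structure, as advertised in the introduction.

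With these choices item (a) is immediate. By \eqref{eqn:matrixQ} and the definition of $\epsilon_l$ one gets $Q(A_i,JA_l)=\epsilon_l\,Q(A_i,A_l)=\delta_{\kappa(i),l}$ for all $i,l$; the off-support cases match because $Q(A_i,A_l)=0$ whenever $l\neq\kappa(i)$. On the other side, \eqref{eqn:PShihj2_2} in Proposition~\ref{prop:SaitoRes-hbase-sbase} gives $res_f([\eta_i],[\eta_l])=(S_1)_{il}=\delta_{\kappa(i),l}$. Comparing, $res_f([\eta_i],[\eta_l])=Q(\varphi[\eta_i],J\varphi[\eta_l])$, i.e. $res_f(\bullet,\bullet)=Q(\varphi\bullet,J\varphi\bullet)$. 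The right-hand form is automatically symmetric, because its matrix in the $\underline{\eta}$-basis is the symmetric matrix $S_1$; this is the built-in consistency check that the case distinction of \eqref{eqn:matrixQ} and the symmetry type of $Q$ have been correctly absorbed into $J$.

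For item (b) the key identity is $\varphi\,M_f\,\varphi^{-1}=N+\mathbf{N}_1$. Indeed, Corollary~\ref{eq:matrixMf} gives $[M_f]_{\underline{\eta}}=[N]_{\underline{A}}+N_1$, and by Definition~\ref{def:Atop} the matrix $[N]_{\underline{A}}+N_1$ is precisely the $\underline{A}$-matrix of $N+\mathbf{N}_1$; as $\varphi$ carries the $\underline{\eta}$-basis to the $\underline{A}$-basis, this reads $\varphi M_f\varphi^{-1}=N+\mathbf{N}_1$, whence $\varphi M_f^{\,j}=(N+\mathbf{N}_1)^{j}\varphi$ for all $j$. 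Using $res(f^{j}\bullet,\bullet)=res_f(M_f^{\,j}\bullet,\bullet)$ together with item (a) applied to the first slot,
$$res(f^{j}a,b)=res_f(M_f^{\,j}a,b)=Q\bigl(\varphi M_f^{\,j}a,\,J\varphi b\bigr)=Q\bigl((N+\mathbf{N}_1)^{j}\varphi a,\,J\varphi b\bigr),$$
which is (b). Item (c) is then a degeneration: if $f$ has finite monodromy then $M_u=\mathrm{Id}$, so $N=-\tfrac{1}{2\pi\sqrt{-1}}\log M_u=0$, and (b) collapses to $res(f^{j}\bullet,\bullet)=Q(\mathbf{N}_1^{j}\varphi\bullet,J\varphi\bullet)$.

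The two matrix comparisons are routine; the genuinely delicate point I expect is the coherence of the single operator $J$. One must be sure that the same diagonal $J$ simultaneously reconciles the symmetry of $res_f$, the symmetry type of $Q$ (symmetric for $n$ even, antisymmetric for $n$ odd), and the weight-$n$ versus weight-$(n+1)$ splitting of the PMHS that produces the two cases of \eqref{eqn:matrixQ}. Concretely, one has to verify that $\epsilon_l$ is well defined, i.e. that the eigenvalue type of $\lambda_{\alpha_l}$ and the level $r_l$ are constant along the locus $l=\kappa(i)$ where $Q(A_i,A_l)\neq0$; this is exactly where the spectral symmetry $\alpha_{\kappa(l)}+\alpha_l=n-1$ and the orthogonality relation of item (c) of the basis construction, both coming from the polarized mixed Hodge structure, must be invoked.
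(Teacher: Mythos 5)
Your proposal is correct and follows essentially the same route as the paper's own proof: $\varphi$ is defined by $[\eta_j]\mapsto A_j$, $J$ is the diagonal operator in the $\underline{A}$-basis whose scalars are read off from the orthogonality relation \eqref{eqn:matrixQ}, item (a) is the comparison $res_f([\eta_i],[\eta_l])=\delta_{\kappa(i),l}=Q(\varphi[\eta_i],J\varphi[\eta_l])$, item (b) follows from the intertwining $\varphi M_f\varphi^{-1}=N+\mathbf{N}_1$ coming from $[M_f]_{\underline{\eta}}=[N]_{\underline{A}}+N_1$, and (c) from $N=0$ under finite monodromy. The only difference is cosmetic: the paper additionally checks that $J$ depends only on the Hodge filtration $F$ rather than on the chosen basis (the point your last paragraph gestures at), but this is not needed for the statement as formulated.
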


\begin{proof}
	From Proposition \ref{prop:SaitoRes-hbase-sbase} one has the orthogonality relations
	\begin{align}
		\label{eqn:lemaOrthSaitoBasis}
		\langle s_j,s_{\ell}\rangle_{\HH}=&\delta_{\kappa(j),\ell}=\langle [h_j],[h_{\ell}]\rangle_{\HH}=res_f([\eta_j],[\eta_{\ell}])\nonumber\\
		\langle s_{\nu_{N}(j)},s_{\ell}\rangle_{\HH}=&\delta_{\kappa(\nu_{N}(j)),\ell}=\langle [h_{\nu_{N}(j)}],[h_{\ell}]\rangle_{\HH}= res_f([\eta_{\nu_{N}(j)}],[\eta_{\ell}])
	\end{align}
	
	By definition of $\langle,\rangle_{\HH}$ on pairs $$(s_j,s_\ell)\in Gr_V^{\alpha_j}\times Gr_V^{-1-\alpha_j+n}=C_{(\alpha_j-k_j)+k_j}\times C_{-1-(\alpha_j-k_j)+(n-k_j)}=C_{\beta_j+k_j}\times C_{-1-\beta_j+(n-k_j)},$$ where $\beta_j=\alpha_j-k_j\in (-1,0]\cap \mathbb{Q}$, we have
	\begin{align*}
		\delta_{\kappa{(j)},\ell}=\langle s_j,s_\ell\rangle_{\HH}=&\displaystyle\frac{(-1)^{1+\lfloor\beta_j\rfloor}}{(2\pi i)^{n+1+\lfloor\beta_j\rfloor}}Q(L_{k_j}s_j,(-1)^{n-k_j}L_{n-k_j}s_\ell))\\
		=&\frac{(-1)^{1+\lfloor\beta_j\rfloor}}{(2\pi i)^{n+1+\lfloor\beta_j\rfloor}}Q(A_j,(-1)^{n-k_j}A_\ell).
	\end{align*}
	Therefore, we can be defined on each generalized eigenspace $H^{n}(X_\infty,\mathbb{C})_{e^{-2\pi i \alpha_j}}$ an automorphism $J_j$ by the condition
	$$J_j(A_{j}):=(-1)^{n-k_j}\frac{(-1)^{1+\lfloor\beta_j\rfloor}}{(2\pi i)^{n+1+\lfloor\beta_j\rfloor}}A_{j}\hskip 1cm \lfloor\beta_j\rfloor=-1,0.$$
	Define an automorphism in $H^n$ as the direct sum $J:=\oplus^{\mu}_{i=1}J_j$.
	The automorphism  $J$ does not depend on the basis $A_1,\ldots,A_\mu$ but only on the Hodge filtration $F$. Indeed, for each $\alpha_j\notin \ZZ$, by definition $A_j\in F^{p(j)}(H^n_{e^{-2 \pi i \alpha_{j}}})$ if and only if $s_j\in\frac{(V^{\alpha_j}\cap \partial_t^{n-p(j)}H_0^{\prime\prime})+V^{>\alpha_j}}{V^{>\alpha_j}}=\partial^{n-p(j)}_tGr^{\alpha_j}_V\Hpp$, and
	the fact that the $s_1,\ldots,s_\mu$ projects onto a basis of $Gr^{*}_V(H^{\prime\prime}_0/\partial^{-1}_tH^{\prime\prime}_0)$ implies that $Gr^{p(j)}_F(J)=(-1)^{n-k_j}\frac{1}{(2\pi i)^{n}}I$ and
	$k_j=n-p(j)$. Then, automorphism $J$ is given on $F^{p(j)}(H^{n}_{e^{-2\pi i \alpha_j}})\setminus F^{p(j)+1}(H^{n}_{e^{-2\pi i \alpha_j}})$ simply as $ J=(-1)^{p(j)}\frac{1}{(2\pi i)^{n+1}}I$. Similarly, for $\alpha_j\in \ZZ$, $J$ is given on $F^{p(j)}(H^{n}_{1})\setminus F^{p(j)+1}(H^{n}_{1})$ by $J_j=(-1)^{p(j)+1}\frac{1}{(2\pi i)^{n+1}}I$. Automorphism $J$ is such that $Q(\bullet,J\bullet)$ is symmetric on $H^{n}(X_\infty,\mathbb{C})$.
	
	Theorem \ref{thm:varchenko}, \eqref{eqn:t-SaitoBasis} and \eqref{eqn:lemaOrthSaitoBasis} prove the theorem by
	choosing bases $\{[\eta_j]:j=1,\ldots,\mu\}$ for $\Omega_f$ and $\{A_j:j=1,\ldots,\mu\}$ for $H^n(X_{\infty},\mathbb{C})$, and 	the
	isomorphism $\xymatrix{\Omega_f\ar[r]^-{\varphi}&H^{n}(X_{\infty},\mathbb{C})}$ is  given by
	\begin{equation}\label{eqn:DefnIsoUpsilon}
		\xymatrix@R=1pc{\Omega_f\ar[r]^-{s}&\dfrac{\mathbb{H}^{\prime \prime}_0}{\mathbb{H}^{\prime}_0}\ar[r]&H^{n}(X_{\infty},\mathbb{C})\\
			[\eta_{j}]\;\ar@{|->}[r]&[h_{j}]\;\ar@{|->}[r]&A_j,}
	\end{equation}
	for each $1\leq j\leq\mu$.	
\end{proof}

This theorem endows the higher bilinear forms $res_{f,0}(f^j\bullet,\bullet)$ with a normal form given by the right expressions. This will be illustrated in section \ref{sec:resMf} and the examples below.

\begin{Cor}[Second main result]\label{cor:mainthm}
	The Jordan Chains of multiplication by $f$ in $\Omega_f$ are obtained by binding of Jordan chains of $N$.
	\end{Cor}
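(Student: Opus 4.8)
The plan is to read the binding off the matrix decomposition $[M_f]_{\underline{\eta}}=[N]_{\underline{A}}+\mathbf{N}_1$ of Corollary~\ref{eq:matrixMf} and to follow the nilpotent orbit of $M_f$ as it threads through the $N$-Jordan chains. I would recall first that the $N$-chains are the orbits of $\nu_N$: a maximal chain is $\{A_i,A_{\nu_N(i)},\ldots,A_{\nu_N^{\ell}(i)}\}$ with $i\notin\mathrm{Im}(\nu_N)$ primitive and $\nu_N^{\ell+1}(i)=\mu+1$, and along it the spectral values increase by one at each step, $\alpha_{\nu_N^p(i)}=\alpha_i+p$, as recorded in the Introduction. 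Under the isomorphism $\varphi$ of Theorem~\ref{thm:firstpartthm} these correspond to the basis vectors $[\eta_i],[\eta_{\nu_N(i)}],\ldots$ of $\Omega_f$.

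I would then analyse the action of $M_f$ on a chain element $[\eta_k]$ with $k=\nu_N^p(i)$ by means of \eqref{eqn:t-SaitoBasis}. For $p<\ell$ the leading summand $[N]_{\underline{A}}$ sends $[\eta_k]$ to $[\eta_{\nu_N(k)}]$, advancing one step along the $N$-chain and raising the spectral value by exactly one, while the correction $\mathbf{N}_1$ contributes only terms $c_{kj}[\eta_j]$ with $\alpha_k+1<\alpha_j$, that is, terms of strictly higher spectral weight. Thus inside each $N$-chain the $V$-leading behaviour of $M_f$ coincides with that of $-\tfrac{1}{2\pi i}N$, so the internal structure of an $f$-chain reproduces an $N$-chain.

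The heart of the argument is the junction. When $k=\nu_N^{\ell}(i)$ sits at the top of an $N$-chain, the $[N]_{\underline{A}}$-part vanishes ($n_{\nu(k)k}=0$ because $\nu_N(k)=\mu+1$), so that $M_f[\eta_k]=\sum_j c_{kj}[\eta_j]$ with every nonzero $c_{kj}$ forcing $\alpha_k+1<\alpha_j$. Here the binding takes place: although the $N$-orbit has terminated, $M_f$ does not annihilate $[\eta_k]$; it produces an element whose principal term lies at a spectral value strictly above $\alpha_k+1$, namely the beginning of another $N$-chain. This is precisely the mechanism sketched in the Introduction---passing to the original representative $f\omega-(\alpha_k+1)\,df\wedge\eta$, the expected principal term $N^{\ell+1}A_i=0$ vanishes and forces the principal term up to the next spectral point.

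Finally I would conclude by nilpotency and strict monotonicity: since $M_f$ is nilpotent and each application of $M_f$ strictly raises the spectral value (by one inside an $N$-chain, by more than one at a binding), the Jordan chain of $M_f$ generated by an original element runs along one $N$-chain, binds at its top to the start of the next $N$-chain, and repeats until it terminates. Hence each $f$-Jordan chain is a concatenation of $N$-Jordan chains, as claimed. The main obstacle is to verify that every binding lands exactly at the start of a well-defined $N$-chain and that the resulting concatenation is an honest Jordan chain for $M_f$ rather than a mere $M_f$-orbit; this is controlled by the strict monotonicity of the spectral weights together with the normal form of $\mathbf{N}_1$ in Corollary~\ref{eq:matrixMf}, which guarantees that the binding targets do not collide and that the total length of the $f$-chains accounts for all $\mu$ basis vectors.
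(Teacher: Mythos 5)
Your mechanism is the right one --- the decomposition $[M_f]_{\underline{\eta}}=[N]_{\underline{A}}+\mathbf{N}_1$ with $\mathbf{N}_1$ strictly raising spectral values, and binding occurring at the tops of $N$-chains --- but your argument runs in the opposite direction from the paper's, and this reversal is where the gap lies. The paper starts from an honest Jordan basis of $(\Omega_f,M_f)$ adapted to the weight filtration $W_{\bullet}(f)$ (constructed in Section 2), expands a chain generator $s[v]_0=a_1h_1+\cdots+a_rh_r$ in the Saito--Hertling basis, and tracks its evolution under multiplication by $t$ via \eqref{eqn:SHbasis-t}; the Jordan-chain property is thus a \emph{hypothesis}, and all that must be exhibited is how the expansion threads through $N$-chains and binds when the leading terms $h_{\nu(i_\ell)}$ die. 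You instead start from the basis vectors $[\eta_j]$ and iterate $M_f$, so you must \emph{prove} that the resulting orbits are the Jordan chains of $M_f$ --- and that is exactly the step you do not supply. You flag it yourself (``an honest Jordan chain for $M_f$ rather than a mere $M_f$-orbit'') and then dispose of it by asserting that monotonicity and the normal form of $\mathbf{N}_1$ ``guarantee that the binding targets do not collide''; no such guarantee is proved, and nothing in Corollary~\ref{eq:matrixMf} gives it: the condition on $\mathbf{N}_1$ constrains only the \emph{spectral values} of its targets, not which basis vectors (hence which positions in which $N$-chains) occur, nor whether distinct chain-tops feed into overlapping targets.

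Your more specific claim that every binding lands ``exactly at the start of a well-defined $N$-chain'' is also unjustified, and the paper's own proof shows why one should not expect it: in its Step 0 analysis, the terms appearing in the expansion of an $f$-chain generator are explicitly allowed to be of four types --- in the middle of an $N$-chain ($A_j\in Im\,N\setminus Ker\,N$), at its end, isolated, or at its beginning. The implication $c_{kj}\neq 0\Rightarrow\alpha_k+1<\alpha_j$ says nothing about the position of $[\eta_j]$ within its $N$-chain, so $M_f$ applied to a chain-top can perfectly well produce a combination of vectors sitting in the middles of several different $N$-chains. The corollary survives in the paper because there one only follows a \emph{given} $f$-chain and records where its principal terms jump past $\alpha+1$, without ever claiming that full $N$-chains are concatenated end-to-start or that targets are disjoint. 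To repair your argument you would either have to reverse direction as the paper does, or prove a genuine non-collision and completeness statement for the $M_f$-orbits of the $[\eta_j]$; strict monotonicity of spectral values alone does not give either.
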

\begin{proof}
	In order to simplify the notation, from now on we set $\nu=\nu_N$ as in \eqref{eqn:JordanBasis-N}.
	
	Take a Jordan chain adapted to the wight filtration $W_{\bullet}(f)\subset\Omega_f\simeq A_f$ associated to the nilpotent operator multiplication by  $f$ (see subsection \ref{sec:AfMf}):
	
	$$
	\xymatrix{
		\frac{\Omega^{n+1}_{\mathbb{C}^{n+1},0}}{df\wedge \Omega^{n}_{\mathbb{C}^{n+1},0}}\supset C(v_{j}):
		\ar[r] [v]\ar[r]& [fv]\ar[r]& [f^2 v]\ar[r] & [f^3v]\ar[r]&\cdots\ar[r]& [f^{\ell-1}v]\ar[r]& [f^\ell v]\ar[r]&0.
	}
	$$
	We can see this Jordan chain via the isomorphism $s$ as
	
	$$
	\xymatrix{
		C(v):
		\ar[r] v+df\wedge \Omega^{n}_{\mathbb{C}^{n+1},0} \ar[r]& fv+df\wedge \Omega^{n}_{\mathbb{C}^{n+1},0}\ar[r]& f^2 v+df\wedge \Omega^{n}_{\mathbb{C}^{n+1},0}\ar[r]\cdots& f^\ell v + df\wedge \Omega^{n}_{\mathbb{C}^{n+1},0}\ar[r]&0\\
		s(C(v)):\ar[r] s[v]_{0}+{\mathbb{H}^{\prime}_0}\ar[r]& s[fv]_{0}+{\mathbb{H}^{\prime}_0}\ar[r]& s[f^2 v]_{0}+{\mathbb{H}^{\prime}_0} \cdots\ar[r]& s[f^\ell v]_{0}+{\mathbb{H}^{\prime}_0}\ar[r]&0.
	}
	$$
		
	If we assume that $\underline{v}$ is a Jordan basis of $(\Omega_f,f)$ we may follow
	its evolution with respect to the Saito-Hertling basis $$\underline{h}:=h_1,\ldots, h_\mu$$
	Since, 
	\begin{align}
		\label{eqn:SHbasis-t}
		h_\ell&=s_\ell+\sum_{\substack{j,p\\p\geq 1\\\alpha_l+p<\alpha_j}}c^{(p)}_{\ell j}\dt^{p}s_{j},\nonumber\\
		&\\
		th_\ell&=h_{\nu(\ell)}+\sum_{\substack{j\\\alpha_l+1<\alpha_j}}c^{(1)}_{\ell j}(\alpha_j-\alpha_\ell-1)h_j,\ \  \ell=1,\ldots,\mu.\nonumber
	\end{align}
	In fact, to simplify the idea we will suppose there is a Jordan chain for $M_f$ with length $3$:
	\begin{align*}
		C(v):\;s[v]_{0}\longrightarrow s[fv]_{0}\longleftarrow s[f^{2}v]_{0}\longrightarrow 0.
	\end{align*}
	Since $s[v]_0\in \frac{\Hpp}{\dt^{-1}\Hpp}$, there are unique constants $a_{1},\ldots,a_{\mu}\in \mathbb{C}$ such that $s[v]_0$ is an $\mathbb{C}$-linear combination
	$$s[v]_0=a_{1}h_1+\cdots+a_{\mu}h_\mu.$$
	Without loss of generality, we may keep with the summands that effectively contribute to the $\underline{h}$-linear expansion, that is,  we do not contemplate the constants $a_j=0$, and therefore, we assume that $a_{1},\ldots,a_{r}\neq0$, $1\leq r\leq \mu$ and
	
	\begin{equation}\label{eqn:paso0}
		s[v]_0=a_{1}h_{1}+\cdots+a_{r}h_{r}.
	\end{equation} 
	
	This linear combination will denote the first \textbf{Step 0} which encodes the beginnings of topological chains according to \eqref{eqn:SHbasis-t}, that is, we consider
	\begin{equation}
		\label{eqn:beginchains}
		s_{{1}},\ldots, s_{{r}}\,,
	\end{equation}
	which result from not to apply $\{\mathbf{f}\}$, that is, we apply the identity map  $\{\mathbf{f}\}^{0}:=I$ to $s[v]_0$. 
	
	Analyzing carefully {\bf Step 0}, one has that the begins of Jordan $f$-chains in  \eqref{eqn:beginchains} are already descended from someone, and therefore they may be of the following types:
	\begin{itemize}
		\item[i)] $s_{{j}}$ with  $A_j\in Im\, N\setminus Ker\,N$ is a bind of a chain:
		$$
		C(N):\bullet\rightarrow\bullet\cdots \rightarrow A_j\rightarrow\cdots\rightarrow\bullet\rightarrow 0	 
		$$
		\item[ii)] $s_{{j}}$ with  $A_j\in Im\, N\cap Ker\,N$:
		$$
		C(N):\bullet\rightarrow\bullet\cdots \rightarrow A_j\rightarrow 0.	 
		$$
		\item[iii)] $s_{{j}}$ with $A_j\in Ker\, N\setminus Im\, N$: 
		$$
		C(N): A_j\rightarrow 0.	 
		$$
		\item[iv)] $s_{{j}}$ with $A_j\notin Ker\, N \cup Im\,N$ that means that the $A_j$ is a begin of a $N$-chain:
		$$
		C(N):A_j\rightarrow\bullet\cdots \rightarrow \cdots\rightarrow\bullet\cdots \rightarrow 0.	 
		$$
	\end{itemize}
	
	On the other hand, {\bf Step 1} will codify the fact to kill the elements of $\ker(f)\simeq \ker(t)$  which appear in {\bf Step 0}. In fact, from \eqref{eqn:SHbasis-t} we assume that we have only an expansion with basis elements $h_{i_\ell}\notin\ker(t)$, $\ell=1,\ldots,r_0\leq r$ with the non zero constants $a_{i_1},\ldots,a_{i_{r_0}}$. Without loss of generality assume that $r_0=r$ and ${i_1}<\ldots<{i_{r_0}}$ whose order respects the ordered basis $\underline{h}$ as in \eqref{eqn:SHbasis-t}, are such that 
	\begin{align*}
		s[fv]_0=&a_{i_{1}}th_{i_{1}}+\cdots+a_{i_{ r}}th_{\ell_r}\\
		=&a_{i_1}h_{\nu(i_1)}+a_{i_1}h_{\nu(i_2)}+\cdots+a_{i_r}h_{\nu(i_r)}+\sum_{i_1,\ldots,i_\mu}\sum_{j(i_k)}\, d_{(i_1,\ldots,i_r)}\cdot h_{j(i_k)}\\
		=&a_{i_1}\left(s_{\nu(i_1)}+\sum_{\substack{j(i_1),p_1\\p_1\geq 1\\\alpha_{\nu(i_1)}+p_1<\alpha_{j(i_1)}}}c^{(p_1)}_{\nu(i_1) j(i_1)}\dt^{p_1}s_{j(i_1)}\right)
		+\cdots+a_{i_r}\left(s_{\nu(i_r)}+\sum_{\substack{j(i_r),p_r\\p(i_r)\geq 1\\\alpha_{\nu(i_r)}+p_r<\alpha_{j(i_r)}}}c^{(p_r)}_{\nu(i_r) j(i_r)}\dt^{p_r}s_{j(i_r)}\right)\\
		&+\sum_{i_1,\ldots,i_\mu}\sum_{j(i_k)}\, d_{(i_1,\ldots,i_r)}\cdot h_{j(i_k)} \\
		=&\sum_{\ell=1}^{r} a_{i_\ell} s_{\nu(i_\ell)} +\sum_{\ell=1}^{r}\sum_{\mathbf{Cdn}(i_\ell)} a_{i_\ell}c^{(p_\ell)}_{\nu(i_\ell) j(i_\ell)}\dt^{p_\ell}s_{j(i_\ell)}+\sum_{i_1,\ldots,i_\mu}\sum_{j(i_k)}\, d_{(i_1,\ldots,i_r)}\cdot h_{j(i_k)},
	\end{align*}
	where
	$\mathbf{Cdn}(i_\ell)$ means the additive condition $${\substack{j(i_\ell),p_\ell\\p_\ell\geq 1\\\alpha_{\nu(i_\ell)}+p_\ell<\alpha_{j(i_\ell)}}},\ \ \ell=1,\ldots,r,$$
	and the $d_{(i_1,\ldots,i_r)}$ are constants and each subindex $j(i_1),\ldots,j(i_r)$ denotes the dependence on $i_1,\ldots,i_r$, according to \eqref{eqn:SHbasis-t}, respectively.
		Continuing in this manner we prove the corollary.
\end{proof}

\subsection{ Canonical form for The Bilinear Form $res_f(M_f\bullet,\bullet)$}\label{sec:resMf}

We will explain a more details for the bilinear form of order $j=1$ according to Theorem \ref{thm:firstpartthm}. On  $H^n(X_\infty,\mathbb{C})$ one has a canonical (polarized) mixed Hodge structure whose polarization is induced by $Q_{X_t}$, that is, $Q\simeq Q_{X_t}$ and therefore, we have the induced orthogonal decomposition
\begin{equation}\label{eqnocXt1}
H^n(X_\infty,\mathbb{C}) = H^n(X_\infty,\mathbb{C})_1 \bigoplus H^n(X_\infty,\mathbb{C})_{ 1}^{\perp_Q}, 
\end{equation} 
$$ H^n(X_\infty,\mathbb{C})_{ 1}^{\perp_Q}:=H^n(X_\infty,\mathbb{C})_{-1} \bigoplus [\bigoplus_{Im \lambda>0} H^n(X_\infty,\mathbb{C})_{\lambda,{\overline{\lambda}}}]$$
which is defined over $\QQ$.

From results in Theorem \ref{thm:firstpartthm} we may interpret the mixed polarized Hodge structure as the one simply described
using the basis of Saito-Hertling. Associated to the function $f$ there is its spectrum,
$-1<\alpha_1\leq\ldots\leq\alpha_{\mu} < n $ who are logarithms of the eigenvalues of
the monodromy and a basis $A_1,\ldots, A_\mu$ of $ H^n(X_\infty,\QQ)$
such that $A_j$ is an eigenvector of $M_s$ with eigenvalue $e^{-2\pi i \alpha_j}$
forming a Jordan basis for the nilpotent operator $N$ (i.e. $N(A_j)=A_{\nu(j)}$)
and puts the
bilinear form $Q$ into canonical form: In fact, 
organizing the basis $\{A_j\}$ so that first
we put those corresponding to $\alpha_j$ an integer, and then the rest,
the matrix expression of $Q$ in this basis is, for $n$ even or odd, respectively:
\begin{align}
Q=\begin{pmatrix}( 2\pi i )^{n+1}\begin{pmatrix}0 & 1 \cr -1 & 0
\end{pmatrix} & 0 \cr 0 & (2 \pi i )^{n}\begin{pmatrix}0 && 1 \cr &I&\cr 1 && 0
\end{pmatrix}
\end{pmatrix},\mbox{ or } \
Q=\begin{pmatrix} (2 \pi i )^{n+1} \begin{pmatrix}0 & &1 \cr & I & \cr 1 & &0
\end{pmatrix} & 0 \cr 0 &  (2 \pi i )^{n} \begin{pmatrix}0 & 1 \cr -1 & 0
\end{pmatrix}
\end{pmatrix},
\end{align}
being the bilinear form $Q$  $(-1)^{n+1}$-symmetric in the first factor term of \eqref{eqnocXt1} and
$(-1)^{n }$-symmetric in the other $Q$-orthogonal factor. 
And the endomorphism $N$ has the normal form:

\[[N]_{\underline{A}}=\begin{pmatrix}
	0&0&\cdots&0&0\\
	\hdotsfor[2]{5}\\
	n_{\nu(1)1}&0&&0&0\\
	\hdotsfor[2]{5}\\
	0&n_{\nu(2)2}&&0&0\\
	\hdotsfor[2]{5}\\
	0&0&\ldots&n_{\nu(\mu-1) \mu-1}&0\\
	\hdotsfor[2]{5}\\
	0&0&\ldots&0&n_{\nu(\mu) \mu}\\
	\hdotsfor[2]{5}\\
	0&0&\ldots&0&0\\
	0&0&\ldots&0&0\\
	\hdotsfor[2]{4}\\
	0&0&\ldots&0&0\\
\end{pmatrix}\]
where
\[n_{\nu(i)i}=\begin{cases}
	1 \hskip .4 cm\textrm{si}\hskip .4 cm \nu(i)\neq \mu+1\\
	0  \hskip .4 cm\textrm{si}\hskip .4 cm \nu(i)= \mu+1,\\
\end{cases}.
\]

where the columns with a one generate the Image of $[N]$ en the cero columns generate $\ker[N].$

The Hodge flags $F^kH^n(X_\infty,\mathbb{C})_1$ and $F^kH^n(X_\infty,\mathbb{C})_{\neq 1}$ consists of those vector spaces whose basis is the set of  $A_j$ with
$\alpha_j>k$, and the weight filtration comes from  the Jordan block structure of $N$ codified in $\nu$.

On the other hand, the Saito-Hertling basis comes also with the selection of $\eta_1,\ldots,\eta_\mu \in \Omega^{n+1}_{\mathbb{C}^{n+1},0}$ satisfying  that the principal
term in the asymptotic expansion of $s(\eta_j)$ is  $ \partial^{-k_i}_tt^{\alpha_j + N}A_j$ corresponding to the flat section $$\left(((\alpha_j-k_j+1)I-\frac{1}{2 \pi i}N)\cdots ((\alpha_j-k_j+k_j)I-\frac{1}{2 \pi i}N)\right)^{-1}A_j\in H^{n}(X_{\infty},\mathbb{C})_{e^{-2\pi i \alpha_j}}$$ 
where $k_j\in \mathbb{Z}_>0$ and $\alpha_j-k_j=\beta_j\in(0,-1]\cap\mathbb{Q}$
(see equations \eqref{eqn:dt} and \eqref{eqn:JordanBasisParaN}). Its classes $\{[\eta_j]\}$ form a basis of the Jacobian module $\Omega_f$.
In the basis $\{[\eta_j]\}$,
Grothendieck bilinear form receives the corresponding expressions:

$$[res_{f,0}]=\begin{pmatrix}\begin{pmatrix}0 & 1 \cr  1 & 0
\end{pmatrix} & 0 \cr 0 & \begin{pmatrix}0 && 1 \cr &I&\cr 1 && 0
\end{pmatrix}
\end{pmatrix}
\hskip 5mm,\hbox{or}\hskip 5mm
[res_{f,0}]=\begin{pmatrix} \begin{pmatrix}0 & &1 \cr & I & \cr 1 & &0
\end{pmatrix} & 0 \cr 0 &   \begin{pmatrix}0 & 1 \cr  1 & 0
\end{pmatrix}
\end{pmatrix}$$
If we denote by $S$ this matrix, then the relation between $S$ and $Q$ can be expressed as $S= QJ$ where the matrix of authomorphism  $J$ becomes of the form:
$$[J]=\begin{pmatrix}
J_1&0\cr 0 &J_{\neq1}
\end{pmatrix}:=\begin{pmatrix}\frac{1}{(2\pi i)^{n+1}}\begin{pmatrix}1 & 0 \cr  0 & -1
\end{pmatrix} & 0 \cr 0 &\frac{1}{(2\pi i)^{n}} \begin{pmatrix}1 & 0\cr 0 & 1
\end{pmatrix}
\end{pmatrix}
\hbox{ or }
[J]=\begin{pmatrix}\frac{1}{(2\pi i)^{n+1}}\begin{pmatrix}1 & 0\cr 0 & 1
\end{pmatrix} & 0 \cr 0 & \frac{1}{(2\pi i)^{n}}\begin{pmatrix}
1 & 0 \cr  0 & -1
\end{pmatrix}
\end{pmatrix}
$$
and  $J_1$ ( resp. $J_{\neq 1}$) is $\frac{1}{(2\pi i)^{n+1}}Id$ (resp. $\frac{1}{(2\pi i)^{n}}Id$ )  on the basis elements corresponding to even elements of the Hodge flag, and $J_1$ ( resp. $J_{\neq 1}$) is $\frac{-1}{(2\pi i)^{n+1}}Id$ (resp. $\frac{-1}{(2\pi i)^{n}}Id$ ) on
the basis elements corresponding to odd elements.

From Theorem \ref{thm:firstpartthm}, for any $u\in H^{n}(X_{\infty},\mathbb{C})$, $[u]\in\mathbb{C}^{\mu}$ denotes the $\underline{A}$-coordinate column vector associated to $u$. Hence, given $u\in H^{n}(X_{\infty},\mathbb{C})$, there exists a unique $\omega:=\varphi^{-1}(u)\in\Omega_f$ such that,  in term of coordinates, $[u]=[\omega]\in\mathbb{C}^{\mu}$.
Then, we have the following bilinear forms $$\xymatrix{B^{top},B^{alg}:H^{n}(X_{\infty},\mathbb{C})\times H^{n}(X_{\infty},\mathbb{C})\ar[r]&\mathbb{C}}$$
defined respectively by
\begin{equation}\label{eqn:B0}
	\xymatrix@R=1pc{(u,v)\;\ar@{|->}[r]&[u]^{T}\cdot(N_{top})^{T} S J\cdot [v],
	}
\end{equation}
\begin{equation}\label{eqn:B1}
	\xymatrix@R=1pc{(u,v)\;\ar@{|->}[r]&[u]^{T}\cdot(N_1)^{T} S J\cdot [v].
	}
\end{equation}

What Theorem \ref{thm:firstpartthm} says is that the bilinear form (of order one) is described in terms of the bilinear forms $B^{top},B^{alg}$ in such a way that for any $\omega,\eta\in\Omega_f$:

\begin{equation*}
res_{f,0}\big(f\omega\,,\,\eta\big)=B^{top}\big(\varphi(\omega),\varphi(\eta)\big)+B^{alg}\big(\varphi(\omega),\varphi(\eta)\big).
\end{equation*}

Equivalently, for any $u,v\in H^{n}(X_{\infty},\mathbb{C})$:
\begin{equation}\label{eqn:ordenuno}
	res_{f,0}\big(f\,\varphi^{-1}(u)\,,\,\varphi^{-1}(v)\big)=B^{top}(u,v)+B^{alg}(u,v).
\end{equation}

The bilinear form $B^{top}$ (resp. $B^{alg}$) will be called the topological (resp. algebraic) part of the bilinear form.

Let $\xymatrix{Gr_{V}\Omega_f\ar[r]&H^{n}(X_{\infty},\mathbb{C})}$ be the $\mathbb{C}$-isomorphism defined in such a way that
\[\xymatrix@C=4pc@R=1pc{Gr_{V}\Omega_f\ar[r]^-{s}_-{\simeq}&Gr_{V}\left(\dfrac{\mathbb{H}^{\prime \prime}_0}{\mathbb{H}^{\prime}_0}\right)\ar[r]^-{\Psi}_-{\simeq}&H^{n}(X_{\infty},\mathbb{C})\\
	gr_V[\eta_{j}]\;\ar@{|->}[r]&[s_{j}]\;\ar@{|->}[r]& A_j,} 
\]
for each $1\leq j\leq\mu$. Then we obtain the following result which has a graded flavor, where we recover a Varchenko's Lemma~\cite{varch2} that the maps $Gr_{V}\{f\}$ and $N$ have the same Jordan canonical normal form and also a consequence by looking at the bilinear form $Gr_{V} \,res_{f,0}({f}\bullet,\bullet)$ which is induced on the graded space $Gr_V\Omega_f$.
\begin{Cor}
	\label{cor:varchenkoslema}
	If we restrict to the graded space, with respect to the V-filtration, $Gr_{V}\big(\mathbb{H}^{\prime \prime}_0/\mathbb{H}^{\prime}_0\big)$ and consider the graded endomorphism $Gr_{V}\{f\}$ on $Gr_{V}\Omega_f$, then
	
	$$\big[Gr_{V}\{f\}\big]_{\underline{s}}=\big[N\big]_{\underline{A}},$$
	and for any $u,v\in H^{n}(X_{\infty},\mathbb{C})$:
	$$res_{f,0}\big(Gr_{V}\{f\}(\Psi \circ s)^{-1}(u)\,,\,(\Psi \circ s)^{-1}(v)\big)=B^{top}(u,v).$$
	where $\underline{s}$ is the basis of $Gr_{V}\big(\mathbb{H}^{\prime \prime}_0/\mathbb{H}^{\prime}_0\big)$ induced by $\{s_{l}\}_{1\leq l \leq\mu}$.
\end{Cor}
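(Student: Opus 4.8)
The plan is to prove the two assertions separately, deducing both from the explicit normal form of multiplication by $f$ established in Corollary~\ref{eq:matrixMf} together with the vanishing of the higher-order terms upon passing to $Gr_V$. The unifying principle is that $N_1$ records exactly the part of $[M_f]_{\underline{\eta}}$ that strictly increases the $V$-filtration, so it disappears in the graded object.

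First I would establish the graded identity $[Gr_V\{f\}]_{\underline{s}}=[N]_{\underline{A}}$. The starting point is the relation \eqref{eqn:SHbasis-t}, namely $th_\ell=h_{\nu(\ell)}+\sum_{\alpha_\ell+1<\alpha_j}c^{(1)}_{\ell j}(\alpha_j-\alpha_\ell-1)h_j$. The principal part of $h_\ell$ is $s_\ell\in C_{\alpha_\ell}$, multiplication by $t$ raises the $V$-weight by one, and the principal part of $h_{\nu(\ell)}$ is $s_{\nu(\ell)}\in C_{\alpha_\ell+1}$. Every remaining summand $h_j$ satisfies $\alpha_j>\alpha_\ell+1$, hence lies in $V^{>\alpha_\ell+1}$ and dies in the graded piece $Gr_V^{\alpha_\ell+1}$. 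Therefore $Gr_V\{f\}([s_\ell])=[s_{\nu(\ell)}]$, so the matrix of $Gr_V\{f\}$ in the basis $\underline{s}$ is $(\delta_{j,\nu(j)})$, which is precisely the matrix $[N]_{\underline{A}}$ of the nilpotent operator in its Jordan basis. This recovers Varchenko's lemma.

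For the second assertion I would compare the full bilinear form with its graded companion. By Corollary~\ref{eq:matrixMf} the matrix of $M_f$ in $\underline{\eta}$ splits as $N_{top}+N_1$ with $N_{top}=[N]_{\underline{A}}$ and $N_1=(c_{ij})^{tr}$, and by Theorem~\ref{thm:firstpartthm} (the case $j=1$) the associated bilinear form decomposes as $res_{f,0}(f\bullet,\bullet)=B^{top}+B^{alg}$, the summands being read off from $N_{top}$ and $N_1$ as in \eqref{eqn:B0} and \eqref{eqn:B1}. The key observation is that the entries of $N_1$ are supported exactly on the indices $(i,j)$ with $\alpha_j>\alpha_i+1$; these are the contributions coming from the tail $\sum_{\alpha_i+1<\alpha_j}c^{(1)}_{ij}(\cdots)h_j$ of \eqref{eqn:t-SaitoBasis}, which lie in $V^{>\alpha_i+1}$. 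Consequently, when $f\cdot(\Psi\circ s)^{-1}(u)$ is computed in $Gr_V$, only the $N_{top}$-part survives, so the induced graded pairing satisfies $Gr_V\,res_{f,0}(f\bullet,\bullet)=B^{top}$; evaluating on the basis $A_j\leftrightarrow gr_V[\eta_j]$ and using part (3) of Proposition~\ref{prop:SaitoRes-hbase-sbase}, where $K_f(s_{\nu(i)},s_l)$ is identified with the entries of $[N]^T_{\underline{A}}S_1$, yields the stated formula.

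The main obstacle will be the careful bookkeeping that guarantees the $B^{alg}$-part genuinely vanishes in the graded pairing. Concretely, one must verify that whenever an $N_1$-term $c^{(p)}_{ij}\dt^{p}s_j$ is paired under $K_f$ against a basis element, the resulting $V$-degree is strictly larger than that of the corresponding $N_{top}$-term, so that it is annihilated by $gr_V$. This is already the content of the degree estimate $-(n-1)\le n+1-p-q\le n-1$ combined with part (i) of Theorem~\ref{thm:varchenko}, used at the end of the proof of Proposition~\ref{prop:SaitoRes-hbase-sbase}; hence the corollary follows once the filtration degrees on both factors are matched, which is the only place where genuine care is needed.
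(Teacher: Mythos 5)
Your proposal is correct and takes essentially the same approach as the paper, which states Corollary \ref{cor:varchenkoslema} without a separate proof precisely because it is the chain you reconstruct: by Corollary \ref{eq:matrixMf} the matrix of $M_f$ is $N_{top}+N_1$, the $N_1$-entries (supported where $\alpha_j>\alpha_i+1$) lie in $V^{>\alpha_i+1}$ and die under $gr_V$, and the surviving principal-part pairing $\delta_{\kappa(\nu(i)),l}$ is identified with the entries of $[N]^{T}_{\underline{A}}S_1$, i.e.\ with $B^{top}$, by Proposition \ref{prop:SaitoRes-hbase-sbase}. Your closing remark correctly locates the only delicate point (vanishing of the cross terms coming from the tails $c^{(p)}_{ij}\partial_t^{p}s_j$) in the degree estimate already carried out at the end of the proof of Proposition \ref{prop:SaitoRes-hbase-sbase}.
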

If the germ $f$ has finite monodromy, then $N=0$ and hence $B^{top}=0.$ Hence, we have next corollary which shows that the bilinear form $res_{f,0}(f\bullet,\bullet)$ has a little more information than the topological one given by the bilinear form $B^{top}.$ In the examples below we will illustrate this aspect.

\begin{Cor}
	If $f$ has finite monodromy, then
	\[res_{f,0}\bigg(f\omega\,,\,\eta\bigg)=B^{alg}\bigg(\varphi(\omega),\varphi(\eta)\bigg).\]
	Equivalently, for any $u,v\in H^{n}(X_{\infty},\mathbb{C})$:
	\begin{equation}\label{eqn:cor_res_BoplusB1}
		res_{f,0}\bigg(f\,\varphi^{-1}(u)\,,\,\varphi^{-1}(v)\bigg)=B^{alg}(u,v).
	\end{equation}
\end{Cor}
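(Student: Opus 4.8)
The plan is to recognize this corollary as the immediate specialization of the order-one bilinear form decomposition~\eqref{eqn:ordenuno} (part (b) of Theorem~\ref{thm:firstpartthm} with $j=1$) to the case in which the topological part degenerates. First I would unwind the definition $N:=-\frac{1}{2\pi i}\log M_u$, where $M=M_sM_u$ is the factorization of the monodromy into its semisimple and unipotent parts. The hypothesis that $f$ has finite monodromy says exactly that $M$ has finite order; a linear map of finite order is semisimple, so its unipotent factor is trivial, $M_u=Id$, and consequently $N=0$.

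Next I would propagate this vanishing through the two matrices that assemble the order-one form. By part (3) of Proposition~\ref{prop:SaitoRes-hbase-sbase}, equation~\eqref{eqn:PShihj4}, one has $N_{top}=[N]_{\underline{A}}$; since $N=0$ forces $[N]_{\underline{A}}=0$, we obtain $N_{top}=0$. Substituting $N_{top}=0$ into the definition~\eqref{eqn:B0} of the topological part, $B^{top}(u,v)=[u]^{T}(N_{top})^{T}SJ\,[v]$, the product collapses and $B^{top}\equiv 0$ as a bilinear form on $H^n(X_\infty,\mathbb{C})\times H^n(X_\infty,\mathbb{C})$.

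Finally I would feed $B^{top}=0$ back into~\eqref{eqn:ordenuno}, namely $res_{f,0}(f\,\varphi^{-1}(u),\varphi^{-1}(v))=B^{top}(u,v)+B^{alg}(u,v)$, leaving $res_{f,0}(f\,\varphi^{-1}(u),\varphi^{-1}(v))=B^{alg}(u,v)$; setting $u=\varphi(\omega)$ and $v=\varphi(\eta)$ recovers the equivalent form $res_{f,0}(f\omega,\eta)=B^{alg}(\varphi(\omega),\varphi(\eta))$. I expect no genuine obstacle here: all the analytic content is already encoded in Theorem~\ref{thm:firstpartthm} and Proposition~\ref{prop:SaitoRes-hbase-sbase}, so the proof is purely formal. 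The single point meriting a sentence is the implication ``finite monodromy $\Rightarrow N=0$''; it is also worth noting that $B^{alg}$ itself need not vanish, since the constants $c_{ij}$ forming $N_1$ are governed by gaps in the spectrum rather than by $N$, which is precisely why $res_{f,0}(f\bullet,\bullet)$ retains strictly more information than the topological datum even when the monodromy is finite.
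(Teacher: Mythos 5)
Your proof is correct and takes essentially the same route as the paper: the paper justifies this corollary in the sentence immediately preceding it, observing that finite monodromy forces $N=0$ and hence $B^{top}=0$ (via $N_{top}=[N]_{\underline{A}}$ from \eqref{eqn:PShihj4} and the definition \eqref{eqn:B0}), so that the decomposition \eqref{eqn:ordenuno} --- equivalently part (c) of Theorem \ref{thm:firstpartthm} with $j=1$ --- collapses to the claimed identity. Your supplementary observations (finite order implies semisimplicity so $M_u=Id$, and $B^{alg}$ need not vanish because $N_1$ is governed by spectral gaps rather than by $N$) are exactly the points the paper makes and illustrates in its examples.
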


\section{Examples}
We will  give examples illustrating Theorem \ref{thm:firstpartthm} through Corollary \ref{eq:matrixMf} and, equations \eqref{eqn:B0}, \eqref{eqn:B1} and \eqref{eqn:ordenuno}; some calculations have been done using the computer algebra software SINGULAR~\cite{singular} 
 by means of libraries  {\bf gmssing.lib} \cite{MSch1} and {\bf mondromy.lib} \cite{MSch2}.

\begin{Ex}
 Let $f:(\mathbb{C}^{2},0)\lrar(\mathbb{C},0)$ be the germ of Isolated Hypersurface Singularity given by the semi-quasi-homogeneous polynomial $f=x^{5}+y^{6}+x^{4}y$; hence (using singular \cite{MSch1} with command {\bf monodromy}) we have that $M_f\neq 0$, but $M^{2}_{f}\equiv 0$; $f$ has finite monodromy, i.e., $N\equiv 0$. The Milnor number is $\mu=19$ and the spectrum $sp(f)$ is
  \[\xymatrix@R=1pc@C=.01pc{-5/8,&-11/24,&-5/12,&-7/24,&-1/4,&-5/24,&-1/8,&-1/12,&-1/24,&\\
  0,&1/24,&1/12,&1/8,&5/24,&1/4,&7/24,&5/12,&11/24,&5/8.
  }\]
Notice that $\alpha_{1}=-5/8$, $\alpha_{19}=5/8$ and, $\alpha_{i}\neq\alpha_{j}$, $i\neq j$ since $\alpha_{i}$ has multiplicity $d_{\alpha_{i}}=1$ and $\alpha_{i}<\alpha_{i+1}$, $\forall i=1,\ldots,18$.
In this example, $n=1$, hence the Saito-Hertling basis $\underline{h}$ for Brieskorne lattice $\Hpp$ is given by
\begin{align}
h_{1}&=s_{1}+\sum^{19}_{j=17}c^{(1)}_{1j} \;\dt s_{j},&& h_{2}= s_{2}+c^{(1)}_{2,19}\; \dt s_{19},&& h_{3}= s_{3}+c^{(1)}_{3,19}\; \dt s_{19},&&h_{j}= s_{j},\quad 4\leq j\leq 19.\nonumber
\end{align}
and one obtains a $\mathbb{C}$-basis $[\underline{h}]$ for $\Hpp/\dt^{-1}\Hpp$ so that $M_f$ in Corollary \ref{eq:matrixMf} has the matrix form:
\[[M_f]_{[\underline{\eta}]}=N_1\;\,(\textrm{since $N\equiv 0$}),\]
and
\begin{equation}\label{eqnExample1}
N_1=\begin{pmatrix}
 0&0& 0&0& \cdots&0\\
\hdotsfor[2]{6}\\
0&0& 0&0& \cdots&0\\
 \frac{1}{24}c^{(1)}_{1,17} &0&0&0&\cdots&0\\
  &&&&&\\
 \frac{2}{24}c^{(1)}_{1,18} &0 &0 & 0 & \cdots&0\\
  &&&&&\\
\frac{6}{24}c^{(1)}_{1,19}&\frac{2}{24}\,c^{(1)}_{1,18}&\frac{1}{24}\,c^{(1)}_{1,17}&0&\cdots&0\\
\end{pmatrix}_{19\times 19}
\end{equation}
which is symmetric with respect to the antidiagonal. The Tjurina number of $f$ is $\tau=17$, hence the rank of matrix (\ref{eqnExample1}) is $\mu-\tau=2$. Therefore at least one of the constants $c^{(1)}_{1,18}$ or $c^{(1)}_{1,17}$ is not equal to zero. 
Since the multiplicity of $\alpha_{10}=0=(n-1)/2$ is $d_{0}=1$, the $[\underline{\eta}]$-matrix expression for $res_{f,0}$ is
 \[
 [res_{f,0}]_{[\underline{\eta}]}= \begin{pmatrix}
  0&0&\dots&0&1\\
0&0&\dots&1&0\\
\hdotsfor[2]{5}\\
0&1&\dots&0&0\\
1&0&\dots&0&0
\end{pmatrix}_{19\times 19}
\]
 Finally, the $[\underline{\eta}]$-matrix for $res_{f,0}(f\bullet,\bullet):\Omega_f\times\Omega_f\lrar\mathbb{C}$ is
\begin{equation}\label{eqnExample1-2}
[res_{f,0}(M_f\bullet,\bullet)]_{[\underline{\eta}]}=\begin{pmatrix}
\frac{6}{24}c^{(1)}_{1,19}&\frac{2}{24}\,c^{(1)}_{1,18}&\frac{1}{24}\,c^{(1)}_{1,17}&0&\cdots&0\\
 &&&&&\\
 \frac{2}{24}c^{(1)}_{1,18} &0 &0 & 0 & \cdots&0\\
 &&&&&\\
 \frac{1}{24}c^{(1)}_{1,17} &0&0&0&\cdots&0\\
 0&0& 0&0& \cdots&0\\
\hdotsfor[2]{6}\\
0&0& 0&0& \cdots&0\\
\end{pmatrix}_{19\times 19}
\end{equation}
which is symmetric and has rank $2$. 
Since $n=1$, the involution $\kappa$ in \eqref{eqn:matrixQ} is given by
\[\kappa:\{1,\ldots,19\}\lrar \{1,\ldots,19\}\]
  \begin{displaymath}
k(j) = \left\{\begin{array}{ll}
20-j, & \textrm{if $\alpha_{j}\neq (n-1)/2=0$,}\\
        &  \\
10, & \textrm{if $\alpha_{10}=(n-1)/2=0$}.
\end{array} \right.
\end{displaymath}
More over the levels of each $s_{j}$ in the V-filtration flag are
\[r_{j}=\begin{cases}
1, \textrm{ for } 1/24,1/12,1/8,5/24,1/4,7/24,5/12,11/24,5/8,\\
0, \textrm{ for } -5/8,-11/24,-5/12,-7/24,-1/4,-5/24,-1/8,-1/12,-1/24,0
\end{cases}
\]
and the level for each $A_j$ in the Hodge flag is
\[p(j)=p(\alpha_{j})=\begin{cases}
0, \textrm{ for } 1/24,1/12,1/8,5/24,1/4,7/24,5/12,11/24,5/8\\
1, \textrm{ for } -5/8,-11/24,-5/12,-7/24,-1/4,-5/24,-1/8,-1/12,-1/24,0.
\end{cases}
\]
Hence, if we arrange the basis $\{A_j\}$ first integers and then the rest, 
\[S
=\begin{pmatrix}
1&0\\
0&\begin{pmatrix}
0&0&0&1\\
0&0&1&0\\
\hdotsfor[2]{4}\\
0&1&0&0\\
1&0&0&0
\end{pmatrix}_{18 \times 18}
\end{pmatrix}_{19\times 19}
\]
and
\[J=\frac{1}{2\pi i}\left(
  \begin{array}{ccccccccccc}
     -(\frac{1}{2\pi i})1 &&&&&&&&&&\\
    &1&0&&&&&&0&&\\
    &0&-1&&&&&&&&\\
    &&&1&0&&&&&&\\
    &&&0&-1&&&&&&\\
    &&&&&&\ddots&&&& \\
    &&&&&&&1&0&&\\
    &0&&&&&&0&-1&&\\
    &&&&&&&&&1&0 \\
    &&&&&&&&&0&-1 \\
  \end{array}
\right)=\begin{pmatrix}
\begin{pmatrix}
\frac{1}{(2\pi i)^{2}}(-1)
\end{pmatrix}&0\\
0&\frac{1}{(2\pi i)}\begin{pmatrix}
I&0\\
0&-I
\end{pmatrix}
\end{pmatrix}
\]
Hence,
\[Q=(2\pi i)^{1}\left(
  \begin{array}{ccccccccccc}
-(2\pi i)1&&&&&&&&&&\\
             & 0&-1&&&&&&0&&\\
    		&1&0&&&&&&&&\\
    		&&&0&-1&&&&&&\\
    		&&&1&0&&&&&&\\
    		&&&&&&\ddots&&&&\\
    		&&&&&&&0&-1&&\\
    		&&&0&&&&1&0&&\\
    		&&&&&&&&&0&-1\\
    		&&&&&&&&&1&0\\
  \end{array}
\right)=\begin{pmatrix}
\begin{pmatrix}
(2\pi i)^{2}(-1)
\end{pmatrix}&0\\
0&(2\pi i)\begin{pmatrix}
0&-I\\
I&0
\end{pmatrix}
\end{pmatrix}
\]
It verifies that $S=QJ$. Since $N\equiv 0$, the bilinear form $Q(N\bullet,J\bullet)$ is trivial and we verify the formula $res_{f,0}(M_f\bullet,\bullet)=Q(\mathbf{N}_1,J)$. 
\end{Ex}

\begin{Ex}
Consider the germ of Isolated Hypersurface Singularity given by the semi-quasi-homogeneous polynomial $f=x^{4}+y^{5}+xy^{4}$, for which we can check (using singular \cite{MSch1} with command {\bf monodromy }) that $N\equiv 0$. And that the corresponding Milnor and Tjurina number are $\mu=12$ and $\tau=11$; hence the rank of $M_f$ is $rank(M_f)=1$ and $M_f\neq 0$. Since $n=1$, $M^{2}_f\equiv 0$; we also may compute
the spectrum $sp(f)$:
  \[\xymatrix@R=1pc@C=.01pc{\alpha_1=-11/20,&-7/20,&-3/10,&-3/20,&-1/10,&-1/20,&\\
  1/20,&1/10,&3/20,&3/10&7/20,&\alpha_{12}= 11/20.
  }\]
Notice that each $\alpha_{i}$ has multiplicity $d_{\alpha_{i}}=1$ and $\alpha_{i}<\alpha_{i+1}\;\;\forall i=1,\ldots,11$.
The Saito-Hertling basis $\underline{h}$ for Brieskorne lattice $\Hpp$ is given by
\begin{align}
h_{1}&=s_{1}+c^{(1)}_{1,12} \;\dt s_{12},\hspace{2 cm} h_{j}= s_{j},\quad 2\leq j\leq 12.\nonumber
\end{align}
In the corresponding $\mathbb{C}$-basis $[\underline{h}]$ for $\Hpp/\dt^{-1}\Hpp$, $M_f$ has the matrix form: 
\[
[M_f]_{[\underline{\eta}]}=N_1=\begin{pmatrix}
 0&0& 0&0& \cdots&0\\
\hdotsfor[2]{6}\\
0&0& 0&0& \cdots&0\\
 0 &0&0&0&\cdots&0\\
0 &0 &0 & 0 & \cdots&0\\
\frac{1}{10} c^{(1)}_{1,12}&0&0&0&\cdots&0\\
\end{pmatrix}_{12\times 12}
\]
the $[\underline{\eta}]$-matrix expression for $res_{f,0}$ is
 \[
 [res_{f,0}]_{[\underline{\eta}]}= \begin{pmatrix}
  0&0&\dots&0&1\\
0&0&\dots&1&0\\
\hdotsfor[2]{5}\\
0&1&\dots&0&0\\
1&0&\dots&0&0
\end{pmatrix}_{12\times 12}
\]
 Finally, the $[\underline{\eta}]$-matrix for $res_{f,0}(M_f\bullet,\bullet):\Omega_f\times\Omega_f\lrar\mathbb{C}$ is
\[
[res_{f,0}(M_f\bullet,\bullet)]_{[\underline{\eta}]}=\begin{pmatrix}
\frac{1}{10}c^{(1)}_{1,12}&&0&0&\cdots&0\\
 0&& 0&0& \cdots&0\\
\hdotsfor[2]{6}\\
0&& 0&0& \cdots&0\\
\end{pmatrix}_{12\times 12}
\]
which is symmetric and has rank $1$. 
Since $n=1$ and $\alpha\neq (n-1)/2=0$ for all $\alpha \in sp(f)$, the involution $\kappa:\{1,\ldots,12\}\lrar \{1,\ldots,12\}$ is given by $k(j) = 13-j$.

More over the levels of each $s_{j}$ in the V-filtration flag are
\[r_{j}=\begin{cases}
1, \textrm{ for }   1/20,1/10,3/20,3/10,7/20,11/20\\
0, \textrm{ for } -11/20,-7/20,-3/10,-3/20,-1/10,-1/20
\end{cases}
\]
and the level for each $A_j$ in the Hodge flag is
\[p(j)=p(\alpha_{j})=\begin{cases}
0, \textrm{ for }   1/20,1/10,3/20,3/10,7/20,11/20\\
1, \textrm{ for } -11/20,-7/20,-3/10,-3/20,-1/10,-1/20
\end{cases}
\]
Notice that  for any $\alpha\in sp(f)$, $\alpha\neq 0$. Hence, $H^{1}(X_{\infty},\mathbb{C})=H^{1}(X_{\infty},\mathbb{C})_{\neq 1}$; up to arranging basis $\{A_j\}$, 
\[S= \begin{pmatrix}
  0&0&\dots&0&1\\
0&0&\dots&1&0\\
\hdotsfor[2]{5}\\
0&1&\dots&0&0\\
1&0&\dots&0&0
\end{pmatrix}_{12\times 12}
\]
and
\[J=\bigg(\frac{1}{2\pi i}\bigg)\left(
  \begin{array}{ccccccccccc}
    1&0&&&&&&0&&\\
    0&-1&&&&&&&&\\
    &&1&0&&&&&&\\
    &&0&-1&&&&&&\\
    &&&&&\ddots&&&& \\
    &&&&&&1&0&&\\
    0&&&&&&0&-1&&\\
    &&&&&&&&1&0 \\
    &&&&&&&&0&-1 \\
  \end{array}
\right)=\bigg(\frac{1}{2\pi i}\bigg)\begin{pmatrix}
I&0\\
0&-I
\end{pmatrix}
\]
Hence,
\[
Q=(2\pi i)\begin{pmatrix}
0&-1&&&&&&0&&\\
1&0\\
&&0&-1\\
&&1&0&\\
&&&&&\ddots\\
&&&&&&0&-1\\
&&0&&&&1&0\\
&&&&&&&&0&-1\\
&&&&&&&&1&0\\
\end{pmatrix}=(2\pi i)\begin{pmatrix}
0&-I\\
I&0
\end{pmatrix}
\]
It verifies that $S=QJ$ and $res_{f,0}(M_f\bullet,\bullet)=Q(\mathbf{N}_1\bullet,J\bullet)$ since the bilinear form $Q(N\bullet,J\bullet)$ is trivial.
\end{Ex}

\begin{Ex}
  Let $f:(\mathbb{C}^{2},0)\lrar(\mathbb{C},0)$ be the germ of Isolated Hypersurface Singularity given by the not semi-quasi-homogeneous polynomial $f=x^{5}+y^{5}+x^{2}y^{2}$. The Milnor and Tjurina numbers are $\mu= 11$ and $\tau=10$; hence the rank of  $M_f$ is $rk(M_f)=1$ and $M_f\neq 0$, but $M^{2}_f\equiv 0$ since $n=1$.  It can be checked (using Singular \cite{MSch2} with command {\bf monodromy}) that $f$ has a non finite monodromy, more over, $N:H^{1}(X_{\infty},\mathbb{C})\lrar H^{1}(X_{\infty},\mathbb{C})$ has a Jordan block of size $2\times 2$ corresponding to eigenvalue $\lambda=-1$, and then $N\neq 0$, but $N^{2}\equiv 0$ by the Monodromy Theorem. The spectrum $sp(f)$ is:
  \[\xymatrix@R=1pc@C=.01pc{(\alpha,d_{\alpha}):(-1/2,1),(-3/10,2),(-1/10,2),(0,1),(1/10,2),(3/10,2),(1/2,1).\\
  }\]
Here $\alpha_{1}=-1/2$ and $\alpha_{11}=1/2$. Each $\alpha_{i}\neq-1/2,1/2$ has multiplicity $d_{\alpha_{i}}=2$ and $d_{\alpha_{11}}=d_{\alpha_{1}}=1$. Notice that $\alpha_{1}+1=\alpha_{11}$ and $\alpha_{i}+1\notin sp(f)\subset(-1,1)$. Hence, the Saito-Hertling basis for Brieskorn lattice $\Hpp$ is such that
 \begin{align*}
 &h_{i}=s_{i}\,, \;1\leq i\leq \mu,&\\
&h_{11}=h_{\nu(1)}=s_{\nu(1)}\neq 0,\hskip 1cm h_{\nu(i)}=0,\;\; 2\leq i\leq 11,&\\
&th_{i}=(\alpha_{i}+1)\dt^{-1}h_{i}+h_{\nu(i)}, 1\leq i \leq 11,&\\
&[th_{i}] = \left\{ \begin{array}{ll}
[h_{\nu(1)}]=[h_{11}]\in\Hpp/\dt^{-1}\Hpp\\
& \\
0,\textrm{if $i\neq 1$}\,.
\end{array} \right.
\end{align*}
Hence, the  $[\underline{\omega}]$-matrix for $M_f$ is given by
 \[
 \big[M_f\big]_{[\underline{\eta}]}= \begin{pmatrix}
0&0&\dots&0\\
\hdotsfor[2]{4}\\
0&0&\dots&0\\
1&0&\dots&0
\end{pmatrix}_{11\times 11}=[N]_{\underline{A}}
\]
Here $N_1\equiv 0$ and one has the trivial bilinear form $N^{tr}_1SJ=0$.
The involution $\kappa_f$ is given by
\[\kappa_f: \{1,\ldots,11\}\lrar \{1,\ldots,11\},\hskip 1 cm \kappa_{f}(i)=12-i.\]
and
 \[
 [res_{f,0}]_{[\underline{\eta}]}= \begin{pmatrix}
  0&0&\dots&0&1\\
0&0&\dots&1&0\\
\hdotsfor[2]{5}\\
0&1&\dots&0&0\\
1&0&\dots&0&0
\end{pmatrix}_{11\times 11}
\]
since the multiplicity of $\alpha_{6}=0=(n-1)/2$ is $d_{0}=1$.

On the other hand, the levels of each $s_{j}$ in the V-filtration flag are
\[r_{j}=\begin{cases}
1, \textrm{ for } (1/10,2),(3/10,2),(1/2,1)\\
0, \textrm{ for } (-1/2,1),(-3/10,2),(-1/10,2),(0,1)
\end{cases}
\]
and the level for each $A_j$ in the Hodge flag is
\[p(j)=p(\alpha_{j})=\begin{cases}
0, \textrm{ for } (1/10,2),(3/10,2),(1/2,1)\\
1, \textrm{ for } (-1/2,1),(-3/10,2),(-1/10,2),(0,1)
\end{cases}
\]
Hence, if we arrange the basis $\{A_j\}$ first integers and then the rest, 
\[S
=\begin{pmatrix}
1&0\\
0&\begin{pmatrix}
0&0&0&1\\
0&0&1&0\\
\hdotsfor[2]{4}\\
0&1&0&0\\
1&0&0&0
\end{pmatrix}_{10 \times 10}
\end{pmatrix}_{11\times 11}
\]
and
\[J=\frac{1}{2\pi i}\left(
  \begin{array}{ccccccccccc}
     -(\frac{1}{2\pi i})1 &&&&&&&&&&\\
    &1&0&&&&&&0&&\\
    &0&-1&&&&&&&&\\
    &&&1&0&&&&&&\\
    &&&0&-1&&&&&&\\
    &&&&&&\ddots&&&& \\
    &&&&&&&1&0&&\\
    &0&&&&&&0&-1&&\\
    &&&&&&&&&1&0 \\
    &&&&&&&&&0&-1 \\
  \end{array}
\right)=\begin{pmatrix}
\begin{pmatrix}
\frac{1}{(2\pi i)^{2}}(-1)
\end{pmatrix}&0\\
0&\frac{1}{(2\pi i)}\begin{pmatrix}
I&0\\
0&-I
\end{pmatrix}
\end{pmatrix}
\]
Hence,
\[Q=(2\pi i)^{1}\left(
  \begin{array}{ccccccccccc}
-(2\pi i)1&&&&&&&&&&\\
             & 0&-1&&&&&&0&&\\
    		&1&0&&&&&&&&\\
    		&&&0&-1&&&&&&\\
    		&&&1&0&&&&&&\\
    		&&&&&&\ddots&&&&\\
    		&&&&&&&0&-1&&\\
    		&&&0&&&&1&0&&\\
    		&&&&&&&&&0&-1\\
    		&&&&&&&&&1&0\\
  \end{array}
\right)=\begin{pmatrix}
\begin{pmatrix}
(2\pi i)^{2}(-1)
\end{pmatrix}&0\\
0&(2\pi i)\begin{pmatrix}
0&-I\\
I&0
\end{pmatrix}
\end{pmatrix}
\]
It verifies that $S=QJ$ and $res_{f,0}(M_f\bullet,\bullet)=Q(N\bullet ,J\bullet)$ since the bilinear form $Q(\mathbf{N}_1,J)$ is trivial.
\end{Ex}

\begin{Ex}
We will consider the M. Saito example as in~\cite[p.18 ]{MSaito2} which was stated by using the Tom-Sebastiani type theorem (\cite{SherkSteen}). Set
\[f=g+g^{\prime} \textrm{ with } g=x^{10}+y^{3}+x^{2}y^{2},\;g^{\prime}=z^{6}+w^{5}+z^{4}w^{3}.
\]
Using Singular~\cite{MSch1} we may compute that Milnor and Tjurina numbers are $\mu=280$ and $\tau=248$, and the spectrum $sp(f)$ is:
\begin{align*}
(\alpha,d_{\alpha}):
(-2/15,1),(-1/30,1),\\
(1/30,1),(1/15,2),(2/15,1),(1/6,2),(1/5,2),(7/30,2),(4/15,3),(3/10,1),(1/3,2),(11/30,6),\\
(2/5,3),(13/30,3),(7/15,5),(1/2,2),(8/15,7),(17/30,8),(3/5,4),(19/30,5),(2/3,6),(7/10,6),\\
(11/15,9),(23/30,9),(4/5,5),(5/6,6),(13/15,10),(9/10,7),(14/15,10),(29/30,9),\\
(1,4),(31/30,9),(16/15,10),(11/10,7),(17/15,10),(7/6,6),(6/5,5),(37/30,9),(19/15,9),\\
(13/10,6),(4/3,6),(41/30,5),(7/5,4),(43/30,8),(22/15,7),(3/2,2),(23/15,5),(47/30,3),(8/5,3),\\
(49/30,6),(5/3,2),(17/10,1),(26/15,3),(53/30,2),(9/5,2),(11/6,2),(28/15,1),(29/15,2),(59/30,1),\\
(61/30,1),(32/15,1)
\end{align*}
The rank of $M_f$ is $rk\;M_f=32$; with Singular \cite{MSch1,MSch2} (with command {\bf jacoblift }) it can be checked that $M_f\neq 0$, $M_f^{2}\neq 0$, but $M_f^{3}\equiv 0$; and (with command {\bf monodromy}) one obtains that $N$ only has Jordan blocks of size $1\times 1$ and several Jordan bocks of (maximal) size $2\times 2$; hence $N\neq 0$, $N^{2}\neq 0$, but $N^{3}\equiv 0$. Our interest in the present example is the following M. Saito result~\cite{MSaito2}. Let $\Hpp(f),\Hpp(g),\Hpp(g^{\prime})$ be the Brieskorn lattices of $f,g,g^{\prime}$, respectively. Hence, there are canonical isomorphisms (cf. \cite{SherkSteen})
\[\Hpp(f)\simeq\Hpp(g)\otimes_{R}\Hpp(g^{\prime}),\hskip 1cm \frac{\Hpp(f)}{\dt ^{-1}\Hpp(f)}\simeq\frac{\Hpp(g)}{\dt ^{-1}\Hpp(g)}\otimes_{\mathbb{C}}\frac{\Hpp(g^{\prime})}{\dt ^{-1}\Hpp(g^{\prime})}, \]
($R:=\mathbb{C}\{\{\dt ^{-1}\}\}$ is the ring of micro-differential operators with constant coefficients) such that the action of $t$ on the left hand side is identified with $t\otimes Id+Id\otimes t$. There is a subspace  of rank $ 6$, denoted by $\Hpp(f)^{\prime}\subset\Hpp(f)$, such that the $[\underline{h}]$-matix of $t$ on the corresponding class subset $\overline{\Hpp(f)^{\prime}}\subset \Hpp(f)/\dt ^{-1}\Hpp(f)$ is given by
\[\big[t\big]_{|[\underline{h}]}=\begin{pmatrix}
0&0&0&0&0&0\\
0&0&0&0&0&0\\
1&0&0&0&0&0\\
a&0&0&0&0&0\\
0&0&0&0&0&0\\
0&0&a&1&0&0\\
  \end{pmatrix}=[N]+N_1\]
with $a\in\mathbb{C}\setminus\{0\}$. As a consequence, the $[\underline{\eta}]$-matrix for $M_f$ is also $[N]+N_1$. Notice that if we restrict to $\ker(N)$, the bilinear form $Q(\mathbf{N}_1,J)$ is non trivial on $\varphi^{-1}(\ker(N))\cap (\overline{\Hpp(f)^{\prime}})$.
\end{Ex}

\section{conclusions}

Making use of the Saito-Hertling basis which is constructed from the Deligne spliting associated to the Steenbrink-Hertling Polarized Mixed Hodge Structure with respect to  polarization $Q$, we may choose a Jordan basis for vanishing canonical cohomology fiber $H^{n}(X_{\infty},\mathbb{C})$ adapted to the weight filtration $W(N)$, in such a way that we can describe a normal form for the bilinear forms and maps:
\begin{itemize}
	\item Endomorphisms on $H^{n}(X_{\infty},\mathbb{C})$: $N$ and $\mathbf{N}_1$;
	\item Map muiltiplication by $f$ on $\Omega^f$: $M_f$;
	\item The isomophism $\varphi$ from $\Omega^f$ to $H^{n}(X_{\infty},\mathbb{C})$;
	\item The automorphism on $H^{n}(X_{\infty},\mathbb{C})$: $J$;
	\item Polarization on $H^{n}(X_{\infty},\mathbb{C})$: $Q(\bullet,\bullet)$;
	\item The Lefschetz bilinear forms with topological setting on $H^{n}(X_{\infty},\mathbb{C})$: $Q(N^{\ell}\bullet,\bullet), \ \  0\leq \ell \leq n$;
	\item The topological weighted bilinear forms on $H^{n}(X_{\infty},\mathbb{C})$: $Q(N^{\ell}\bullet,J\bullet),\ \ 0 \leq \ell \leq n$;
	\item The algebraic bilinear forms on $H^{n}(X_{\infty},\mathbb{C})$: $Q(\mathbf{N}_1^{\ell}\bullet,J\bullet),\ \ 0 \leq \ell \leq n$;
	\item The Grothendieck paring: $res_{f,0}(\bullet,\bullet)$;
	\item The higher bilinear forms on the Jacobian module $\Omega_f$: $res_{f,0}(M_f^{j}\bullet,\bullet),\ \ 0\leq j\leq n$.
\end{itemize}
Our main result given by Theorem \ref{thm:firstpartthm}, uses these normal forms to relate the higher bilinear forms in $\Omega_f$ to the weighted topological and algebraic higher bilinear forms on $H^{n}(X_{\infty},\mathbb{C})$.
Essentially, these results are produced using the normal form that the Grothendieck pairing inherits from the Saito-Hertling basis for the Brieskorn lattice. Hence we can note that, the bilinear forms in $\Omega_f$ have more information than the weighted topological which is detected assuming that the germ has finite monodromy. The interesting part arises from the fact that such weights are determined by the authomorphism $J$ which is constructed from the Hodge filtration by means of the spectrum of singularity germ $f$. 

Besides, our results also state that the Jordan chains of $M_f$ can be obtained as a binding of topological $N$-spectral chains, without taking the grading that the $V$-filtration produces.

The interest in the analysis of the bilinear forms in the Milnor algebra (or the Jacobian module) arises from the attempt of the second author and collaborators \cite{Giraldo-GM} for the understanding of it geometrical meaning, since there is a relationship between the signature of these higher bilinear forms to indices of vector fields whenever the germ $f$ also is real analytic.  

Finally, it is worth mentioning that in \cite{DelaRosa2018} using a weaker approach and without using the Saito-Hertling basis, the author shows that  the bilinear forms $res_f(f^j\bullet ,\bullet)$ have an additive expansion in terms of the bilinear forms $Q(N^j\bullet ,\bullet)$. Such additive expansions depend only on the asymptotic expansions for elements on the Jacobian module, which are induced by the $V$-filtration.  
 


\begin{thebibliography}{110}
\bibitem{BrSkoda}  Brian\c{c}on, J.     Skoda, H, \emph{Sur la cloture integrable d\'un ideale de germes de fonctions holomorphes en un point de $\mathbb{C}^{n}$}, C. R. Acad. Sci (Paris) (1974), 278--949.

\bibitem{Bries}  Brieskorn, E., \emph{La monodromie des singularit\'es isol\'ees d\'hypersurfaces}, Manuscripta Math. 2 (1970), 103--161.

\bibitem{singular}  Decker, W.,   Greuel, G-M.,  Pfister G.,  Sch\"{o}nemann, H., \emph{Singular 4.0.2 A computer Algebra System for Polynomial Computations}, Center for Computer Algebra, University of Kaiserslautern. \newblock {http://www.singular.uni-kl.de} (2015).

\bibitem{DelaRosa2018} Dela-Rosa, M. A., \emph{On a lemma of Varchenko and higher bilinear forms induced by Grothendieck duality on the Milnor algebra of an isolated hypersurface singularity}. Bull. Braz. Math. Soc. (N.S.) 49(4) (2018), 715–741.


\bibitem{EL}  Eisenbud, D., Levine, H. I., Teissier, B., \emph{An Algebraic Formula for the Degree of a $C^\infty$ Map Germ / Sur Une Inégalité à La Minkowski Pour Les Multiplicités}. Ann. of Math. 106(1) (1977), 19-44.

\bibitem{Giraldo-GM} Giraldo,  L.,   G\'omez-Mont, X.,  Marde\v{s}i\'c, P., \emph{Flags in zero dimensional complete
intersection algebras and indices of real vector fields}, Math. Z. 260 (2008), no. 1, 77-91.

\bibitem{grifharis}  Griffiths, P.,   Harris, J., \emph{Principles of Algebraic Geometry}, Wiley (1978).

\bibitem{G-Sch}  Griffiths, P., Schmid, W.,  \emph{Recent Developments in Hodge Theory: A Discussion of Techniques and Results}, Discrete subgroups of Lie groups and applications to moduli, Oxford Univ. Press, Bombay, (1975), 31--127.


\bibitem{hertl1}  Hertling, C., \emph{Formes bilin\'eaires et hermitiennes pour des singularit\'es: un aper\c{c}u}. In: Singularit\'es (ed. D. Barlet), Institut \'Elie Cartan Nancy 18 (2005).


\bibitem{hertl2}  Hertling, C., \emph{Frobenius manifolds and moduli spaces for singularities}, Cambridge Tracts in Math. 151 (2004).

\bibitem{hertl3} Hertling, C., \emph{Classifying Spaces for Polarized Mixed Hodge Structures and Brieskorne Latti\-ces}, Compositio Mathematica. 116 (1999), 1-37.
\bibitem{hertl4}  Hertling, C. and  Stahlke, C., \emph{Bernstein Polynomial and Tjurina Number}, Geometriae Dedicata. 75 (1999), 137-176.

\bibitem{kulikov} Kulikov, Va. S., \emph{Mixed Hodge Structures and Singularities}, Cambridge Tracts in Math. 132 (1998).

	\bibitem{looijenga}  Looijenga E., \emph{Isolated singular points on complete intersections}. Cambridge University Press, Vol. 77 (1984).


\bibitem{leray} Leray, J., \emph{Le calcul diff\'erentiel et int\'egral sur une vari\'et\'e analytique complexe (Probleme de Cauchy, III)}, Bull. Soc. Math. France (1959), 81-180.


\bibitem{malgrange} Malgrange, B.,  \emph{Integrales Asymptotiques et Monodromie}, Ann. Scient. \'Ec.Norm. Sup. $4^{e}$ s\'erie, t.7 (1974), 405-430.
	
\bibitem{milnor}  Milnor, J., \emph{Singular Points on Complex Hypersurfaces}, Ann. Math.,
Stud. Princ., Univ. Press, Vol. 61 (1968).

\bibitem{Ksaito} Saito, K., \emph{The higher residue pairings $K_{F}^{(k)}$
	for a family of hypersurface singular points}, Proceedings
	of Symposia in Pure Mathematics Vol. 40 (1983), part 2, 441-463
	\bibitem{MSaito2} Saito, M., On the structure of Brieskorn lattices, II.	Journal of Singularities, volume 18 (2018), 248-271.
	
	\bibitem{MSaito} Saito, M., \emph{On the Structure of Brieskorn Lattices}, Ann. Inst. Fourier Grenoble. 39 (1989), 27-72.

\bibitem{schmid}  Schmid, W. \emph{Variation of Hodge Structure: the singularities of the period mapping}. Invent. Math. 22 (1973), 211-319

 \bibitem{MSch1} 
Schulze, M: 
{\tt gmssing.lib}. {A} {\sc Singular} {4-0-2} library for computing invariants related to the Gauss-Manin system of an isolated hypersurface singularity. (2015).

\bibitem{MSch2} 
Schulze, M: 
{\tt mondromy.lib}. {A} {\sc Singular} {4-0-2} library for computing the monodromy of an isolated hypersurface singularity. (2015).

\bibitem{steenbrink0}  Steenbrink, J. \emph{Mixed Hodge structure on the vanishing cohomology}, Nordic Summer School, Simposium in Mathematics, Oslo (1976), 525-563.

\bibitem{Sherk}  Scherk, J., \emph{On the Monodromy Theorem for Isolated Hypersurface Singularities}, Inventiones Math. 58 (1980), 289-301.

\bibitem{SherkSteen}  Scherk, J., Steenbrink, J., \emph{On the Mixed Hodge Structure on the Cohomology of the Milnor Fibre}, Math. Ann. 271 (1985), 641-665.

\bibitem{duco} van Straten, D., \emph{From Briançon-Skoda to Scherk-Varchenko}. Commutative Algebra and Noncommutative Algebraic Geometry 1 (2015), 347-370.

\bibitem{varch1}  Var\v{c}enko, A. \emph{On the local residue and the intersection form on the vanishing cohomology}, Math USSR Izvestiya, Vol. 26 (1986).

\bibitem{varch2}  Var\v{c}enko, A.  \emph{On the monodromy operator in vanishing cohomology and
the operator of multiplication by f in the local ring}, Soviet Math. Dokl., Vol. 24 (1981).

\bibitem{varch3}  Var\v{c}enko, A. \emph{The asimptotics of holomorphic forms determine a Mixed Hodge Structure}, Soviet Math. Dokl., Vol. 22 (1980), 772-775.
\end{thebibliography}
 \end{document}